\newtheorem{theorem}{Theorem}[section]
\newtheorem{lemma}[theorem]{Lemma}
\newtheorem{proposition}[theorem]{Proposition}
\newtheorem{example}[theorem]{Example}
\newtheorem{corollary}[theorem]{Corollary}
\newcommand{\Gal}{{\rm Gal}}
\newcommand{\Aut}{\mbox{\rm Aut}}
\newcommand{\Inn}{\mbox{\rm Inn}}
\newcommand{\End}{\mbox{\rm End}}
\newcommand{\inv}{^{-1}}
\newcommand{\Z}{{\mathbb Z}}
\newcommand{\Q}{{\mathbb Q}}
\newcommand{\C}{{\mathbb C}}
\newcommand{\F}{{\mathbb F}}
\newcommand{\GL}{{\rm GL}}
\newcommand{\PSL}{{\rm PSL}}
\newcommand{\tr}{{\rm tr}}
\newcommand{\sgn}{{\rm sgn}}
\newcommand{\Nr}{{\rm Nr}}
\newcommand{\Tr}{{\rm Tr}}
\newcommand{\Exp}{{\rm Exp}}
\newcommand{\matriz}[1]{\begin{array} #1 \end{array}}
\newcommand{\pmatriz}[1]{\left(\begin{array} #1 \end{array}\right)}
\newcommand{\GEN}[1]{\langle #1 \rangle}
\newcommand{\A}{{\mathcal A}}
\newcommand{\U}{\mathcal{U}}
\newcommand{\qand}{\quad \text{and} \quad}
\newcommand{\Supp}{{\rm Supp}}
\newcommand{\diag}{\operatorname{diag}}
\newcommand{\Cl}{\operatorname{Cl}}
\newcommand{\Hom}{\operatorname{Hom}}
\newcommand{\Aug}{\operatorname{Aug}}
\newcommand{\aug}{\operatorname{aug}}
\newcommand{\Irr}{\operatorname{Irr}}
\newcommand{\pa}[2]{\varepsilon_{#2}\left(#1\right)}
\newenvironment{proofof}{\par\bigskip\noindent{\bf Proof of }}{\qed\par\bigskip}
\date{\today}
\title{Finite groups in integral group rings}
\thanks{Partially supported by Ministerio de Economía y Competitividad project MTM2012-35240 and Fondos FEDER and Fundación Séneca of Murcia 04555/GERM/06.
\\
The first version of these notes was prepared for The School of Advances in Group Theory and Applications (AGTA) celebrated in Vietri sul Mare (Salerno, Italy) in June of 2016.
They were revised to be presented at the Vrije Universiteit of Brussels (VUB) to the students of the master course ``Non-commutative Algebra'' of the academic year 2017-18 while the author held the  ``VUB Leerstoel''.
It was revised again for two mini courses at the IME of the Universidade de São Paulo in September of 2019 and in the conference Group algebras, representations and computation at ICTS in Bangalore in October 2019, and at Algebra Seminar of the University of Murcia in November 2022.
The author wants to express his gratitude to the AGTA Group, the VUB, the USP and the ICTS for their invitation, support and hospitality.}
\author{Ángel del Río}
\address{Ángel del Río, Departamento de Matemáticas, Universidad de Murcia,
30100, Murcia, Spain}
\email{adelrio@um.es}
\begin{document}
\begin{abstract}
We revise some problems on the study of finite subgroups of the group of units of integral group rings of finite groups and some techniques to attack them.
\end{abstract}

\maketitle

The study of the group of units $\U(\Z G)$ of the integral group ring of a finite group $G$ was started by Higman in \cite{Higman1940Thesis} (see also \cite{Higman1940Paper}) and has been an active subject of research since.
Two basics references for this topic are the book of Sehgal \cite{Sehgal1993} and the two volumes book by Jespers and the author \cite{JespersdelRioGRG1,JespersdelRioGRG2}.
The aim of this note is to introduce the reader to the investigation of the finite subgroups of $\U(\Z G)$ and, in particular, of the torsion units in $\Z G$.
For a more advanced and updated treatment of the topic see \cite{MargolisdelRioAGTA}. 

\section{Basic notation}

All throughout $G$ is a finite group, denoted multiplicatively, and $Z(G)$ denotes the center of $G$. The order of a set $X$ is denoted $|X|$. We also use $|g|$ to denote the order of a torsion group element $g$.

Every ring $R$ is assumed to have an identity and its center, Jacobson radican and group of units are denoted $Z(R)$, $J(R)$ and $\U(R)$, respectively.
If $n$ is a positive integer then $M_n(R)$ denotes the ring of $n\times n$ matrices with entries in $R$ and $\GL_n(R)=\U(M_n(R))$, the group of units of $M_n(R)$.
If $M$ is an $R$-module then $\End_R(M)$ denotes the ring of endomorphisms of $M$ and $\Aut_R(M)$ denotes the group of automorphisms of $M$.
If $M$ is free of rank $n$ then there is a natural isomorphism $\End_R(M)\rightarrow M_n(R)$ associating every homomorphism with its expression in a fixed basis, which restricts to a group isomorphism $\Aut_k(M)\rightarrow \GL_n(R)$.
We will use these isomorphisms freely to identify endomorphisms and matrices.

The group ring of $G$ with coefficients in $R$ is denoted $RG$. It contains $R$ as a subring an its group of units contains $G$ as a subgroup which is also a basis of $RG$ as a left $R$-module. Moreover the elements of $R$ and $G$ commute. The group ring is characterized by the following property, which we refer as the \emph{Universal Property of Group Rings}: For every ring homomorphism $f:R\rightarrow S$ and every group homomorphism $\phi:G\rightarrow \U(S)$ such that $f(r)\phi(g)=\phi(g)f(r)$ for every $r\in R$ and every $g\in G$ there is a unique ring homomorphism $f'$ extending $f$ and $\phi$. In particular, if $S$ is a ring containing $R$ as subring then every group homomorphism $\phi:G\rightarrow \U(S)$ with image commuting with the elements of $R$ extends uniquely to a ring homomorphism $RG\rightarrow S$, which we will also denote $\phi$. 

We will abuse slightly the notation so that any time that we write $r=\sum_{g\in G} r_gg\in RG$ we are implicitly assuming that each $r_g$ belong to $R$. The \emph{support} of $r$ is 
	$$\Supp(r)= \{g \in G : r_g\ne 0\}.$$

\section{The Berman-Higman Theorem}

We start with a very useful result with many consequences on the finite subgroups of $\U(\Z G)$.

\begin{theorem}[Berman-Higman Theorem] \cite{Berman1955,Higman1940Thesis}\label{BermanHigman}
If $u=\sum_{g\in G} u_g g$ is a torsion unit of $\Z G$ then either $u=\pm 1$ or $u_1=0$.
\end{theorem}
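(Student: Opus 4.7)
The plan is to compute the trace of left multiplication by $u$ in the regular representation of $\mathbb{C}G$ in two different ways.

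First I would extend scalars and consider the left regular representation $\lambda \colon \mathbb{C}G \to \End_\mathbb{C}(\mathbb{C}G)$, where $\lambda(x)$ acts by left multiplication. Using $G$ as a $\mathbb{C}$-basis of $\mathbb{C}G$, the matrix of $\lambda(h)$ for $h\in G$ is a permutation matrix whose diagonal entries count fixed points of $g\mapsto hg$. This forces $\tr \lambda(h) = |G|$ if $h=1$ and $0$ otherwise. Extending by linearity to $u = \sum_{g\in G} u_g g$ gives the key identity
\[
\tr \lambda(u) = |G|\, u_1.
\]

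Next I would use that $u$ is a torsion unit: if $u^n = 1$ then $\lambda(u)^n = I$, so $\lambda(u)$ is diagonalisable over $\mathbb{C}$ with eigenvalues that are $n$-th roots of unity. Writing $\zeta_1,\dots,\zeta_{|G|}$ for these eigenvalues, the triangle inequality yields
\[
|G|\,|u_1| \;=\; \Bigl|\sum_{i=1}^{|G|} \zeta_i\Bigr| \;\le\; |G|,
\]
so $|u_1|\le 1$. Since $u_1\in\mathbb{Z}$, we get $u_1\in\{-1,0,1\}$, which already gives the theorem except in the two extremal cases $u_1=\pm 1$.

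To finish, I would analyse the equality case. If $|u_1|=1$ then equality holds in the triangle inequality, which forces all the $\zeta_i$ to be equal to a single root of unity $\zeta$. Since $\lambda(u)$ is diagonalisable, this means $\lambda(u)=\zeta\, I$, i.e.\ $\lambda(u-\zeta\cdot 1)=0$. The regular representation is faithful, so $u=\zeta$ as an element of $\mathbb{C}G$; but $u\in\mathbb{Z}G$ forces $\zeta\in\mathbb{Z}$, hence $\zeta=\pm 1$ and $u=\pm 1$. The only non-trivial point in the argument is recognising that one should compare the integrality of $u_1$ with a character-theoretic bound, and then handling the boundary case via diagonalisability of $\lambda(u)$; there is no serious technical obstacle beyond that.
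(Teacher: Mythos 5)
Your proof is correct and follows exactly the same strategy as the paper: use the regular representation, compute $\tr\lambda(u)=|G|u_1$, bound it by the triangle inequality using diagonalisability of a finite-order matrix, and dispatch the equality case by noting all eigenvalues coincide and $\lambda$ is faithful. No meaningful differences.
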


\begin{proof}
The key observation is that every complex invertible matrix of finite order is diagonalizable.
This is a consequence of the fact that an elementary Jordan matrix
  $$J_k(a) = \pmatriz{{ccccc} a & & & \\ 1 & a &  & \\ & \ddots & \ddots & \\ & & 1 & a &  \\ & & & 1 & a} \in M_k(\C).$$
is of finite order if and only if $k=1$ and $a$ is a root of unity.

Consider the regular representation, i.e. the group homomorphism $G\rightarrow \End_{\C}(\C G)$ associating $g\in G$ with the map $\rho(g):x\mapsto gx$.
Representing $\rho_g$ in the basis $G$, we deduce that if $n=|G|$ then the trace of $\rho(1)$ is $n$ and if $g\in G\setminus \{1\}$, then the trace of $\rho(g)$ is $0$.
Identifying $\End_{\C}(\C G)$ and $M_n(\C)$ we have a group homomorphism $\rho:G\rightarrow \U(M_n(\C))=\GL_n(\C)$.
By the Universal Property of Group Rings, $\rho$ extends to a $\C$-algebra homomorphism
$\rho:\C G\rightarrow  M_n(\C)$.

Suppose that $u=\sum_{g\in G} u_gg$ is a torsion unit of $\Z G$, say of order $m$.
Then $\rho(u)$ is  diagonalizable, so it is conjugate in $M_n(\C)$ to a diagonal matrix
$\diag(\xi_1,\dots,\xi_n)$, where each $\xi_i$ is a complex $m$-th root of unity.
As the trace map $\tr:M_n(\C)\rightarrow \C$ is $\C$-linear, we have
  $$n u_1 = \sum_{g\in G} u_g \tr(\rho(g)) = \tr(\rho(u)) = \tr(\diag(\xi_1,\dots,\xi_n)) = \sum_{i=1}^n \xi_i.$$
Taking absolute values we have
  $$n|u_1| \le \sum_{i=1}^n |\xi_i|=n$$
and equality holds if and only if all the $\xi_i$'s are equal. Thus, if not all the $\xi_i$'s are equal then $u_1$ is an integer with absolute value less
than $1$, i.e. $u_1=0$. Otherwise $\diag(\xi_1,\dots,\xi_n)=\xi I$, where $I$ denotes the identity matrix. As $\xi I$ is central we have $\rho(u)=\xi I$ and
$u_1=\xi$, an integral root of unity. Thus, $\xi=\pm 1$ and $\rho(u)=\pm I=\rho(\pm 1)$. As $\rho$ is injective on $\C G$, we deduce that $u=\pm 1$.
\end{proof}

The most obvious torsion units of $\Z G$ are the elements of the form $\pm g$ with $g\in G$.
They are called \emph{trivial units} of $\Z G$.

As a consequence of the Berman-Higman Theorem (Theorem~\ref{BermanHigman}), one can describe all the torsion central units.

\begin{corollary}\label{TorsionCentral}
The torsion central units of $\Z G$ are the trivial units $\pm g$ with $g\in Z(G)$. In particular, if $G$ is abelian then every finite subgroup of $\U(\Z G)$ is
contained in $\pm G$.
\end{corollary}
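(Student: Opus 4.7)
The plan is to reduce an arbitrary torsion central unit $u = \sum_{g\in G} u_g g$ to a torsion unit with a nonzero identity coefficient, then invoke the Berman--Higman Theorem to force $u$ to be a single term.

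First, pick any $g \in \Supp(u)$ and consider $v = g^{-1} u$. Since $u$ is central, $g^{-1}$ commutes with $u$, so for any $m$ with $u^m = 1$ we have $v^m = g^{-m} u^m = g^{-m}$, which is torsion; hence $v$ is a torsion unit of $\Z G$. Expanding $v = \sum_{h\in G} u_h (g^{-1} h)$ shows that the identity coefficient of $v$ equals $u_g$, which is nonzero by choice of $g$.

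By the Berman--Higman Theorem applied to $v$, we conclude $v = \pm 1$, i.e.\ $u = \pm g$. Since $u$ is central and $u = \pm g$, the element $g$ itself must be central, so $g \in Z(G)$. Conversely, every element of the form $\pm g$ with $g \in Z(G)$ is clearly a torsion central unit, which gives the first assertion.

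For the second statement, if $G$ is abelian then $\Z G$ is commutative, so every torsion unit of $\Z G$ is automatically a torsion central unit and thus lies in $\pm G$ by the first part. Any finite subgroup of $\U(\Z G)$ consists entirely of torsion units, hence is contained in $\pm G$. The only subtle point is checking that $v$ is indeed torsion when $u$ is, which only requires the centrality of $u$ to commute $g^{-1}$ past $u$; no real obstacle arises.
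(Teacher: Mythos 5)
Your proof is correct and follows essentially the same route as the paper: pick $g\in\Supp(u)$, form $v=g^{-1}u$ (the paper writes $ug^{-1}$, which is the same element since $u$ is central), observe that $1\in\Supp(v)$, and conclude $v=\pm 1$ by Berman--Higman. Your extra remarks justifying why $v$ is torsion and spelling out the abelian case are harmless elaborations of steps the paper leaves implicit.
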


\begin{proof}
Let $u$ be a torsion central unit of $\Z G$ and let $g\in \Supp(u)$.
Then $v=ug^{-1}$ is a torsion unit with $1\in \Supp(v)$.
By Theorem~\ref{BermanHigman},  $v=\pm 1$, and so $u=\pm g$.
\end{proof}

The proof of Theorem~\ref{BermanHigman} uses one of the main tools in the study of group rings, namely Representation Theory.
Let $R$ be a commutative ring and let $M$ be a left $RG$-module.
The map associating $g\in G$ to the $R$-endomorphism of $M$ given by $m\mapsto gm$ is a group homomorphism $G\mapsto \Aut_R(M)$. 
Conversely, if $M$ is an $R$-module then, by the Universal Property of Group Rings, every group homomorphism $G\rightarrow \Aut_R(M)$ extends to a ring homomorphism $RG\rightarrow \End_R(M)$ and this induces a structure of $RG$-module on $M$. 
Thus we can identify $RG$-modules with group homomorphism $G\rightarrow \End_R(M)$ with $M$ an $R$-module.

An $R$-\emph{representation} of $G$ of \emph{degree} $k$ is a group homomorphism $\rho:G\rightarrow \GL_k(R)$.
Our identification of $\End_R(R^k)$ and $M_k(R)$ allows to identify $\rho$ with the $RG$-module whose underlying $R$-module is $R^k$ and $gm=\rho(g)m$ for $g\in G$ and $m\in R^k$.
The composition of $\rho$ with the trace map $\tr:M_k(R)\rightarrow R$ is called \emph{the character} afforded by $\rho$, or by the underlying $RG$-module. 
Observe that both $\rho$ and the character afforded by $\rho$ are $R$-linear maps defined not only on $G$ but also on $RG$.

For example, the trivial map $G\rightarrow \U(R), g\mapsto 1$ is a character of degree $1$ and its linear span to $RG$ is called the \emph{augmentation map}:
  \begin{eqnarray*}
   \aug_G:RG & \rightarrow & R \\ \sum_{g\in G} r_g g & \mapsto & \sum_{g\in G} r_g.
  \end{eqnarray*}
The kernel $\Aug(RG)$ of $\aug_G$ is called the \emph{augmentation ideal} of $RG$.
As the augmentation map is a ring homomorphism it restricts to a group homomorphism
  $$\aug_G:\U(RG)  \rightarrow \U(R).$$
The kernel of this group homomorphism is denoted $V(RG)$, i.e.
	$$V(RG)=\{u\in \U(RG) : \aug_G(u)=1\}.$$
The elements of $V(RG)$ are usually called \emph{normalized units}.
If $R$ is commutative then $\U(RG)=\U(R)\times V(RG)$.
In particular, $\U(\Z G)=\pm V(\Z G)$ and hence the study $\U(\Z G)$ and $V(\Z G)$ are equivalent.

More generally, if $N$ is a normal subgroup of $G$ then the natural map $G\rightarrow G/N \subseteq \U(R (G/N))$ is an $R(G/N)$-representation of $G$ which
extends linearly to a ring homomorphism
  \begin{eqnarray*}
   \aug_{G,N}:RG & \rightarrow & R(G/N) \\ \sum_{g\in G} r_g g & \mapsto & \sum_{g\in G} r_g gN.
  \end{eqnarray*}
We set $\Aug_N(RG)=\ker(\aug_{G,N})$. The reader should prove:
  \begin{equation}\label{AugSubgroup}
  \Aug_N(RG) = RG \Aug(RN)= \Aug(RN)RG, \qand \Aug(RG) = \bigoplus_{g\in G\setminus \{1\}} R(g-1).
  \end{equation}
Observe that $\aug_G=\aug_{G,G}$ and hence $\Aug(RG)=\Aug_G(RG)$.
Moreover, if $N_1\subseteq N_2$ are normal subgroups of $G$ then $\aug_{N_2} = \Phi\circ \aug_{G/N_1,N_2/N_1} \circ \aug_{G,N_1}$, where $\Phi$ is the $R$-linear extension of the natural isomorphism $\frac{G/N_1}{N_2/N_1}\cong G/N_2$.
Hence $\Aug_{N_1}(RG)\subseteq \Aug_{N_2}(RG)$.
Furthermore, $\aug_{G,1}=1_{RG}$ and so $\Aug_1(RG)=0$.

If $N$ is a normal subgroup of $G$ then we also set
  $$V(RG,N) = \{u\in \U(RG) : \aug_{G,N}(u)=1.\}$$
Observe that $V(RG,G)=V(RG)$, $V(RG,1)=1$ and if $N_1\subseteq N_2$ are normal subgroup of $G$ then $V(RG,N_1)\subseteq V(RG,N_2)$.

One of the main questions on group rings is the so called Isomorphism Problem:

\bigskip
\noindent (ISO-$R$): \textbf{The Isomorphism Problem for group rings over a ring $R$}: 
\begin{center}
Does $RG\cong RH$ imply $G\cong H$?
\end{center}
\bigskip

(ISO) is an abbreviation of (ISO-$\Z$) and called the \textbf{Isomorphism Problem}. 
Observe that $RG \cong R\otimes_{\Z} \Z G$ and therefore if $\Z G\cong \Z H$ then $RG\cong RH$ for every ring $R$. Thus a negative solution for (ISO) is a negative solution for (ISO-$R$) for every ring $R$. 
More generaly, if there is a ring homomorphism $R\rightarrow S$ then we can see $S$ as an $R$-module and $S\otimes_R RG\cong SG$. Thus a positive solution for (ISO-$S$) implies a positive solution for (ISO-$R$).

It is easy to find negative solutions for the Isomorphism Problem for $R=\C$. For example, if $G$ is an abelian group then $\C G\cong \C^{|G|}$ and therefore if $H$ is another abelian group with $|G|=|H|$ then $\C G\cong \C H$.

Suppose that $G$ and $H$ are finite groups and let $f:\Z G \rightarrow \Z H$ be a ring homomorphism.
Then $f'(g)=\aug(f(g))f(g)$ is a group homomorphism and hence it extends to a ring homomorphism $f':\Z G \rightarrow \Z H$ such that $f'(G)\subseteq V(\Z H)$.
This shows that if $\Z G$ and $\Z H$ are isomorphic then there is an isomorphism $f:\Z G\rightarrow \Z H$ such that $f(G)$ is a subgroup of $V(\Z H)$ with the
same order as $H$.

\begin{corollary}
	The Isomorphism Problem holds for finite abelian groups.
\end{corollary}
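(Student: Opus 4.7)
The plan is to combine Corollary \ref{TorsionCentral} with the construction sketched in the paragraph immediately preceding the corollary, which turns any ring isomorphism $\Z G \to \Z H$ into one that sends $G$ into the group of normalized units $V(\Z H)$.

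First, I would reduce to the case where both groups are abelian and of the same order. If $\Z G \cong \Z H$ and $G$ is abelian, then $\Z G$ is commutative, hence so is $\Z H$, and therefore $H \subseteq \U(\Z H)$ is abelian as well. I would also note that $\Z G$ and $\Z H$ are free abelian groups of ranks $|G|$ and $|H|$ respectively, so that any ring isomorphism forces $|G| = |H|$.

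Next, I would invoke the preceding construction to produce a ring isomorphism $f : \Z G \to \Z H$ with $f(G) \subseteq V(\Z H)$ and $|f(G)| = |G|$ (the latter because $f$, being injective on $\Z G$, is injective on $G$). Since $H$ is abelian and $f(G)$ is a finite subgroup of $\U(\Z H)$, Corollary \ref{TorsionCentral} gives $f(G) \subseteq \pm H$. But an element $\pm h$ has augmentation $\pm 1$, so the only elements of $\pm H$ lying in $V(\Z H)$ are those in $H$ itself; hence $f(G) \subseteq H$. A cardinality count $|f(G)| = |G| = |H|$ then forces $f(G) = H$, and the restriction $f|_G : G \to H$ is the desired group isomorphism.

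The only real subtlety is justifying the preceding construction, namely that $f$ can actually be chosen to be an \emph{isomorphism} with $f(G) \subseteq V(\Z H)$, rather than merely a homomorphism; this relies on recognizing that the signs $\aug(f(g)) \in \{\pm 1\}$ assemble into a group homomorphism $G \to \{\pm 1\}$, which induces a ring automorphism of $\Z G$ one can compose with the original isomorphism to absorb the signs. Once this is in hand, the rest of the argument is a short chase using augmentation and order comparisons.
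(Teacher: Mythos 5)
Your proof is correct and follows essentially the same route as the paper: normalize the isomorphism so it preserves augmentation, then apply Corollary~\ref{TorsionCentral} to pin down the image of $G$ inside $V(\Z H)$. The only cosmetic difference is that the paper observes directly that the torsion units of $V(\Z G)$ and $V(\Z H)$ are exactly $G$ and $H$ (so $f$ restricts to a bijection between them), while you arrive at the same conclusion via the containment $f(G)\subseteq H$ plus a cardinality count; your remarks on why $H$ must be abelian, why $|G|=|H|$, and why the sign-correction yields an isomorphism are all sound and fill in details the paper leaves as ``why?'' or implicit.
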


\begin{proof}
Let $G$ and $H$ be finite groups and suppose that $G$ is abelian and suppose that $\Z G$ and $\Z H$ are isomorphic. Then necessarily $H$ is abelian (why?).
By the remark prior to the corollary, there is an isomorphism $f:\Z G\rightarrow \Z H$ which maps $V(\Z G)$ onto $V(\Z H)$. Moreover, by Corollary~\ref{TorsionCentral}, the set of torsion units of $V(\Z G)$ and $V(\Z H)$ are $G$ and $H$, respectively. 
Then $f$ restricts to an isomorphism $f:G\rightarrow H$.
\end{proof}

Another consequence of the Berman-Higman Theorem is the following:

\begin{corollary}\label{FiniteSubgroupIndependent}
Every finite subgroup of $V(\Z G)$ is linearly independent over $\Q$ (equivalently, over $\Z$).
\end{corollary}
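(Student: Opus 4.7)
The plan is to reduce the statement to a direct application of the Berman–Higman theorem together with a translation trick. First I observe that it suffices to prove $\Z$-linear independence: a $\Q$-linear relation can be cleared of denominators to produce an integer relation, and an integer relation $\sum a_h h = 0$ with not all $a_h$ zero can be rescaled back to rational coefficients, so the two notions coincide.

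Next I would unpack what Berman–Higman tells us about the individual elements of a finite subgroup $H\subseteq V(\Z G)$. Each $h\in H$ is a torsion unit, so by Theorem~\ref{BermanHigman} either $h=\pm 1$ or the coefficient $h_1$ of the identity in $h$ is zero. Because $\aug_G(h)=1$ for every $h\in V(\Z G)$, the case $h=-1$ is excluded. Hence, for each $h\in H$, we have the clean dichotomy
\[
h_1 = \begin{cases} 1, & h=1,\\ 0, & h\neq 1. \end{cases}
\]

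With this in hand, suppose toward a contradiction that $\sum_{h\in H} a_h h = 0$ in $\Z G$ with integer coefficients, not all zero. For a fixed $k\in H$, I would multiply the relation on the left by $k^{-1}$ to obtain $\sum_{h\in H} a_h (k^{-1}h)=0$. Since $H$ is a group, $h\mapsto k^{-1}h$ is a bijection of $H$, and $k^{-1}h=1$ precisely when $h=k$. Reading off the coefficient of $1\in G$ using the dichotomy above yields $a_k=0$. Since $k$ was arbitrary, all coefficients vanish, contradicting our assumption.

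I do not anticipate a serious obstacle: the only subtle point is recognizing that Berman–Higman applies to each element of $H$ individually (not to some combination), and that the augmentation constraint $\aug_G(h)=1$ kills the possibility $h=-1$ that would otherwise pollute the dichotomy. The left-multiplication by $k^{-1}$ is the standard device that lets a statement about the coefficient of $1$ propagate to a statement about every coefficient.
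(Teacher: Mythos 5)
Your proof is correct and follows essentially the same route as the paper: in both cases one takes an integer relation $\sum a_h h = 0$, translates by a group inverse so that Berman--Higman controls the coefficient of $1$, and concludes each coefficient vanishes. The only stylistic difference is that you multiply by $k^{-1}$ for every $k$ while the paper multiplies by $u_1^{-1}$ once and invokes symmetry; you also make explicit the minor point (left implicit in the paper) that the augmentation condition rules out the $h=-1$ case in the Berman--Higman dichotomy.
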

\begin{proof}
	Let $H=\{u_1,\dots,u_n\}$ be a finite subgroup of $V(\Z G)$ and suppose that
	$$c_1u_1+\dots+c_nu_n = 0$$
	with $c_i\in \Z$. Then
	$$c_1+c_2u_2u_1^{-1}+\dots+c_nu_nu_1^{-1}=0$$
	and each $u_iu_1^{-1}$, with $i=2,\dots,n$ is a torsion element of $V(\Z G)\setminus \{1\}$. By the Berman-Higman Theorem (Theorem~\ref{BermanHigman}),
	$1\not\in \Supp(u_iu_1^{-1})$ for every $i\ne 1$ and therefore, comparing the coefficients of $1$ in both sides of the previous equality, we deduce that $c_1=0$. This shows that each $c_i=0$.
\end{proof}

An obvious consequence of Corollary~\ref{FiniteSubgroupIndependent} is that if $H$ is a finite subgroup of $V(\Z G)$ then the subring $\Z[H]$ of $\Z G$ generated by $H$ is isomorphic to the group ring $\Z H$. 
Clearly $H$ is a basis of $\Z[H]$ over $\Z$.
Furthermore, if $|H|=|G|$ then $H$ is a basis of $\Q G$ over $\Q$.
Actually, by the following lemma, it is also a basis of $\Z G$ over $\Z$

\begin{corollary}\label{GroupBasis}
The following are equivalent for a finite subgroup $H$ of $V(\Z G)$:
\begin{enumerate}
	\item $|H|=|G|$.
	\item $\Z G=\Z[H]$.
	\item $H$ is an basis of $\Z G$ over $\Z$.
\end{enumerate}
\end{corollary}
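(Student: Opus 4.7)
By Corollary~\ref{FiniteSubgroupIndependent}, $H$ is $\Z$-linearly independent in $\Z G$, so $\Z[H]$ is a free $\Z$-module of rank $|H|$ with basis $H$. From this observation, the equivalence $(2)\Leftrightarrow(3)$ is immediate, and $(3)\Rightarrow(1)$ follows at once because $\Z G$ has $\Z$-rank $|G|$. The substantive implication is $(1)\Rightarrow(3)$.

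To prove $(1)\Rightarrow(3)$, set $n=|G|=|H|$ and write $h=\sum_{g\in G}a_{h,g}\,g$ for each $h\in H$, producing a matrix $A=(a_{h,g})\in M_n(\Z)$. Because $H$ is $\Z$-linearly independent with $|H|=n=\dim_\Q\Q G$, it is a $\Q$-basis of $\Q G$, so $A\in\GL_n(\Q)$. It therefore suffices to show $\det A\in\{\pm 1\}$: then $A\in\GL_n(\Z)$ and, consequently, $\Z G=\Z[H]$.

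The plan is to introduce the symmetric $\Z$-bilinear form $\tau:\Z G\times\Z G\to\Z$ defined by $\tau(x,y)=(xy)_1$ (the coefficient of $1$ in $xy$), and to compare its Gram matrices in the two $\Q$-bases $G$ and $H$ of $\Q G$. In the basis $G$, one has $(M_G)_{g,g'}=\delta_{g',g^{-1}}$, the permutation matrix of inversion on $G$, whose determinant is $\pm 1$. For the Gram matrix $M_H$ indexed by $H$: given $u,v\in H$, the product $uv$ lies in $H$ and is either equal to $1$ (yielding $\tau(u,v)=1$) or is a torsion unit of $\Z G$ distinct from $\pm 1$ (note that $-1\notin V(\Z G)$ since $\aug(-1)=-1$). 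In the latter case, Theorem~\ref{BermanHigman} forces $(uv)_1=0$. Hence $M_H$ is also a permutation matrix, corresponding to inversion on $H$, with determinant $\pm 1$.

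Since the two $\Q$-bases are related by $A$, the change-of-basis formula gives $M_H=AM_GA^T$, so $(\det A)^2=\det(M_H)/\det(M_G)\in\{\pm 1\}$; non-negativity of the square then forces $\det A=\pm 1$. The main obstacle I anticipate is psychological rather than technical: one must upgrade the easy $\Q$-level statement ``$H$ is a $\Q$-basis of $\Q G$'' to the integral statement $\Z G=\Z[H]$, and the key is to notice that the Berman-Higman Theorem is exactly what makes $H$ behave ``orthonormally'' with respect to $\tau$, so that the classical Gram-determinant/change-of-basis argument runs over $\Z$.
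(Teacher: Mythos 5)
Your proof is correct, and it takes a genuinely different route from the paper's. The paper clears denominators directly: starting from $n\Z G\subseteq \Z[H]$ for some positive integer $n$, it writes $ng=\sum_{h\in H}m_h h$, multiplies by $h^{-1}$, and reads off the coefficient of $1$ via Berman--Higman to conclude $n\mid m_h$ for each $h$, hence $g\in\Z[H]$. You instead package the same use of Berman--Higman into a Gram-matrix argument for the form $\tau(x,y)=(xy)_1$ (essentially the trace form of the regular representation, normalized by $1/|G|$): Berman--Higman says exactly that $H$, like $G$, is a ``dual pair'' for $\tau$, so both Gram matrices are permutation matrices of inversion with determinant $\pm 1$, and the change-of-basis identity $M_H = A M_G A^T$ forces $\det A=\pm 1$, i.e.\ $A\in\GL_n(\Z)$. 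The paper's argument is more elementary and self-contained; yours is more conceptual, making transparent why the integrality obstruction vanishes (it identifies the Berman--Higman phenomenon as unimodularity of the group basis with respect to the natural pairing, which is a useful viewpoint to carry forward). One small cosmetic point: you write $(\det A)^2\in\{\pm 1\}$ before invoking non-negativity; it is cleaner to note directly that $(\det A)^2$ is a positive rational dividing $1$ in $\Z$, hence equals $1$.
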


\begin{proof}
(3) implies (2) and (2) implies (1) are obvious. Suppose that $|H|=|G|$.
Clearly $\Z[H]\subseteq \Z G$.
 and we have just observed that $\Q G=\Q[H]$. Thus $n\Z G\subseteq \Z[H]$ for some positive integer $n$.
So, if $g\in G$ then $ng=\sum_{h\in H} m_hh$ for some $m_h\in \Z$. Thus, for every $h\in H$ we have $ngh^{-1} = m_h+\sum_{k\in H\setminus \{h\}} m_k kh^{-1}$.
Applying once more the Berman-Higman Theorem we deduce that the coefficient of $1$ in $\sum_{k\in H\setminus \{h\}} m_k kh^{-1}$ is $0$. Therefore $m_h=na$ where $a$ is the coefficient of $1$ in $gh^{-1}$.
Thus $m_h$ is a multiple of $n$ for every $h$ and hence $g=\sum_{h\in H} \frac{m_h}{n} h \in  \Z[H]$.
This proves that $\Z G=\Z[H]$ and hence $H$ is an integral basis of $\Z G$.
\end{proof}

Observe that, by Corollary~\ref{GroupBasis}, the Isomorphism Problem can be restated as whether all the subgroups of $V(\Z G)$ with the same cardinality as $G$ are isomorphic.

As every idempotent matrix with entries in a field $F$ of characteristic $0$, is diagonalizable with eigenvalues $0$ and $1$, if $\rho$ is an $F$-representation of $G$ and $e$ is an idempotent of $FG$ then $\rho(e)$ is conjugate to a diagonal matrix with entries $0$ and $1$. Moreover the number of ones in the diagonal is the rank of $\rho(e)$. Therefore $\chi(e)$ is the rank of $\rho(e)$.
Using thid snf the same idea as for the proof of the Berman-Higman Theorem one can obtain the following:

\begin{lemma}\label{TraceIdempotent}
	Let $K$ be a field extension of $\Q$ and let $e=\sum_{g\in G} e_g g\in KG$ with $e^2=e\not\in \{0,1\}$. Then $e_1$ is a rational number in the interval $(0,1)$.
\end{lemma}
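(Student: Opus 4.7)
The plan is to copy the structure of the Berman--Higman proof, replacing ``torsion unit'' by ``idempotent'' and ``root of unity'' by ``$0$ or $1$''. Set $n=|G|$ and consider the regular representation $\rho:KG\to M_n(K)$ afforded by left multiplication on $KG$ written in the basis $G$. As observed in the proof of Theorem~\ref{BermanHigman}, $\tr(\rho(1))=n$ and $\tr(\rho(g))=0$ for every $g\in G\setminus\{1\}$, so by $K$-linearity of $\tr\circ\rho$ one gets $\tr(\rho(e))=n\,e_1$.

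Next I would exploit that $\rho(e)$ is an idempotent of $M_n(K)$, since $\rho$ is a ring homomorphism and $e^2=e$. The minimal polynomial of $\rho(e)$ divides $X(X-1)$, which has the two distinct roots $0$ and $1$ in any field; hence $\rho(e)$ is diagonalizable over $K$, conjugate to $\diag(1,\dots,1,0,\dots,0)$ with, say, $k$ ones and $n-k$ zeros. Therefore $\tr(\rho(e))=k\in\{0,1,\dots,n\}$, and combining with the previous paragraph yields $e_1=k/n\in\Q\cap[0,1]$.

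It remains to exclude the endpoints $k=0$ and $k=n$. If one would like to avoid invoking diagonalizability over $K$ directly, the same conclusion follows by extending scalars to $\C$ (or to an algebraic closure of $K$): the trace is preserved under such extension. The crucial additional input is that the regular representation $\rho:KG\to M_n(K)$ is injective, since $\rho(x)(1)=x$ recovers $x$ from $\rho(x)$. Hence $k=0$ forces $\rho(e)=0$ and so $e=0$, while $k=n$ forces $\rho(e)=I_n=\rho(1)$ and so $e=1$. Both cases have been excluded by hypothesis, so $0<k<n$ and $e_1=k/n\in(0,1)\cap\Q$.

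The only mildly delicate step is the diagonalizability/trace-equals-rank assertion, but as noted it is entirely elementary once one remarks that $X(X-1)$ splits with distinct roots over every field; no obstacle of substance arises, so the main content of the argument really is the trace computation inherited from Berman--Higman.
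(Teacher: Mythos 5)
Your proof is correct and follows essentially the same route as the paper: both use the regular representation, compute $\tr(\rho(e))=|G|\,e_1$, observe that the eigenvalues of the idempotent $\rho(e)$ are $0$ and $1$ so the trace equals the multiplicity of $1$, and invoke the injectivity of $\rho$ to rule out the endpoint values. Your version just spells out the diagonalizability via the minimal polynomial, which the paper leaves implicit.
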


\begin{proof}
	Let $\rho$ be the regular representation of $G$ and $\chi$ the character afforded by $\rho$. Then all the eigenvalues of $\rho(e)$ are $0$ or $1$ and
	$\chi(e)$ is the multiplicity of $1$ as eigenvalue of $\rho(e)$. As $e\not\in \{0,1\}$ and $\rho$ is injective, $\chi(e)\in \{1,\dots,|G|-1\}$ and
	$e_1=\frac{\chi(e)}{|G|}$.
\end{proof}

\begin{corollary}\label{FiniteSubgroupDividesG}
	The order of every finite subgroup of $V(\Z G)$ divides $|G|$.
\end{corollary}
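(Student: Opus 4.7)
The plan is to exploit the averaging idempotent $\widehat{H} = \frac{1}{|H|}\sum_{h\in H} h \in \Q G$ and pin down its coefficient of $1$ in two different ways, matching the Berman--Higman count against Lemma~\ref{TraceIdempotent}. Setting $n=|H|$, the case $n=1$ is trivial, so I will assume $n>1$.

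First I would verify that $\widehat{H}$ is a nontrivial idempotent of $\Q G$. The standard left-multiplication trick (for $h\in H$, the map $k\mapsto hk$ permutes $H$, so $h\,\widehat{H}=\widehat{H}$, whence $\widehat{H}^2=\widehat{H}$) gives idempotency. It is nonzero because, as computed below, its coefficient at $1$ is $\tfrac{1}{n}\ne 0$. It is not $1$ either: if it were, then $\sum_{h\in H}h = n\cdot 1$ would be a nontrivial $\Z$-linear relation among the elements of $H$ (since $n>1$ and $1\in H$), contradicting Corollary~\ref{FiniteSubgroupIndependent}.

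Next I would compute the coefficient $(\widehat{H})_1$ of $1$ in $\widehat{H}$ using Berman--Higman. Writing $h=\sum_{g\in G}h_g g$ for $h\in H\subseteq V(\Z G)$, each $h$ is a torsion unit with $\aug_G(h)=1$, so Theorem~\ref{BermanHigman} forces $h_1=0$ unless $h=\pm 1$; the augmentation condition then forces $h=1$. Hence exactly one summand in $n(\widehat{H})_1=\sum_{h\in H}h_1$ contributes, namely $h=1$, giving $(\widehat{H})_1=\tfrac{1}{n}$.

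Finally I would apply Lemma~\ref{TraceIdempotent} to the nontrivial idempotent $\widehat{H}\in\Q G$, which yields an integer $\chi(\widehat{H})\in\{1,\dots,|G|-1\}$ with $(\widehat{H})_1=\tfrac{\chi(\widehat{H})}{|G|}$. Equating the two expressions gives $|G|=n\cdot \chi(\widehat{H})$, so $n\mid |G|$. There is no real obstacle here — the only subtle point is making sure $\widehat{H}$ is genuinely a nontrivial idempotent, which is exactly where linear independence of $H$ (Corollary~\ref{FiniteSubgroupIndependent}) is used; after that, Berman--Higman and Lemma~\ref{TraceIdempotent} do the rest.
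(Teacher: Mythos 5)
Your proof is correct and follows essentially the same route as the paper: both use the averaging idempotent $\widehat{H}$, compute its coefficient at $1$ to be $1/|H|$ via Berman--Higman, and observe that the trace of the idempotent under the regular representation must be an integer. The only cosmetic difference is that you invoke Lemma~\ref{TraceIdempotent} as a black box (which requires your small detour to verify $\widehat{H}\neq 0,1$), whereas the paper simply computes $\chi(\widehat{H})=|G|/|H|$ directly and notes it equals the rank of $\rho(\widehat{H})$, sidestepping the nontriviality check entirely.
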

\begin{proof}
	Let $\rho$ be the regular representation and let $\chi$ be the character afforded by $\rho$.
	
	Let $H$ be a finite subgroup of $V(\Z G)$ and let $e=\widehat{H}=\frac{\sum_{h\in H} h}{|H|}$. Then $e$ is an idempotent of $\Q G$ and hence $r=\chi(e)$, where $r$
	is the rank of $\rho(e)$. On the other hand $\chi(h)=|G|c_h$ where $c_h$ is the coefficient of $1$ in $h$. By the Berman-Higman Theorem, $c_h=0$ unless $h=1$.
	Therefore $r=\chi(e)=\frac{|G|}{|H|}$, is an integer and thus $|H|$ divides $|G|$.
\end{proof}

\section{Problems on finite subgroups of $\U(\Z G)$}\label{SectionProblems}

In this section we collect some of the main problems on the finite groups of units of $\Z G$.
The results of the previous sections suggests that there is a strong connection between the finite subgroups $H$ of $V(\Z G)$ and the subgroups of $G$.
For example, the elements of $H$ are linearly independent  over $\Q$ (Corollary~\ref{FiniteSubgroupIndependent}) and the order of $H$ divides the order of $G$ (Corollary~\ref{FiniteSubgroupDividesG}).
Moreover, if $G$ is abelian then the torsion elements of $V(\Z G)$ are just the elements of $G$ (Corollary~\ref{TorsionCentral}).
We cannot expect that the latter generalizes to non-abelian groups because conjugates of $G$ in $\U(\Z G)$ are not included in $G$, in general.
So the most that we can expect is that the finite subgroups of $V(\Z G)$ are conjugate to subgroups of $G$ or at least isomorphic to subgroups of $G$.

\begin{example}{\rm
		Consider $S_3$, the symmetric group on three symbols which we realized as the semidirect product $S_3=\GEN{a}_3\rtimes \GEN{b}_2$, with $a^b=a^{-1}$.
		The ordinary character table of $S_3$ is as follows:
		$$\matriz{{c|rrr}
			& 1 & a & b \\\hline
			\aug & 1 & 1 & 1\\
			\sgn & 1 & 1 & -1\\
			\chi & 2 & -1 & 0
		}$$
		Moreover, $\chi$ is afforded by the following representation:
		$$\rho(a)=\pmatriz{{cc} -2 & -3 \\ 1 & 1}, \quad \rho(b)=\pmatriz{{cc} 1 & 0 \\ -1 & -1}.$$
		(This is not the most natural representation affording $\chi$, but it is well adapted to our purposes.)
		Therefore the map $\phi:\Q S_3 \rightarrow \Q \times \Q \times M_2(\Q), x \rightarrow (\aug(x),\sgn(x),\rho(x))$ is an algebra isomorphism.
		In particular $\phi$ restricts to an isomorphism from $\Z S_3$ to $\phi(\Z S_3)$ and the latter can be easily calculated using integral Gaussian elimination because it is the additive subgroup generated by the image of $S_3$ by $\phi$. After some straightforward calculations we have that
		$$\phi(\Z S_3) = \left\{ \left(x,y,\pmatriz{{cc} a & 3b \\ c & d} \right): x,y,a,b,c,d\in \Z,
		\matriz{{c} x\equiv y \bmod 2, \\ x \equiv a \bmod 3, \\ y \equiv d \bmod 3} \right\}.$$
		For example, there is $u\in \Z S_3$ with $\phi(u)=(1,-1,\diag(1,-1))$. As $\phi(u)^2=(1,1,I_2)$, $u$ is an element of order $2$ in $V(\Z
		S_3)$.
		The projection of $\phi(b)$ and $\phi(u)$ in the third coordinate are $\rho(b)=\pmatriz{{cc} 1 & 0 \\ 1 & 1}$ and $\rho(u)=\diag(1,-1)$, respectively.
		Although the diagonal form of $\rho(b)$ is $\rho(u)$, any invertible matrix $U\in M_2(\C)$ with $U^{-1}\rho(b)U=\rho(u)$ is of the form
		$$U=\pmatriz{{cc} 2x & 0 \\ x & y}$$
		with $x,y\in \C\setminus \{0\}$.
		Therefore if $U\in M_2(\Z)$, i.e. $x,y\in \Z$, then
		$$U^{-1}=\pmatriz{{cc} \frac{1}{2x} & 0 \\ \frac{1}{y} & \frac{1}{y}}\not\in M_2(\Z).$$
		This proves that $\rho(b)$ and $\rho(u)$ are not conjugate in $M_2(\Z)$ and therefore $\phi(b)$ and $\phi(u)$ are not conjugate in $\phi(\Z S_3)$. Since $\phi$ is injective $b$ and $u$ is not conjugate in $\Z S_3$ to $b$. As the three involutions of $S_3$ are conjugate in $S_3$. It follows that $u$ is not conjugate in $\Z S_3$ to any element of $S_3$.
		As all the elements of order $2$ of $S_3$ are conjugate in $S_3$, we conclude that $u$ is not conjugate in $\U(\Z G)$ to any element of $G$.
		However, $\rho(u)$ and $\rho(b)$ are conjugate in $M_2(\Q)$ and thus $\phi(u)$ and $\phi(b)$ are conjugate in $\phi(\Q G)$.
		As $\phi$ is an isomorphism, $u$ and $b$ are conjugate in the units of $\Q G$.
}\end{example}

The previous example shows that not all the torsion elements of $V(\Z G)$ are conjugate to elements of $S_3$ in $\Z S_3$. However, using the isomorphism $\phi$ it can be easily proven that the torsion element of $V(\Z S_3)$ are conjugate in $\Q S_3$ to an element of $S_3$. This suggests the following problems where we use the following terminology: two subgroups or elements of $\U(\Z G)$ are said to be \emph{rationally conjugate} if they are conjugate within the units of $\Q G$.

\newpage
\noindent\textbf{The Zassenhaus Problems}\footnote{These problems have been known for a long time as the Zassenhaus Conjectures because, at least an affirmative to (ZC1) was mentioned as a conjecture by H. Zassenhaus \cite{Zassenhaus1974}. S.K. Sehgal attributed to Zassenhaus the three as conjectures in \cite{Sehgal1978} and even if negative solutions to (ZC2) and (ZC3) are are known since the beginning of the 1990s the authors kept calling them the Zassenhaus Conjectures. Since we also know now counterexamples for the first one, I prefer to call them problems now.}:
Given a finite group $G$:
\begin{itemize}
	\item[(ZC1)] Is every torsion element of $V(\Z G)$ rationally conjugate to an element of $G$?
	\item[(ZC2)] Is every finite subgroup of $V(\Z G)$, with the same order as $G$, rationally conjugate to $G$?
	\item[(ZC3)] Is every finite subgroup of $V(\Z G)$ rationally conjugate to a subgroup of $G$?
\end{itemize}
\bigskip

For brevity we say that one of the problems (ISO), (ZC1), ... holds for a group when it has a positive answer and otherwise we say that it does not hold. More generaly (P) implies (Q) means that if (P) holds for a group $G$ then so does (Q).

Clearly (ZC3) implies (ZC1) and (ZC2).
Moreover (ZC2) implies (ISO), or more precisely if (ZC2) holds for a finite group $G$ and $\Z G\cong \Z H$ for another group $H$ then $G\cong H$.

The following proposition shows that in the Zassenhaus Problems one can replace $\Q$ by any field of characteristic $0$. For its proof we need some notation.

If $F$ is a field, $A$ is a finite dimensional $F$-algebra and $a\in A$ then the \emph{norm} of $a$ over $F$ is $\Nr_{A/F}(a)=\det(\rho(a))$ where $\rho:A\rightarrow \End_F(A)$ is the regular representation of $A$, i.e. $\rho(a)(b)=ab$, for $a,b\in A$.  Observe that if $B$ is a basis of $A$ over $F$ then $\Nr_{A/F}(a) = \det(\rho_B(a))$, where $\rho_B(a)$ is the matrix representation of $\rho(a)$ in the basis $B$.
Let $P(X)=\chi_{A/F}(a)$, the characteristic polynomial of $\rho_B(a)$ over $F$. By the Cayley-Hamilton Theorem, $P(a)=0$. 
Furthermore, the independent term of $P(X)$ is equal to $\pm \Nr_{A/F}(a)$.
Therefore, $0=P(a)=\pm \Nr_{A/F}(a)+aQ(a)$ for some $Q\in F(X)$. 
Then $a\in \U(A)$ if and only if $\Nr_{A/F}(a)\ne 0$.
Moreover, if $E$ is a field containing $F$ as a subfield then $B$ is also a basis of $E\otimes_F A$ over $E$ and hence, considering $A$ embedded in $E\otimes_F A$ via the map $a\mapsto 1\otimes a$, we have $\Nr_{A\otimes_F E/E}(a) = \det(\rho_B(a)) = \Nr_{A/F}(a)$ for every $a\in A$.

\begin{proposition}\label{ConjugationIndependentField}
	Let $E/F$ be an extension of infinite fields, let $A$ be a finite dimensional $F$-algebra and let $B=E\otimes_F A$.
	Let $M$ and $N$ be finite subsets of $A$ which are conjugate within $B$. Then they are also conjugate within $A$.
\end{proposition}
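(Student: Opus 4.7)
The plan is to reduce the problem to finding an $F$-rational point outside the vanishing locus of a nonzero polynomial, which is where the hypothesis that $F$ is infinite comes in.

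First I would fix an enumeration. Since $M$ and $N$ are conjugate within $B$, there is a unit $u\in B$ and a bijection $\sigma:M\to N$ such that $um=\sigma(m)u$ for every $m\in M$. Consider the $F$-linear subspace
  $$V=\{x\in A : xm=\sigma(m)x \text{ for every } m\in M\}$$
of $A$ and the analogous $E$-linear subspace $V_E\subseteq B$. Because $\sigma(m),m\in A$, the equations defining $V$ are linear with coefficients in $F$, so extension of scalars gives $V_E=E\otimes_F V$; in particular, any $F$-basis $v_1,\dots,v_k$ of $V$ is also an $E$-basis of $V_E$, and the element $u\in V_E$ can be written $u=\sum_{i=1}^k e_iv_i$ for some $e_i\in E$.

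Next I would bring in the norm form. Fix an $F$-basis of $A$ (hence an $E$-basis of $B$) and consider the polynomial
  $$p(X_1,\dots,X_k)=\Nr_{A/F}\bigl(X_1v_1+\cdots+X_kv_k\bigr)\in F[X_1,\dots,X_k],$$
obtained by taking the determinant of the matrix of left multiplication in the chosen basis. By the observation preceding the proposition, the norm on $A$ agrees with the norm on $B$ on elements of $A$, so for any $a_1,\dots,a_k\in E$ we have $p(a_1,\dots,a_k)=\Nr_{B/E}(a_1v_1+\cdots+a_kv_k)$. Since $u\in V_E$ is a unit of $B$, $\Nr_{B/E}(u)\ne 0$; evaluating at the scalars $e_i$ we obtain $p(e_1,\dots,e_k)\ne 0$, so $p$ is a nonzero polynomial.

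Now I invoke the hypothesis that $F$ is infinite: a nonzero polynomial over an infinite field does not vanish on the whole affine space $F^k$, so there exist $f_1,\dots,f_k\in F$ with $p(f_1,\dots,f_k)\ne 0$. Setting $v=\sum_{i=1}^k f_iv_i\in V$, we get $\Nr_{A/F}(v)\ne 0$, so left multiplication by $v$ is an $F$-linear automorphism of the finite-dimensional algebra $A$; hence $v\in\U(A)$. By construction $vm=\sigma(m)v$ for every $m\in M$, so $vMv^{-1}=N$, which shows that $M$ and $N$ are conjugate within $A$.

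The only step that requires any care is the identification $V_E=E\otimes_F V$ (i.e.\ the fact that the solution space of an $F$-linear system does not grow when one extends scalars to $E$); otherwise the argument is standard Zariski density. The key conceptual point is that the \emph{invertibility} of an element in a finite-dimensional algebra is detected by a single polynomial condition (nonvanishing of the norm), which together with the infinitude of $F$ lets one descend the witness from $B$ to $A$.
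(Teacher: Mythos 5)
Your proof is correct and follows essentially the same route as the paper's: set up the $F$-linear solution space of the conjugation equations, observe that scalar extension to $E$ preserves a basis, and use the nonvanishing of the norm polynomial on the $E$-point coming from $u$ together with the infinitude of $F$ to descend to an invertible $F$-point. The only difference is cosmetic notation (you work with $uMu^{-1}=N$ rather than $M^u=N$), so there is nothing to add.
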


\begin{proof}
	Fix an $F$-basis $\{b_1,\dots,b_d\}$ of $A$.
	Let $u$ be a unit of $B$ such that $M^u=N$. For every $m\in M$ let $n_m=u^{-1}mu$. So the system of equations $Xn_m=mX$ has a solution in the units of $B$.
	Expressing this in terms of the $F$-basis $b_1,\dots,b_d$ of $A$ we obtain a system of homogeneous linear equations in $d$ unknowns, with coefficients in
	$F$ which has a solution $(e_1,\dots,e_d)$ in $E$ such that $e_1b_1+\dots+e_nb_d$ is a unit of $B$.
	Let $v_1,\dots,v_k$ be an $F$-basis of the set of solutions and consider the polynomial
	$f(X_1,\dots,X_k)=\Nr_{A/F}(X_1v_1+\dots+X_kv_k)=\Nr_{B/E}(X_1v_1+\dots+X_kv_k)$.
	By elementary linear algebra $v_1,\dots,v_k$ is also an $E$-basis of the set of solutions in $E$.
	Thus $e_1b_1+\dots+e_kb_k=x_1v_1+\dots+x_kv_k$ for some $x_1,\dots,x_k\in E$ and hence $f(x_1,\dots,x_k)\ne 0$.
	This implies that $f$ is not the zero polynomial. Then $f(y_1,\dots,y_k)\ne 0$ for some $y_1,\dots,y_k\in F$, since $F$ is infinite.
	Therefore $v=y_1v_1+\dots+y_kv_k$ is an element of $A$ with $\Nr_{A/F}(v)\ne 0$ and $vn_m=mv$ for each $m\in M$.
	The first implies that $v\in \U(A)$ and the second that $M^v=N$. We conclude that $M$ and $N$ are conjugate within $A$.
\end{proof}

Applying Proposition~\ref{ConjugationIndependentField} to $A=\Q G$ and $F$ a field containing $\Q$, and having in mind that $FG \cong F\otimes_{\Q} G$, we get the following:

\begin{corollary}\label{ConjugadoNoImportaCuerpo}
	Let $H$ be a finite subgroup of $V(\Z G)$ and let $F$ be a field containing $\Q$ then $H$ is rationally conjugate to a subgroup of $G$ if and only if it is conjugate in $FG$ to a subgroup of $G$.
\end{corollary}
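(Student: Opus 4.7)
The plan is to derive this corollary directly from Proposition~\ref{ConjugationIndependentField}, essentially as a re-packaging. One direction is immediate: if $H$ is rationally conjugate to a subgroup $K$ of $G$, then since $\Q G \subseteq FG$, the conjugating unit lives in $FG$ as well, so $H$ and $K$ are conjugate in $FG$.

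For the nontrivial direction, suppose $H$ is conjugate in $FG$ to a subgroup $K$ of $G$. I would set $A = \Q G$ and take the field extension $F/\Q$. Both fields are infinite (since any field containing $\Q$ is), so the hypothesis of Proposition~\ref{ConjugationIndependentField} is satisfied. Using the natural identification $FG \cong F \otimes_\Q \Q G$ already noted in the excerpt, we can view $B = F \otimes_\Q A$ as the ambient algebra containing $A = \Q G$. Now $M = H$ and $N = K$ are both finite subsets of $A = \Q G$ (because $H \subseteq V(\Z G) \subseteq \Q G$ and $K \subseteq G \subseteq \Q G$), and by hypothesis they are conjugate within $B = FG$.

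Proposition~\ref{ConjugationIndependentField} then yields a unit $v \in \U(\Q G)$ with $H^v = K$, which is precisely the statement that $H$ is rationally conjugate to $K$.

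There is no real obstacle here; the substantive work has already been done in Proposition~\ref{ConjugationIndependentField}. The only points that require care are checking that $F$ is necessarily infinite (automatic from $\Q \subseteq F$), that the finite sets $H$ and $K$ genuinely lie in $\Q G$ and not merely in $FG$, and that the algebra isomorphism $FG \cong F \otimes_\Q \Q G$ lets us put ourselves in the exact setup of the proposition. Once these identifications are made, the corollary is an immediate application.
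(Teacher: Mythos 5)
Your proof is correct and takes essentially the same route as the paper, which simply invokes Proposition~\ref{ConjugationIndependentField} with $A=\Q G$ and the identification $FG\cong F\otimes_{\Q}\Q G$; you have merely spelled out the details the paper leaves implicit.
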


\begin{corollary}\label{ConjugateCharacters}
	Let $H_1$ and $H_2$ be subgroups of $\U(\Z G)$. Then $H_1$ and $H_2$ are rationally conjugate if and only if there is an isomorphism $\phi:H_1\rightarrow H_2$
	such that $\chi(h)=\chi(\phi(h))$ for every $h\in H_1$ and every $\chi\in \Irr(G)$.
\end{corollary}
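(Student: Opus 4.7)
The strategy is to prove both directions separately; the forward direction is automatic and the substance lies in the converse. Throughout I treat $H_1$ and $H_2$ as finite, in line with the rest of the section. For the forward implication, if $u \in \U(\Q G)$ satisfies $u^{-1} H_1 u = H_2$, then $\phi(h) = u^{-1} h u$ is an isomorphism $H_1 \to H_2$. Every $\chi \in \Irr(G)$ extends $\C$-linearly to a trace function on $\C G$, so $\chi(\phi(h)) = \chi(u^{-1} h u) = \chi(h)$, as desired.

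For the converse, assume $\phi : H_1 \to H_2$ is an isomorphism with $\chi(h) = \chi(\phi(h))$ for every $\chi \in \Irr(G)$ and every $h \in H_1$. My plan is first to produce a conjugator inside $\C G$ and then to descend to $\Q G$. Use the Wedderburn decomposition $\C G \cong \prod_{\chi \in \Irr(G)} M_{n_\chi}(\C)$, whose component maps are the irreducible representations $\rho_\chi : \C G \to M_{n_\chi}(\C)$. For each $\chi$, the restriction $\rho_\chi|_{H_1}$ and the composite $\rho_\chi \circ \phi$ are $n_\chi$-dimensional complex representations of the finite group $H_1$, and by hypothesis they share the same character (both send $h$ to $\chi(h)$). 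Since a complex representation of a finite group is determined up to equivalence by its character, there exists $u_\chi \in \GL_{n_\chi}(\C)$ with $\rho_\chi(\phi(h)) = u_\chi^{-1} \rho_\chi(h) u_\chi$ for every $h \in H_1$.

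Next, I would package the local conjugators into a single element $u = (u_\chi)_\chi \in \prod_\chi M_{n_\chi}(\C) \cong \C G$. Then $u \in \U(\C G)$, and applying each projection $\rho_\chi$ shows $u^{-1} h u = \phi(h)$ in $\C G$ for every $h \in H_1$; therefore $u^{-1} H_1 u = H_2$ as subsets of $\C G$. Since $H_1$ and $H_2$ are finite subsets of $\Q G$ that are conjugate within $\C G$, Proposition~\ref{ConjugationIndependentField} applied with $F = \Q$, $E = \C$ and $A = \Q G$ produces a conjugator in $\U(\Q G)$, which is exactly rational conjugacy.

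The only nontrivial step is the passage from componentwise conjugation in each simple factor to conjugation by a single element of $\U(\C G)$; this dissolves the moment one writes down the Wedderburn decomposition. The descent from $\C G$ to $\Q G$ is a direct appeal to Proposition~\ref{ConjugationIndependentField}, which is precisely why that proposition was just established.
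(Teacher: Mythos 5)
Your proof is correct and follows essentially the same route as the paper: invoke the Wedderburn decomposition of $\C G$, use that a complex representation of a finite group is determined up to equivalence by its character to produce a conjugator in each simple component, assemble these into a unit of $\C G$, and then descend to $\Q G$. Your direct appeal to Proposition~\ref{ConjugationIndependentField} for the final descent is if anything slightly cleaner than the paper's citation of Corollary~\ref{ConjugadoNoImportaCuerpo}, whose statement literally concerns conjugation to subgroups of $G$ rather than to an arbitrary $H_2$.
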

\begin{proof}
	The necessary condition is obvious.
	Suppose that $\phi:H_1\rightarrow H_2$ is an isomorphism satisfying the condition.
	For every $\chi\in \Irr(G)$ fix a representation $\rho_{\chi}$ affording $\chi$.
	Then $\Phi=(\rho_{\chi})_{\chi\in \Irr}:\C G\rightarrow \prod_{\chi\in  \Irr(G)} M_{\chi(1)}(\C)$ is an isomorphism of $\C$-algebras.
	Moreover $\rho_{\chi}|_{H_1}$ and $\rho_{\chi}|_{H_2} \circ \phi$ are representations of $H_1$ affording the same character, namely $\chi|_{H_1}=\chi|_{H_2} \circ \phi$.
	Thus $\rho_{\chi}|_{H_1}$ and $\rho_{\chi}|_{H_2} \circ \phi$ are equivalent as $\C$-representations, i.e. there is $U_{\chi}\in M_{\chi(1)}(\C)$ such that
	$\rho_{\chi} \phi(h)=U^{-1} \rho_{\chi}(h) U$ for every $h\in H_1$.
	Hence $u=\Phi((U_{\chi})_{\chi\in \Irr(G)})$ is a unit of $\C G$ such that
	$u^{\inv}hu=\phi(h)$ for every $h\in H_1$. Thus $u^{-1}H_1u=\phi(H_1)=H_2$, i.e. $H_1$ and $H_2$ are conjugate in $\C G$. We conclude that $H_1$ and $H_2$ are conjugate in
	$\Q G$, by Corollary~\ref{ConjugadoNoImportaCuerpo}.
\end{proof}

If we replace conjugacy by isomorphism we obtain versions of the Zassenhaus Problems.
For example, the Isomorphism Problem is the ``isomorphism version'' of (ZC2) asking whether all the subgroups of $\Z G$ with the same cardinality as $G$ are isomorphic.
The isomorphism versions of (ZC3) is the following question:

\bigskip
\noindent \textbf{The Subgroup Problem}:
(ISOS) Is every finite subgroup of $V(\Z G)$ isomorphic to a subgroup of $G$?
\bigskip

The isomorphism version of (ZC1) is known as the Spectrum Problem.
The set of orders of the torsion elements of a group $\Gamma$ is call the \emph{spectrum} of $\Gamma$.
Observe that two cyclic groups are isomorphic if and only if they have the same cardinality. Therefore the isomorphism version of (ZC1) is the following:

\bigskip
\noindent\textbf{The Spectrum Problem}:
(SpP) Do $G$ and $V(\Z G)$ have the same spectra?
\bigskip

A weaker version of the Spectrum Problem is the Prime Graph Question which was proposed by Kimmerle. The \emph{prime graph} of $\Gamma$ is the undirected graph whose vertices are the prime integers $p$ with $p=|g|$ for some $g\in \Gamma$ and the edges are the
pairs $\{p,q\}$ of different primes $p$ and $q$ with $pq=|g|$ for some $g\in \Gamma$, i.e. with $pq$ in the spectrum of $G$.

\bigskip
\noindent\textbf{The Prime Graph Question}: (PQ)
Does $G$ and $V(\Z G)$ have the same prime graph?
\bigskip

By the Cohn-Livingstone Theorem (Proposition~\ref{CohnLivingstone}), the spectra of $G$ and $V(\Z G)$ contain the same prime powers.
Moreover, by Proposition~\ref{FiniteSubgroupDividesG} and Sylow Theorem, the sets of orders of the finite $p$-subgroups of $V(\Z G)$ and $G$ coincide.
This suggest the following particular cases of the Subgroup Problem and (ZC2):

\bigskip
\noindent\textbf{The Sylow Subgroup Problem}:
(SyP) Is every finite $p$-subgroup of $V(\Z G)$ isomorphic to a subgroup of $G$?
\bigskip

\bigskip
\noindent\textbf{The Sylow-Zassenhaus Problem}:
(SZP) Is every finite $p$-subgroup of $G$ rationally conjugate to a subgroup of $G$?
\bigskip

A weaker version of the Zassenhaus Problem (ZC1) was proposed by Kimmerle.
Similar weaker versions of (ZC2) and (ZC3) make sense.

\bigskip
\noindent\textbf{The Weak Zassenhaus Problems}:
\begin{itemize}
	\item[(WZP1)] Is every torsion element of $V(\Z G)$ conjugate to an element of $G$ in $\Q H$ for some finite group $H$ containing $G$ as subgroup?
	\item[(WZP2)] Is every finite subgroup of $V(\Z G)$ with the same order as $G$ conjugate to $G$ in $\Q H$ for some finite group $H$ containing $G$ as subgroup?
	\item[(WZP3)] Is every finite subgroup of $V(\Z G)$ conjugate to a subgroup of $G$ in $\Q H$ for some finite group $H$ containing $G$ as subgroup?
	\item[(WSZP)] Is every finite $p$ subgroup of $V(\Z G)$ conjugate to a subgroup of $G$ in $\Q H$ for some finite group $H$ containing $G$ as subgroup?
\end{itemize}
\bigskip

A final question related with these problems is the Automorphism Problem which tries to predicts how the automorphisms of $\Z G$ are.
Consider the following subgroups of $\Aut(\Z G)$:
	$$\Aut_*(\Z G) = \{\alpha \in \Aut(\Z G) : \aug(\alpha(x))=\aug(x) \text{ for all }x\in \Z G\}$$
and 
	$$\Aut_h(\Z G) = \{\alpha \in \Aut(\Z G) : \alpha(g)\in \Z g \text{ for all } 
	g\in G\}.$$
The latter is easy to describe. 
Let $G_0=\Hom(G,\{1,-1\})$, the set formed by the group homomorphism $G\rightarrow \{1,-1\}$, with the group structure given by pointwise multiplication: $(\alpha \beta)(x)=\alpha(x)\beta(x)$. 
Observe that $G_0$ is isomorphic to the group of linear characters of the Sylow $2$-subgroup of $G/G'$ and hence $G_0$ is isomorphic to the Sylow 2-subgroup of $G/G'$.
Every $\beta\in G_0$ determines an element $\overline{\beta}$ of $\Aut_h(\Z G)$ with $\overline{\beta}(g)=\beta(g)g$ for each $g\in G$ and $\beta\mapsto \overline{\beta}$ defines an isomorphism $G_0\rightarrow \Aut_h(\Z G)$. 

Let $\alpha\in \Aut(\Z G)$. Then $f(g)=\aug(\alpha(g))\alpha(g)$ and $\beta(g)=\aug(\alpha(g))$ define group homomorphisms $f:G\rightarrow \U(\Z G)$ and $\beta \in G_0$ and $f(G)$ is a basis of $\Z G$, as $\Z$-module.
Hence $f$ extends to an automorphism of $\Z G$, also denoted $f$, and $f\in \Aut_*(\Z G)$. 
Then
 $\alpha(\sum_{g\in G} a_g g)=\sum_{g\in G} a_g \alpha(g) = \sum_{g\in G} a_g \beta(g) f(g) = f(\sum_{g\in G} a_g \beta(g)g)=
 (f  \circ \overline{\beta})(\sum_{g\in G} a_g g)$. 
This proves that $\Aut(\Z G)=\Aut_*(\Z G)\Aut_h(\Z G)$ and clearly $\Aut_*(\Z G)\cap \Aut_h(\Z G)=1$.

So the description of $\Aut(\Z G)$ reduces to that of $\Aut_*(\Z G)$.
Every automorphism of $G$ extends uniquely to an element of $\Aut_*(\Z G)$.
We can identify the latter with the group $\Aut(G)$ of automorphisms of $G$ so we see $\Aut(G)$ as a subgroup of $\Aut_*(\Z G)$.
Also, the inner automorphisms of $\Z G$ belong to $\Aut_*(\Z G)$.
More generally, the inner automorphisms of $\Q G$ leaving $\Z G$ invariant form another normal subgroup of $\Aut_*(\Z G)$.
We denote this group $\Inn_{\Q G}(\Z G)$. Then $\Aut(G)\Inn_{\Q G}(\Z G)$ is a subgroup of $\Aut_*(\Z G)$.

\bigskip
\noindent \textbf{The Automorphism Problem}
(AUT) Is $\Aut_*(\Z G)=\Aut(G)\Inn_{\Q G}(\Z G)$?
\bigskip

\begin{proposition}\label{ZP2-ISO-AUT}
	(ZC2) holds for $G$ if and only if (ISO) and (AUT) for $G$.
\end{proposition}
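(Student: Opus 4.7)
The plan is to prove both implications directly, using nothing beyond the Universal Property of Group Rings, the normalization trick introduced just before the abelian case of the Isomorphism Problem, and Corollary~\ref{GroupBasis}.

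$(\Rightarrow)$ Assume (ZP2) holds for $G$. To derive (ISO), I would start from an isomorphism $\Z H\cong\Z G$ and apply the normalization trick to produce an isomorphism $g:\Z H\to\Z G$ with $g(H)\subseteq V(\Z G)$; since $g(H)$ is then a finite subgroup of $V(\Z G)$ of order $|G|$, (ZP2) makes it rationally conjugate to $G$, and in particular $H\cong g(H)\cong G$. To derive (AUT), I would take an arbitrary $\alpha\in\Aut_*(\Z G)$ and note that $\alpha(G)$ is a finite subgroup of $V(\Z G)$ of order $|G|$; (ZP2) then supplies $u\in\U(\Q G)$ with $\alpha(G)=uGu^{-1}$, so $\sigma(g)=u^{-1}\alpha(g)u$ is an automorphism of $G$. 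Letting $\widetilde{\sigma}\in\Aut(G)\subseteq\Aut_*(\Z G)$ denote its linear extension, one rewrites $\alpha=c_u\circ\widetilde{\sigma}$ where $c_u$ is conjugation by $u$, and since both $\alpha$ and $\widetilde{\sigma}$ stabilize $\Z G$ so does $c_u$, placing it in $\Inn_{\Q G}(\Z G)$.

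$(\Leftarrow)$ Assume (ISO) and (AUT). Given a finite subgroup $H\leq V(\Z G)$ with $|H|=|G|$, Corollary~\ref{GroupBasis} makes $H$ an integral basis of $\Z G$, so $\Z G=\Z[H]\cong\Z H$ and (ISO) delivers a group isomorphism $\sigma:H\to G$. The Universal Property of Group Rings applied to the group homomorphism $g\mapsto\sigma^{-1}(g)$ yields a ring automorphism $\alpha$ of $\Z G$ sending the basis $G$ bijectively to the basis $H$, so $\alpha(G)=H\subseteq V(\Z G)$ and hence $\alpha\in\Aut_*(\Z G)$. Invoking (AUT), I would write $\alpha=\tau\circ c_u$ with $\tau\in\Aut(G)$ and $u\in\U(\Q G)$ satisfying $c_u(\Z G)=\Z G$. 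Since $\tau$ extends to a $\Q$-algebra automorphism of $\Q G$, a direct computation yields
\[
H=\alpha(G)=\tau(uGu^{-1})=\tau(u)\,G\,\tau(u)^{-1},
\]
so $H$ is rationally conjugate to $G$ via $\tau(u)\in\U(\Q G)$.

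No step is genuinely difficult; the main point requiring attention is, in the forward direction, the verification that the conjugating unit supplied by (ZP2) gives an inner automorphism lying in $\Inn_{\Q G}(\Z G)$ rather than in the larger group of inner automorphisms of $\Q G$. As indicated, this reduces to the observation that $\alpha$ and $\widetilde{\sigma}$ both stabilize $\Z G$, so the remaining factor must as well. Everything else is a bookkeeping exercise combining Corollary~\ref{GroupBasis} with the Universal Property of Group Rings.
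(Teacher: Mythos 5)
Your proof is correct and follows essentially the same strategy as the paper: for the forward direction, apply (ZP2) to a group basis to get (ISO), and decompose an augmentation-preserving automorphism through the conjugating unit supplied by (ZP2) to get (AUT); for the converse, use Corollary~\ref{GroupBasis} and (ISO) to build a ring automorphism taking $G$ to $H$, then split it via (AUT). The only cosmetic difference is in the backward direction, where you construct the automorphism of $\Z G$ directly from $\sigma^{-1}:G\to H$ rather than routing through the intermediate ring $\Z H$ as the paper does, but the underlying idea is identical.
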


\begin{proof}
	Suppose that (ZC2) holds for $G$ and let $H$ be a subgroup of $V(\Z G)$ of cardinality $|G|$.
	Then $H$ is rationally conjugate to $G$ and hence $G\cong H$.
	Thus (ISO) holds for $G$.
	Suppose now that $\alpha\in \Aut_*(\Z G)$. Then $H=\alpha(G)$ is a subgroup of $V(\Z G)$ with the same order as $G$. By assumption, there is a unit $u$ of $\Q G$ such that $H=u^{-1}Gu$. Let $\beta$ be the inner automorphism of $\Q G$ defined by $u$.
	Then $\beta(\Z G) =\Z H\subseteq \Z G$ and therefore $\beta\in \Inn_{\Q G}(\Z G)$.
	Moreover, $\beta^{-1}\alpha \in \Aut(G)$. Thus $\alpha \in \Aut(G)\Inn_{\Q G}(\Z G)$.
	We conclude that (AUT) holds for $G$.
	
	Conversely, suppose that (ISO) and (AUT) hold for $G$.
	Let $H$ be a subgroup of $G$ with the same order as $G$.
	By the Universal Property of Group Rings there is a ring homomorphism $\beta:\Z H\rightarrow \Z G$ whose restriction to $H$ is the identity of $H$.
	As $G$ and $H$ have the same order, $\beta$ is an isomorphism and hence, by assumption, there is an isomorphism $\alpha:G\rightarrow H$.
	Applying again the Universal Property of Group Rings there is a ring isomorphism $\Z G\rightarrow \Z H$ extending $\alpha$, which we also denote $\alpha$.
	Then $\beta\alpha\in \Aut_*(\Z G)$ and by assumption $\beta\alpha = \delta\gamma$ for some $\gamma\in \Aut(G)$ and $\delta\in \Inn_{\Q G}(\Z G)$.
	Then $H=\beta(H) = \delta \gamma \alpha^{-1}(H) = \delta(G)$. Therefore $H$ is rationally conjugate to $G$. This proves that (ZC2) holds for $G$.
\end{proof}

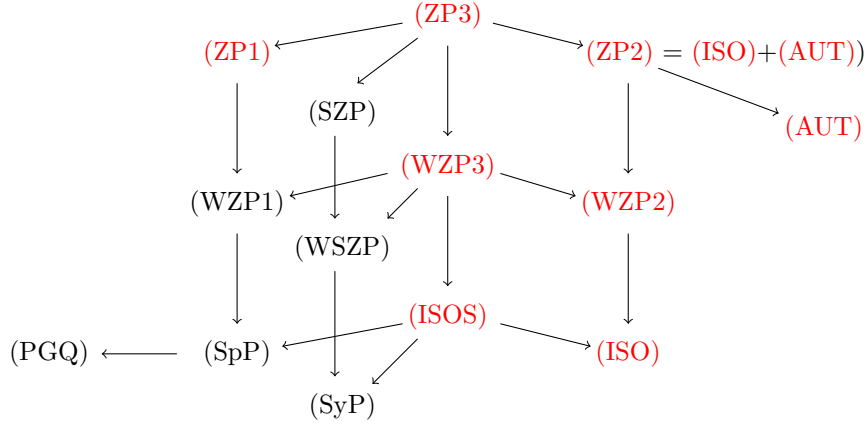
\begin{figure}[h!]
	\begin{center}
		\begin{tikzpicture}
		\node at (2.8,5.5) {\textcolor{red}{(ZC3)}};
		\draw[<-] (0.5,5.1)--(2.2,5.4);
		\node at (0,5) {\textcolor{red}{(ZC1)}};
		\draw[->] (3.4,5.4)--(4.6,5.1);
		\node at (6.5,5) {\textcolor{red}{(ZC2)} = \textcolor{red}{(ISO)}+\textcolor{red}{(AUT)})};
		\draw[->] (5.6,4.8)--(7.2,4.2);
		\node at (7.8,4) {\textcolor{red}{(AUT)}};
		
		\draw[->] (0,4.6)--(0,3.4);
		\draw[->] (2.8,5.1)--(2.8,3.9);
		\draw[->] (5.2,4.6)--(5.2,3.4);
		
		\node at (2.8,3.5) {\textcolor{red}{(WZP3)}};
		\draw[->] (3.5,3.4)--(4.5,3.1);
		\node at (0,3) {(WZP1)};
		\draw[<-] (0.7,3.1)--(2,3.4);
		\node at (5.2,3) {\textcolor{red}{(WZP2)}};		
		\draw[->] (2.4,5.2)--(1.6,4.6);
		\node at (1.4,4.2) {(SZP)};		
		\draw[->] (2.4,3.2)--(2,2.8);
		\node at (1.4,2.4) {(WSZP)};
		\draw[->] (2.4,1.2)--(1.8,0.6);
		\node at (1.4,0.3) {(SyP)};
		\draw[->] (1.3,3.9)--(1.3,2.8);
		\draw[->] (1.3,2.1)--(1.3,0.7);
				
		\draw[->] (0,2.6)--(0,1.4);
		\draw[->] (2.8,3.1)--(2.8,1.9);
		\draw[->] (5.2,2.6)--(5.2,1.4);

		\node at (0,1) {(SpP)};
		\draw[<-] (0.6,1.1)--(2.2,1.4);
		\node at (2.8,1.5) {\textcolor{red}{(ISOS)}};
		\draw[->] (3.5,1.4)--(4.7,1.1);
		\node at (5.2,1) {\textcolor{red}{(ISO)}};
		
		\draw[->] (0,0.6)--(0,-0.4);
		\node at (0,-1) {(PQ)};
		\end{tikzpicture}
	\end{center}
	\caption{\label{ProblemImplications}
		Logical implications between problems on finite subgroups of $V(\Z G)$. 
		Red means that some negative solution is known. }
\end{figure}

%\newpage
Figure~\ref{ProblemImplications} collects the logical implications between the problems introduced in this section.
We list here a few relevant results on them.
See \cite{MargolisdelRioAGTA} for a more extensive list of results. 
We start with negative solutions which justified the red color in Figure~\ref{ProblemImplications}.

\textbf{Negative results}:
\begin{itemize}
	\item Roggenmkamp and Scott  constructed the first counterexample to (AUT) \cite{Roggenkamp1991} and Klingler discovered a simpler one \cite{Klingler1991}.
	This provides negative answers to the Zassenhaus Conjectures stating that (ZC2) and (ZC3) holds true for every group.
	
	\item Hertweck showed a counterexample to (ISO) \cite{Hertweck2001}.
	Of course this is another negative solution for (ZC2) but it is more complicated than the counterexamples of Roggenkamp and Scott and Klingler.
	
	\item Recently Eisele and Margolis \cite{EiseleMargolis18} have proved that a group proposed in \cite{MargolisdelRioCWAlgorithm} is a counterexample to the longest standing conjecture of Zassenhaus, namely the one stating that (ZC1) holds true for all finite groups.
\end{itemize}

\textbf{Positive solutions for (ZC3)}:
(ZC3) holds (and hence all the problems mentioned in this section) for the following groups.
\begin{itemize}
	\item nilpotent groups \cite{Weiss1991}.
	\item split metacyclic groups $A\rtimes X$ with $A$ and $X$ cyclic of coprime order \cite{Valenti1994}. The proof of this result is based in a previous proof in \cite{PolcinoSehgal1984} of a positive solution for (ZC1) for this class of groups.
\end{itemize}

\textbf{Positive solutions for (ZC1)}:
Besides the groups in the previous list positive solutions for (ZC1) has been proved for the following families of groups:
\begin{itemize}
	\item All the groups of order at most 143 \cite{BachleHermanKonovalovMargolisSingh2018}.
	\item groups with a normal Sylow subgroup with abelian complement \cite{Hertweck2006}.
	\item cyclic-by-abelian groups \cite{CaicedoMargolisdelRio2013}.
	\item $\PSL(2,q)$ for $q$ either a Fermat or Mersenne prime or $q \in \{8, 9, 11, 13, 16, 19, 23, 25, 32 \}$ \cite{LutharPassi1989, Hertweck2006, HertweckBrauer, Hertweck2008A6, KimmerleKonovalov2017, BachleMargolis2017, MargolisdelRioSerrano2017}.
%    \item $\SL(2,p)$ and $\SL(2,p^2)$ for $p$ prime \cite{delRioSerrano2018}. 
\end{itemize}

\textbf{Positive solutions for (ISO)}:
Withcomb proved (ISO) for metabelian groups, i.e. groups whose derived subgroup is abelian \cite{WhitcombThesis}.

\textbf{Negative solutions for the Isomorphism Problem for group algebras}:

\begin{itemize}
	\item Dade founded two metabelian groups $G$ and $H$ such that $FG\cong FH$ for every field $F$ \cite{Dade1971}.
	\item García-Lucas, Margolis and del Río obtained recently a negative answer for the \emph{Modular Isomorphism Problem} which is the version of the Isomorphism Problem for coefficients in the field with $p$ elements (in other versions for fields of characteristic $p$) and finite $p$-groups \cite{GarciaMargolisdelRio}. Actually, they only answer in negative the case $p=2$. The case $p>2$ is still open.
\end{itemize}

\section{$p$-elements}

Let $p$ be a prime integer.
Recall that an element of order a power of $p$ in a group is called a \emph{$p$-element}.
In this section we collect some results on $p$-elements of $V(\Z G)$.

We start describing the Jacobson radical of group algebras of $p$-groups over fields of characteristic $p$.

\begin{lemma}\label{JacobsonAugmentation}
	Let $F$ be a field of characteristic $p>0$ and let $G$ be a finite group.
	\begin{enumerate}
		\item If $G$ is a $p$-group then $\Aug(FG)=J(FG)$.
		\item If $P$ is a normal $p$-subgroup of $G$ then $\Aug_P(FG)$ is nilpotent.
	\end{enumerate}
\end{lemma}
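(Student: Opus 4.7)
My plan for (1) is to first establish that $\Aug(FG)$ is nilpotent and then derive the equality as follows. Since $FG/\Aug(FG)\cong F$ is a field, $\Aug(FG)$ is a maximal two-sided ideal of $FG$. A nilpotent ideal is always contained in the Jacobson radical, so $\Aug(FG)\subseteq J(FG)\subsetneq FG$, forcing $\Aug(FG)=J(FG)$. Thus the real content of (1) is the nilpotency of $\Aug(FG)$.

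To prove nilpotency I would induct on $|G|$, the case $G=\{1\}$ being trivial. For the inductive step I use that a nontrivial finite $p$-group has nontrivial center, so I can pick $z\in Z(G)$ of order $p$ and set $Z=\GEN{z}$. The quotient map $\aug_{G,Z}:FG\rightarrow F(G/Z)$ restricts to a surjection $\Aug(FG)\twoheadrightarrow \Aug(F(G/Z))$ with kernel $\Aug_Z(FG)$, so the inductive hypothesis applied to the smaller $p$-group $G/Z$ provides $m$ with $\Aug(F(G/Z))^m=0$, whence $\Aug(FG)^m\subseteq \Aug_Z(FG)$. By (4.1) and the centrality of $z$, the ideal $\Aug_Z(FG)$ coincides with the principal (two-sided) ideal $(z-1)FG=FG(z-1)$; moreover in characteristic $p$ one has $(z-1)^p=z^p-1=0$, and since $z-1$ is central this propagates to $\Aug_Z(FG)^p=(z-1)^p FG=0$. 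Combining both bounds gives $\Aug(FG)^{mp}=0$, completing the induction.

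For (2), the plan is to leverage (4.1) directly. Since $P$ is normal in $G$, the subring $FP$ is stable under $G$-conjugation, so $\Aug(FP)$ is as well; this is exactly what makes the two expressions $FG\cdot\Aug(FP)$ and $\Aug(FP)\cdot FG$ agree and together give the two-sided ideal $\Aug_P(FG)$. Using the commutation $\Aug(FP)\cdot FG=FG\cdot \Aug(FP)$ inductively, one checks that
$$\Aug_P(FG)^n = FG\cdot \Aug(FP)^n$$
for every $n\ge 1$. Applying part (1) to the $p$-group $P$, we have $\Aug(FP)^n=0$ for some $n$, and hence $\Aug_P(FG)^n=0$.

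The main subtlety lies in the inductive step of (1): one must arrange the kernel $\Aug_Z(FG)$ to be itself nilpotent, and centrality of $z$ is essential both to identify $\Aug_Z(FG)$ with a principal ideal generated by a central element and to commute $z-1$ past elements of $FG$ so that the vanishing $(z-1)^p=0$ propagates to a vanishing of $\Aug_Z(FG)^p$. Once (1) is proved, (2) follows quickly from (4.1) and the normality of $P$.
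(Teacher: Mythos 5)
Your proof is correct and follows essentially the same strategy as the paper: reduce (1) to nilpotency of $\Aug(FG)$, induct using a central subgroup of order $p$ together with \eqref{AugSubgroup}, and deduce (2) from (1) via $\Aug_P(FG)=FG\Aug(FP)$. The only cosmetic difference is that you compute $\Aug_Z(FG)^p=(z-1)^pFG=0$ directly, whereas the paper folds the order-$p$ case into the induction; both come to the same thing.
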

\begin{proof}
	(1) Suppose that $|G|=p^n$. As $FG$ is artinian, $\Aug(FG)\subseteq J(FG)$ if and only if $\Aug(FG)$ is nilpotent. As $\dim_F(FG/\Aug(FG))=1$, to prove (1) it is
	enough to show that $\Aug(FG)$ is nilpotent. We argue by induction on $n$. The case $n=1$ is obvious because in this case $FG$ is commutative, $\Aug(FG)$ is
	spanned as vector space over $F$, by the element of the form $g-1$, with $g\in G$ and $(g-1)^p=g^p-1=0$.
	Suppose that $n>1$ and let $H$ be a non-trivial central subgroup of $G$ of order $p$. By induction hypothesis, $\Aug(F(G/H))$ and $\Aug(FH)$ are nilpotent.
	Moreover $\aug_{G,H}(\Aug(FG))=\Aug(F(G/H))$. Therefore, using \eqref{AugSubgroup} we obtain, $\Aug(FG)^m \subseteq \ker \aug_{G,H}=\Aug_H(FG)=FG \Aug(FH)$, for some $m$.
	As $\Aug(FH)$ is nilpotent, so is $\Aug(FG)$.
	
	(2) As $J(FP)$ is nilpotent, so is $\Aug_P(FG)=FG \Aug(FP)=FG J(FP)=J(FP) FG$.
\end{proof}

\begin{lemma}\label{VPPgroup}
	If $p$ is a prime integer and $P$ is a normal $p$-subgroup of $G$ then every torsion element of $V(\Z G,P)$ is a $p$-element.
\end{lemma}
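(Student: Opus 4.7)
The plan is to exploit Lemma~\ref{JacobsonAugmentation}(2), which says $\Aug_P(\F_p G)$ is nilpotent, by reducing mod $p$. Let $u \in V(\Z G,P)$ be a torsion unit of order $n$, and write $n = p^a m$ with $\gcd(m,p)=1$; the goal is $m = 1$. Since $\Aug_P(\Z G)$ is an ideal and $u - 1 \in \Aug_P(\Z G)$, the factorization $u^{p^a}-1 = (u-1)(1+u+\cdots+u^{p^a-1})$ shows that $v := u^{p^a}$ still lies in $V(\Z G,P)$. Replacing $u$ by $v$, I may assume from the outset that $u$ has order exactly $m$, coprime to $p$, and it suffices to show $u = 1$.

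Next I would reduce mod $p$. Using the description $\Aug_P(RG) = RG \cdot \Aug(RP)$ from \eqref{AugSubgroup}, the natural surjection $\Z G \to \F_p G$ carries $\Aug_P(\Z G)$ onto $\Aug_P(\F_p G)$, so the image $\bar u$ of $u$ in $\F_p G$ satisfies $\bar u - 1 \in \Aug_P(\F_p G)$. By Lemma~\ref{JacobsonAugmentation}(2) this ideal is nilpotent, hence $\bar u - 1$ is nilpotent. In characteristic $p$ the Frobenius identity $(1+\epsilon)^{p^k} = 1 + \epsilon^{p^k}$ then forces $\bar u$ to have order a power of $p$ in $\U(\F_p G)$. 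Combined with $\bar u^m = 1$ and $\gcd(m,p)=1$, this gives $\bar u = 1$, i.e., $u \equiv 1 \pmod{p\Z G}$.

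The final step is to lift back to $\Z G$. Write $u = 1 + pw$ with $w \in \Z G$; expanding $(1+pw)^m = 1$ and isolating the leading term yields $mpw = -\sum_{j \ge 2} \binom{m}{j} p^j w^j \in p^2\Z G$, so $mw \in p\Z G$, and since $\gcd(m,p)=1$ also $w \in p\Z G$. Iterating the same expansion starting from $u = 1 + p^k w$ each time gives $u \equiv 1 \pmod{p^{2k}\Z G}$, so $u - 1$ lies in $\bigcap_k p^k \Z G = 0$, whence $u = 1$.

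The conceptual work sits entirely in invoking Lemma~\ref{JacobsonAugmentation}(2); the only mild obstacle I anticipate is bookkeeping in the $p$-adic lifting, where one needs to verify that the coprimality $\gcd(m,p)=1$ indeed propagates the divisibility at each stage.
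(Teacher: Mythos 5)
Your proof is correct and follows essentially the same route as the paper's: reduce mod $p$, use Lemma~\ref{JacobsonAugmentation}(2) to see that $\bar u - 1$ is nilpotent and hence $u \equiv 1 \pmod{p\Z G}$, then lift to $u=1$ using the coprimality of the order with $p$. The only cosmetic differences are that the paper reduces directly to a unit of prime order $q \ne p$ and obtains a contradiction by taking $p^i$ to be the exact power of $p$ dividing $u-1$, whereas you reduce to order $m$ coprime to $p$ and iterate the divisibility to land in $\bigcap_k p^k \Z G = 0$.
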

\begin{proof}
	Let $q$ be a prime integer different from $p$, let $u\in V(\Z G,P)$ of order $q$ and let $x=u-1$.
	Let $\F_p$ denote the field with $p $ elements.
	Then $x\in \Aug_P(\Z G)$ and hence the image of $x$ in $\F_pG$ is nilpotent by Lemma~\ref{JacobsonAugmentation}. Thus there is a positive integer $n$ such that $x^{p^n}\equiv 0\mod p\Z G$ and hence $u^{p^n} \equiv 1 \mod p\Z G$.
	As $u^q=1$ and $p$ and $q$ are different primes, we have $u \equiv 1 \mod p\Z G$.
	Thus $x=p^i y$ for some positive integer $i$ and $y\in \Z G$. If $x\ne 0$ then one may assume that $y\not \in p\Z G$.
	Then
	$$0=u^q-1=p^i\left (qy+\binom{q}{2}p^iy^2+\binom{q}{3}p^{2i}y^3+\dots+p^{(q-1)i}y^q\right),$$
	so that $p\mid y$, a contradiction. Thus $x=0$ and hence $u=1$, contradicting our hypothesis that $u$ has order $q$.
\end{proof}

Let $R$ be a ring. Then $[R,R]$ denotes the additive subgroup of $R$ generated by the Lie brackets
$$[x,y]=xy-yx, \quad (x,y\in R).$$
If $S$ is a subring of the center of $R$ then $R\times R \rightarrow R, (x,y)\mapsto [x,y]$ is an $S$-bilinear map.
Therefore $[R,R]$ is an $S$-submodule of $R$.
If moreover, $R=SX$, i.e. $R$ is generated by $X$ as $S$-module then $[R,R]$ is generated by $\{[x,y]:x,y\in X\}$ as $S$-module.
In particular, if $R$ is a commutative ring then $[RG,RG]=\sum_{g,h\in G} R[g,h]$.

\begin{lemma}\label{pthPowerCommutator}
	Let $p$ be a prime integer and let $R$ be an arbitrary ring. Then for every $n$ and $x,y\in RG$ we have
	$$(x+y)^{p^n} \equiv x^{p^n}+y^{p^n} \mod (pR+[R,R]).$$
	Moreover, if $x\in [R,R]$ then $x^p \in pR+[R,R]$.
\end{lemma}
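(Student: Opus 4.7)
The plan is to expand $(x+y)^{p^n}$ by distributivity into a sum over all length-$p^n$ words in the alphabet $\{x,y\}$, and then to exploit the natural action of the cyclic group $C = \Z/p^n\Z$ that shifts each word by one position. First I would write
$$(x+y)^{p^n} = \sum_{w} w(1)\,w(2)\cdots w(p^n),$$
where $w$ ranges over all functions $\{1,\dots,p^n\}\to\{x,y\}$. The key identity $a_1 a_2 \cdots a_k - a_2 \cdots a_k a_1 = [a_1,\,a_2 \cdots a_k] \in [R,R]$, iterated, shows that any two products arising from cyclic shifts of the same word are congruent modulo $[R,R]$.

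Second, I would partition the words into $C$-orbits. By the orbit-stabilizer theorem each orbit has size a power of $p$. The orbits of size one are exactly the two constant words, contributing $x^{p^n}$ and $y^{p^n}$; every other orbit has size divisible by $p$, so its contribution to the sum equals $p$ times a representative product modulo $[R,R]$, hence lies in $pR + [R,R]$. Summing over orbits yields
$(x+y)^{p^n} \equiv x^{p^n} + y^{p^n} \pmod{pR + [R,R]}$, which is the first claim.

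For the second claim, I would first iterate the first claim (with $n=1$) over a finite sum, obtaining $(\sum_i c_i)^p \equiv \sum_i c_i^p \pmod{pR+[R,R]}$ for any finite family $c_i\in R$. Writing an arbitrary element of $[R,R]$ as a finite sum of Lie brackets $[a_i,b_i]$, this reduces the task to proving that $[a,b]^p \in pR+[R,R]$ for every $a,b\in R$. Applying the first claim to the decomposition $[a,b] = ab + (-ba)$, and using that $(-1)^p(ba)^p \equiv -(ba)^p \pmod{pR}$ in both the odd-$p$ and $p=2$ cases, gives $[a,b]^p \equiv (ab)^p - (ba)^p \pmod{pR+[R,R]}$. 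The argument then closes with the identity $(ab)^p = a(ba)^{p-1}b$, which yields
$$(ab)^p - (ba)^p = a(ba)^{p-1}b - (ba)^{p-1}(ba) = \bigl[a,\,(ba)^{p-1}b\bigr] \in [R,R].$$

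The main point, or rather the key insight, is recognizing that $[R,R]$ is precisely the subgroup needed to absorb cyclic permutations of products, which is what makes the orbit-counting argument succeed. Once that is in place the only minor care required is the sign bookkeeping when $p=2$; everything else is routine manipulation.
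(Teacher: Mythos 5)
Your proof is correct, but it takes a genuinely different route to the first claim. The paper works only with the cyclic group $C_p$ of order $p$: it proves the case $n=1$ by noting that every non-constant $p$-tuple has an orbit of size exactly $p$ under cyclic shift (because $p$ is prime), then proves the second claim ($\alpha^p \in pR+[R,R]$ for $\alpha\in[R,R]$), and finally obtains the general $n$ by induction, using that second claim in the inductive step $(x+y)^{p^n}=(x^p+y^p+\alpha)^{p^{n-1}}$. You instead act by the full cyclic group $\Z/p^n\Z$ on words of length $p^n$ and invoke orbit--stabilizer: every non-constant word has orbit size a positive power of $p$, hence its orbit sum is $p^k$ times a common representative modulo $[R,R]$ and so lies in $pR+[R,R]$. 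This yields the first claim in one stroke, with no induction and no reliance on the second claim; the second claim is then derived from the $n=1$ case just as in the paper (decompose $[a,b]=ab+(-ba)$, use the identity $(ab)^p-(ba)^p=[a,(ba)^{p-1}b]$, and absorb the sign either because $p$ is odd or because $2R$ is being quotiented out). What your approach buys is a cleaner logical structure --- the two claims are proved in sequence rather than interleaved --- at the modest cost of needing the orbit--stabilizer observation for a non-prime cyclic group rather than the trivial ``free orbits for $p$ prime'' fact. One small point of bookkeeping: to make the formal expansion unambiguous even when $x=y$, you should index the sum by binary strings (or subsets of $\{1,\dots,p^n\}$) rather than by ``functions into $\{x,y\}$'', so that the two constant words are always distinct summands; this is cosmetic and does not affect the argument.
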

\begin{proof}
	As $pR$ is an ideal of $R$, by factoring modulo $pR$ we may assume that $pR=0$.
	Let $Z$ be the set formed by non-constant $p$-tuples with entries in $\{x,y\}$.
	Then $$(x+y)^p = x^p+y^p + \sum_{(z_1,\dots,z_p)\in Z} z_1z_2\cdots z_p.$$
	Consider the cyclic group $C_p=\GEN{g}$ of order $p$ acting on $Z$ by cyclic permutation, i.e.
		$$g\cdot (z_1,z_2,\dots,z_p)=(z_2,\dots,z_p,z_1).$$
	The orbit $O$ of $(z_1,z_2,\dots,z_p)$ of this action has exactly $p$-elements and, as $pz_1\dots z_p=0$ we have
	\begin{eqnarray*}
		&& z_1z_2\cdots z_p + z_2\cdots z_pz_1 + \dots + z_pz_1\cdots z_{p-1} \\
		&=&  (z_2\cdots z_pz_1-z_1z_2\cdots z_p) + \dots + (z_pz_1\cdots z_{p-1}-z_1z_2\cdots z_p)  \\
		&=& [z_2\cdots z_p,z_1]+[z_3\cdots z_p,z_1z_2]+\dots [z_p,z_1\cdots z_{p-1}] \in [R,R].
	\end{eqnarray*}
	Classifying the products $z_1z_2\dots z_p$ by orbits we deduce that $\sum_{(z_1,\dots,z_p)\in Z} z_1z_2\cdots z_p\in [R,R]$. 	
	This proves that for every $x,y\in R$, $(x+y)^p=x^p+y^p+\alpha$ for some $\alpha\in [R,R]$.
	In particular, there is $\alpha \in [R,R]$ with $[x,y]^p = (xy-yx)^p = (xy)^p-(yx)^p+\alpha=[x,(yx)^{p-1}y]+\alpha\in [R,R]$.
	Using this it easily follows that $\alpha^p\in [R,R]$ for every $\alpha \in [R,R]$.
	Then, arguing by induction on $n$, there are $\alpha,\beta\in [R,R]$ such that
	$$(x+y)^{p^n}=(x^p+y^p+\alpha)^{p^{n-1}} = x^{p^n}+y^{p^n}+\alpha^{p^{n-1}}+\beta \equiv x^{p^n}+y^{p^n} \mod [R,R].$$
\end{proof}

Given $a=\sum_{g\in G} a_g g\in RG$, with $a_g\in R$ for every $g\in G$ and a subset $X$ of $G$ we set
$$\varepsilon_X(a) = \sum_{x\in X} a_x.$$
The Berman-Higman Theorem states that if $u$ is a torsion element of $V(\Z G)$ of order different from one then $\varepsilon_{\{1\}}(x)=0$.
This notation will be used mostly with $X$ a conjugacy class of $G$ and with the sets of the form
$$G[n] = \{g\in G : |g|=n\}.$$
If $g\in G$ then $g^G$ denotes the conjugacy class of $g$ in $G$ and the \emph{partial augmentation} of $a$ at $g$ is  $\varepsilon_{g^G}(a)$.
When the group $G$ is clear from the context we simplify the notation by writing $\varepsilon_g(a)$ rather than $\varepsilon_{g^G}(a)$.

\begin{lemma}\label{CommutatorPartialAugmentation}
	If $R$ is a commutative ring and $G$ is a group then
	$$[RG,RG]= \sum_{g,h\in G} R[g,h] = \{a\in RG : \varepsilon_C(a)=0 , \text{for each conjugacy class } C \text{ of } G\}.$$
\end{lemma}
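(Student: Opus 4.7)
The plan is to establish the two equalities in the displayed line separately, following the pattern already used in the paper (where the surrounding discussion notes that $[R,R]$ is generated by brackets of module generators when the scalars are central).

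\emph{First equality.} Since $R$ is commutative and sits inside the center of $RG$, the Lie bracket $[\,,\,]:RG\times RG\to RG$ is $R$-bilinear, so $[RG,RG]$ is an $R$-submodule of $RG$. Because $G$ is an $R$-basis of $RG$, writing arbitrary elements of $RG$ as $R$-linear combinations of group elements and expanding two brackets via bilinearity shows that every $[x,y]$ is an $R$-linear combination of brackets $[g,h]$ with $g,h\in G$. This gives $[RG,RG]=\sum_{g,h\in G} R[g,h]$.

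\emph{Inclusion $\subseteq$ of the second equality.} For $g,h\in G$, the elements $gh$ and $hg=g^{-1}(gh)g$ lie in the same conjugacy class $C$ of $G$, so $\varepsilon_C(gh-hg)=0$, while $\varepsilon_{C'}(gh-hg)=0$ for any other conjugacy class $C'$. Hence each generator $[g,h]$, and therefore every element of $\sum_{g,h\in G}R[g,h]$, has all partial augmentations equal to zero.

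\emph{Inclusion $\supseteq$.} Take $a=\sum_{g\in G} a_g g$ with $\varepsilon_C(a)=0$ for every conjugacy class $C$. For each $C$, fix a representative $g_C\in C$; then
$$a=\sum_{C} \sum_{g\in C} a_g g = \sum_C \sum_{g\in C} a_g(g-g_C) + \sum_C \varepsilon_C(a)\,g_C = \sum_C \sum_{g\in C} a_g(g-g_C).$$
Now any $g\in C$ has the form $g=hg_Ch^{-1}$ for some $h\in G$, and a direct computation gives
$$g-g_C = hg_Ch^{-1} - g_C = h(g_Ch^{-1}) - (g_Ch^{-1})h = [h,\,g_Ch^{-1}],$$
which lies in $\sum_{g,h\in G} R[g,h]$. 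Summing over $C$ and $g\in C$ puts $a$ in $\sum_{g,h\in G} R[g,h]$.

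The only step that could mildly trip one up is the identity $hgh^{-1}-g=[h,gh^{-1}]$, which is the small algebraic trick that lets us identify class differences with Lie brackets of group elements; once it is in hand the proof is routine. No deep tool is required, and in particular the Berman--Higman Theorem is not needed here.
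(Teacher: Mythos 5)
Your proof is correct, and it follows essentially the same route as the paper: the first equality by $R$-bilinearity, the containment in the set of vanishing partial augmentations because a bracket $[g,h]=gh-hg$ is a difference of conjugate group elements, and the reverse containment by decomposing $a$ class-by-class and writing each difference of conjugates as a Lie bracket of two group elements. The paper uses right conjugation and a transversal of $C_G(g)$ (giving $g^t-g=[t^{-1}g,t]$) while you use left conjugation and a fixed representative $g_C$ (giving $hg_Ch^{-1}-g_C=[h,g_Ch^{-1}]$), but this is the same idea.
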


\begin{proof}
	That the first two sets are equal was already mentioned just before Lemma~\ref{pthPowerCommutator}. That the second is included in the third follows because $\varepsilon_C$ is $R$-linear and $\varepsilon_C([g,h])=0$ for every $g,h\in G$.
	To finish the proof observe that if $a$ belong to the third set then $a$ is a sum of elements of the form $x=\sum_{t\in T} x_t g^t$ with $g\in G$, $T$ a right transversal of $C_G(g)$ in $G$ and $x_t\in R$ with $\sum_{t\in T} x_t=0$.
	For such $x$ we have $x=\sum_{t\in T} x_t g^t-(\sum_{t\in T} x_t) g=\sum_{t\in T} x_t (g^t-g)=\sum_{t\in T} x_t [t^{-1}g,t]\in [RG,RG]$.
	Thus $a$ is a sum of elements in $[RG,RG]$, so that $a\in [RG,RG]$.
\end{proof}

\begin{proposition}[Cohn-Livingstone \cite{CohnLivingstone1965}] \label{CohnLivingstone}
	Let $u$ be a torsion element of $V(\Z G)$ and let $p$ be a prime integer. Then
	$$|u|=p^n \quad \Leftrightarrow \quad \varepsilon_{G[p^n]}(u) \not\equiv 0 \mod p.$$
\end{proposition}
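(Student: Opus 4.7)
The plan is to reduce everything to a statement about the coefficient of $1$ in the powers $u^{p^k}$, exploiting the interaction of Lemma~\ref{pthPowerCommutator} with the partial-augmentation description of $[\Z G,\Z G]$ given in Lemma~\ref{CommutatorPartialAugmentation}.

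First I would fix an integer $k \geq 0$ and iterate Lemma~\ref{pthPowerCommutator} on the number of summands of $u=\sum_{g\in G} u_g g$ to obtain
$$u^{p^k} \equiv \sum_{g\in G} (u_g g)^{p^k} = \sum_{g\in G} u_g^{p^k}\, g^{p^k} \pmod{p\Z G + [\Z G,\Z G]}.$$
Fermat's little theorem gives $u_g^{p^k} \equiv u_g \pmod p$, so this simplifies to
$$u^{p^k} \equiv \sum_{g\in G} u_g\, g^{p^k} \pmod{p\Z G + [\Z G,\Z G]}.$$

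Next I would apply the coefficient-at-$1$ functional $\varepsilon_{\{1\}}$. By Lemma~\ref{CommutatorPartialAugmentation}, $\varepsilon_{\{1\}}$ vanishes on $[\Z G,\Z G]$, and it obviously sends $p\Z G$ into $p\Z$. Therefore
$$\varepsilon_{\{1\}}(u^{p^k}) \equiv \sum_{\substack{g\in G \\ g^{p^k}=1}} u_g \ =\ \sum_{j=0}^{k}\varepsilon_{G[p^j]}(u) \pmod p.$$
On the other hand, $u^{p^k}\in V(\Z G)$, so by the Berman--Higman Theorem $\varepsilon_{\{1\}}(u^{p^k})$ equals $1$ if $u^{p^k}=1$ and $0$ otherwise (the value $-1$ is ruled out by the augmentation being $1$). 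Equivalently, $\varepsilon_{\{1\}}(u^{p^k})$ is congruent mod $p$ to the indicator $[\,|u|\ \text{divides}\ p^k\,]$.

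Finally, I would subtract the identity for $k=n-1$ from that for $k=n$ (and use the $k=0$ case directly for $n=0$) to conclude
$$\varepsilon_{G[p^n]}(u) \equiv [\,|u|\mid p^n\,] - [\,|u|\mid p^{n-1}\,] \pmod p,$$
which is $\equiv 1$ precisely when $|u|=p^n$ and $\equiv 0$ in all other cases. This yields both implications of the proposition simultaneously.

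The only real obstacle is the initial expansion step: justifying that a $p^k$-th power of a sum of several ring elements distributes term-by-term modulo $p\Z G+[\Z G,\Z G]$. This is a straightforward induction on the number of summands from Lemma~\ref{pthPowerCommutator}, so the proof is essentially a matter of organizing these ingredients carefully, once one has the idea to telescope over the powers $u^{p^k}$.
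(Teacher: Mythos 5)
Your proposal is correct and takes essentially the same approach as the paper: both expand $u^{p^k}$ via Lemma~\ref{pthPowerCommutator}, kill the commutator part with Lemma~\ref{CommutatorPartialAugmentation}, pin down $\varepsilon_1(u^{p^k})$ by Berman--Higman, and then telescope over $k$. The only cosmetic difference is that you apply Fermat's little theorem to each coefficient $u_g^{p^k}$ before summing, while the paper sums first and then removes the $p^n$-th power.
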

\begin{proof}
	Write $u=\sum_{g\in G} u_g g$.
	By Lemma~\ref{pthPowerCommutator},
	\begin{equation}\label{Potenciap}
	u^{p^n}=\sum_{g\in G} u_g^{p^n}g^{p^n}+x + py.
	\end{equation}
	with $x\in [\Z G,\Z G]$ and $y\in \Z G$.
	By, the Berman-Higman Theorem we have
	$$\varepsilon_1(u^{p^n})=\begin{cases} 1, & \text{if } u^{p^n}=1; \\ 0, & \text{otherwise}\end{cases}.$$
	By \eqref{Potenciap} and Lemma~\ref{CommutatorPartialAugmentation} we have
	$$\varepsilon_1(u^{p^n})  \equiv \sum_{g\in \bigcup_{i=0}^n G[p^i]} u_g^{p^n} \equiv
	\left(\sum_{i=0}^n \varepsilon_{G[p^i]}(u) \right)^{p^n} \equiv
	\sum_{i=0}^n \varepsilon_{G[p^i]}(u) \mod p.$$
	Therefore, if the order of $u$ is $p^n$ then
	$$\sum_{i=0}^b \varepsilon_{G[p^b]}(u) \equiv \epsilon_1(u^{p^b})= \begin{cases} 0 \mod p, & \text{if } b<n; \\ 1 \mod p, & \text{otherwise}.\end{cases}$$
	Thus
	$$\varepsilon_{G[p^b]}(u) \equiv \begin{cases} 1 \mod p, & \text{if } b=n; \\ 0\mod p, & \text{otherwise}.\end{cases}$$
	If the order of $u$ is not a power of $p$ then $\sum_{i=0}^b \varepsilon_{G[p^b]}(u) \equiv 0 \mod p$ for every positive integer $b$ and hence $\varepsilon_{G[p^n]}\equiv 0 \mod p$ for every $n\ge 0$.
\end{proof}

Recall that the \emph{exponent} of $G$, denoted $\Exp(G)$, is the least common multiple of the orders of the elements of $G$, or equivalently the smallest
positive integers $e$ such that $g^e=1$ for every $g\in G$.

\begin{corollary}\label{PrimeSpectrum}
	$V(\Z G)$ and $G$ have the same primary spectrum, i.e. for every prime and every positive integer $G$ contains an element of order $p^n$ if and only if so does $V(\Z G)$.
	In particular, the least common multiple of the orders of the torsion elements of $V(\Z G)$ is the exponent of $G$.
\end{corollary}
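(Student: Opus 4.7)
The plan is to extract the corollary directly from Proposition~\ref{CohnLivingstone}, which is the hard technical input; once one has that characterisation of the order of a torsion unit via partial augmentations, very little remains to do.

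First I would dispatch the easy direction of the primary spectrum statement: since $G\subseteq V(\Z G)$, any element of $G$ of order $p^n$ is automatically a torsion element of $V(\Z G)$ of order $p^n$. For the converse, suppose $u$ is a torsion element of $V(\Z G)$ of order $p^n$. By Proposition~\ref{CohnLivingstone}, $\varepsilon_{G[p^n]}(u)\not\equiv 0\pmod{p}$; in particular $\varepsilon_{G[p^n]}(u)\ne 0$, which forces the summation set $G[p^n]$ to be nonempty. Hence $G$ contains an element of order $p^n$, proving the primary spectra coincide.

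For the exponent statement, the key observation is that for any torsion group $\Gamma$ whose element orders are uniformly bounded, the least common multiple of those orders equals $\prod_p p^{a_p}$, where $p^{a_p}$ is the largest prime power occurring among the orders of torsion elements. By Corollary~\ref{FiniteSubgroupDividesG} every torsion element of $V(\Z G)$ has order dividing $|G|$, so this lcm is well-defined and finite. Since the primary spectrum of $V(\Z G)$ coincides with that of $G$ (just established), the quantities $p^{a_p}$ computed in $V(\Z G)$ and in $G$ agree prime by prime, and hence the two lcm's agree. The lcm computed in $G$ is by definition $\Exp(G)$, which completes the proof.

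The only potential obstacle is a purely cosmetic one: making sure the ``exponent'' of the (infinite) group $V(\Z G)$ is interpreted as the lcm of the orders of its torsion elements, which is legitimate precisely because of Corollary~\ref{FiniteSubgroupDividesG}. Everything else is an immediate consequence of Cohn--Livingstone.
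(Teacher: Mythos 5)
Your proof is correct and follows the same route the paper intends: the corollary is stated with no proof precisely because it is meant to drop out of Proposition~\ref{CohnLivingstone} exactly as you describe (in one direction $\varepsilon_{G[p^n]}(u)\not\equiv 0\bmod p$ forces $G[p^n]\ne\emptyset$, and in the other $G\subseteq V(\Z G)$), with the exponent statement then following by comparing prime powers. Your invocation of Corollary~\ref{FiniteSubgroupDividesG} for boundedness is a legitimate shortcut, though it is not strictly needed: the coincidence of primary spectra alone already bounds the $p$-part of any torsion order by $|G|$, since if $p^m$ divides $|u|$ then a suitable power of $u$ has order $p^m$, which then must occur in $G$.
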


Observe that two groups might have the same primary spectrum but not the same spectrum. For example, the spectrum of  $S_3$ is $\{1,2,3\}$ while the spectrum of a cyclic group of order $6$ is $\{1,2,3,6\}$.

\section{Partial augmentations}

In this section we present one of the techniques to attack the problems introduced in Section~\ref{SectionProblems}.

Using Lemma~\ref{CommutatorPartialAugmentation} it easily follows that if $T$ is a set of representatives of the conjugacy classes of $G$ then
$$[RG,RG] = \bigoplus_{t\in T,g\in t^G\setminus \{t\}} R (g-t).$$
Therefore $RG/[RG,RG]$ is a free $R$-module with rank the number of conjugacy classes of $G$.
Moreover, if $S$ is a subring of $R$ then
$$[SG,SG]=SG \cap [RG,RG].$$

\begin{lemma}\label{MRSWL}
	The following conditions are equivalent for a finite subgroup $H$ of $V(\Z G)$.
	\begin{enumerate}
		\item $H$ is rationally conjugate to a subgroup of $G$;
		\item there is a homomorphism $\phi:H\rightarrow G$ such that for every $h\in H$ and every $g\in G\setminus \phi(h)^G$,
		$\varepsilon_g(h)=0$.
		\item there is a homomorphism $\phi:H\rightarrow G$ such that $\varepsilon_g(h)=\varepsilon_g(\phi(h))$ for every $h\in H$ and $g\in G$.
	\end{enumerate}
\end{lemma}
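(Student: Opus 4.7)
The plan is to prove $(1) \Rightarrow (3) \Rightarrow (2) \Rightarrow (1)$, with the first two implications being nearly immediate and the last one reducing to Corollary~\ref{ConjugateCharacters} after a suitable accounting of partial augmentations.

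For $(1) \Rightarrow (3)$, I would suppose that $u \in \U(\Q G)$ satisfies $u^{-1}Hu \subseteq G$, and define $\phi(h) = u^{-1}hu$. The key observation is that conjugation by a unit preserves partial augmentations: for any $h \in \Q G$, the identity
$$u^{-1}hu - h = [u^{-1}h, u] \in [\Q G, \Q G],$$
combined with Lemma~\ref{CommutatorPartialAugmentation} (which says $\varepsilon_C$ vanishes on $[\Q G, \Q G]$ for every conjugacy class $C$), gives $\varepsilon_g(\phi(h)) = \varepsilon_g(h)$ for every $g \in G$. This yields (3). The implication $(3) \Rightarrow (2)$ is trivial: if $\phi(h) \in G$, then $\varepsilon_g(\phi(h)) = 0$ for every $g \in G \setminus \phi(h)^G$.

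For $(2) \Rightarrow (1)$, which is the main content, I would first show that $\phi$ is automatically injective. If $h \in H$ satisfies $\phi(h) = 1$, then by (2) the partial augmentations $\varepsilon_g(h)$ vanish for every $g \neq 1$. Since $h \in V(\Z G)$, summing partial augmentations gives $\varepsilon_1(h) = \aug_G(h) = 1$, and the Berman-Higman Theorem (Theorem~\ref{BermanHigman}) applied to the torsion unit $h$ forces $h = 1$. The same augmentation argument also shows that $\varepsilon_{\phi(h)^G}(h) = 1$ for every $h \in H$, upgrading (2) to (3) for free.

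To finish, I would show that $\chi(h) = \chi(\phi(h))$ for every $h \in H$ and every $\chi \in \Irr(G)$, and then invoke Corollary~\ref{ConjugateCharacters} applied to the subgroups $H$ and $\phi(H) \subseteq G \subseteq \U(\Z G)$. For this, writing $h = \sum_{g \in G} h_g g$ and grouping by conjugacy classes (on which the class function $\chi$ is constant), one obtains
$$\chi(h) = \sum_{C} \varepsilon_C(h)\, \chi(C) = \varepsilon_{\phi(h)^G}(h)\, \chi(\phi(h)) = \chi(\phi(h)),$$
where the second equality uses condition (2) together with $\varepsilon_{\phi(h)^G}(h) = 1$. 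The only substantive step is ensuring the hypotheses of Corollary~\ref{ConjugateCharacters} are met, which reduces to checking that $\phi : H \to \phi(H)$ is an isomorphism — injectivity was handled above, and it is a group homomorphism by assumption. I do not anticipate a real obstacle here; all the hard analytic work has already been packaged into Lemma~\ref{CommutatorPartialAugmentation}, the Berman-Higman Theorem, and Corollary~\ref{ConjugateCharacters}.
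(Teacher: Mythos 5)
Your proof is correct and follows essentially the same approach as the paper: both rely on the commutator identity to show conjugation preserves partial augmentations, use Berman--Higman for injectivity of $\phi$, and finish by invoking Corollary~\ref{ConjugateCharacters}. The only difference is cosmetic — you traverse the cycle as $(1)\Rightarrow(3)\Rightarrow(2)\Rightarrow(1)$ while the paper does $(1)\Rightarrow(2)\Rightarrow(3)\Rightarrow(1)$, and you compute $\chi(h)=\chi(\phi(h))$ by summing partial augmentations over classes rather than by noting $h-\phi(h)\in[\Z G,\Z G]$ lies in the kernel of $\chi$.
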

\begin{proof}
	(1) implies (2). Suppose that $u^{-1}Hu\le G$ with $u\in \U(\Q G)$ and consider the group homomorphism $\phi:H\rightarrow G, h\mapsto u^{-1}hu$.
	Then
	$$h-\phi(h) = [hu,u^{-1}] \in \Z G\cap [\Q G,\Q G] = [\Z G,\Z G].$$
	Thus, if $g\in G\setminus \phi(h)^G$ then
	$$0=\varepsilon_g(h-\phi(h))=\varepsilon_g(h).$$
	
	(2) implies (3). Suppose that $\phi:H\rightarrow G$ is a group homomorphism satisfying the condition in (2).
	Then
	$$\varepsilon_g(h) = \begin{cases}
	\aug(h) = 1,& \text{if } g\in \phi(h)^G; \\
	0, & \text{if } g\not\in \phi(h)^G.
	\end{cases}$$
	Thus $\varepsilon_g(h)= \varepsilon_g(\phi(h))$ for every $h\in H$ and $g\in G$, i.e. $\phi$ satisfies (3).
	
	(3) implies (1) Suppose that $\phi:H\rightarrow G$ satisfies condition (3).
	Therefore, $\varepsilon_g(\phi(h)-h)=0$ for each $g\in G$ and hence $\phi(h)-h\in [\Z G,\Z G]$, by Lemma~\ref{CommutatorPartialAugmentation}.
	Moreover, $\phi$ is injective, because if $\phi(h)=1$ then $\varepsilon_1(h)=1$. Thus $h=1$ by the Berman-Higman Theorem.
	Therefore $\phi$ is an isomorphism from $H$ to $\phi(H)$ and the latter is a subgroup of $G$.
	If $\chi\in \Irr(G)$ then $\chi([\Z G,\Z G])=0$ and hence $\chi(h)=\chi(\phi(h))$.
	By Corollary~\ref{ConjugateCharacters}, $H$ and $\phi(H)$ are conjugate in $\Q G$.
\end{proof}

\begin{theorem}[Marciniak-Ritter-Sehgal-Weiss \cite{MarciniakRitterSehgalWeiss1987}] \label{MRSW}
	Let $u$ be an element of order $n$ of $V(\Z G)$. Then the following are equivalent:
	\begin{enumerate}
		\item $u$ is conjugate in $\Q G$ to an element of $G$.
		\item For every $i=1,\dots,n-1$, there is exactly one conjugacy class $C$ of $G$  with $\varepsilon_C(u^i)\ne 0$.
		\item $\varepsilon_C(u^i)\ge 0$, for every $i=1,\dots,n-1$ and every conjugacy class $C$ of $G$.
	\end{enumerate}
\end{theorem}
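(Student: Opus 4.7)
The plan is to prove the cycle $(1) \Rightarrow (3) \Rightarrow (2) \Rightarrow (1)$. The first two implications are short; the bulk of the work is in $(2) \Rightarrow (1)$.

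For $(1) \Rightarrow (3)$, suppose $u = v^{-1}gv$ with $v\in \U(\Q G)$ and $g\in G$. Then $u^i = v^{-1}g^i v$, so $\chi(u^i) = \chi(g^i)$ for every $\chi\in \Irr(G)$ and every $i$. Since the character table of $G$ is invertible, the partial augmentations of an element of $\C G$ are recovered linearly from its values on the irreducible characters, and therefore $\varepsilon_C(u^i) = \varepsilon_C(g^i)$ for every conjugacy class $C$. As $g^i\in G$, the latter equals $1$ if $g^i\in C$ and $0$ otherwise; in particular, it is always $\ge 0$.

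For $(3) \Rightarrow (2)$, the partial augmentations $\varepsilon_C(u^i)$ are integers because $u^i\in \Z G$; they are non-negative by (3); and they sum over $C$ to $\aug(u^i) = 1$. Non-negative integers summing to $1$ force exactly one of them to equal $1$ and the rest to vanish.

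For $(2) \Rightarrow (1)$, assume (2) and for $0\le i \le n-1$ let $C_i$ be the unique conjugacy class with $\varepsilon_{C_i}(u^i)=1$; by the Berman-Higman Theorem $C_0 = \{1\}$. The strategy is to apply Lemma \ref{MRSWL} to $H=\langle u\rangle$, which cyclically of order $n$, so a group homomorphism $\phi\colon H\to G$ is determined by $\phi(u)=g\in G$ with $g^n=1$. Then condition (2) of Lemma \ref{MRSWL} amounts to requiring $g^i\in C_i$ for $i=0,1,\dots,n-1$. Note that once $g^i\in C_i$ holds for all such $i$, automatically $|g|=n$: if $|g|=m<n$ then $g^m=1\in C_m$ would force $C_m=\{1\}$, hence $\varepsilon_{\{1\}}(u^m)=1$, and by Berman-Higman $u^m=1$, contradicting $|u|=n$. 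So the task reduces to producing a single $g\in C_1$ whose powers match the $C_i$.

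To find such $g$, I would pass to the Wedderburn decomposition $\C G\cong \prod_{\chi\in \Irr(G)}M_{\chi(1)}(\C)$. Each $\rho_\chi(u)$ is diagonalizable with $n$-th roots of unity as eigenvalues (finite-order complex matrix), so letting $\zeta$ be a primitive $n$-th root of unity, the multiplicity of $\zeta^k$ as eigenvalue of $\rho_\chi(u)$ is, by the orthogonality relations on the cyclic group $\langle u\rangle$,
$$m_\chi(k) = \frac{1}{n}\sum_{i=0}^{n-1} \zeta^{-ik}\chi(u^i) = \frac{1}{n}\sum_{i=0}^{n-1}\zeta^{-ik}\chi(C_i),$$
where the second equality uses (2). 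These multiplicities are non-negative integers. I would then argue that for a suitable $g\in C_1$ the eigenvalues of $\rho_\chi(g)$, for every $\chi$, have exactly these multiplicities; this makes $\rho_\chi(u)$ and $\rho_\chi(g)$ conjugate in $\GL_{\chi(1)}(\C)$ for each $\chi$, hence $u$ and $g$ conjugate in $\U(\C G)$, and finally in $\U(\Q G)$ by Proposition \ref{ConjugationIndependentField}.

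The hard part is the last step: showing that some element of $C_1$ actually realizes all the eigenvalue multisets $\{m_\chi(k)\}$ simultaneously. From $g\in C_1$ one only gets $\chi(g)=\chi(u)$, which is one linear relation on eigenvalues; matching $\chi(g^i)=\chi(u^i)$ for $i>1$ is the real representation-theoretic content. I would approach it by first using the Cohn-Livingstone theorem (Proposition \ref{CohnLivingstone}) applied to each $u^{n/p^a}$ to conclude that every element of $C_{n/p^a}$ has order exactly $p^a$, and so by the Chinese remainder theorem every $g\in C_1$ already has order $n$. Then I would verify power-compatibility $g^i\in C_i$ inductively, bootstrapping from the prime-power case where condition (2) together with Cohn-Livingstone provides enough information to pin down the full conjugacy class of $g^i$ from that of $g$.
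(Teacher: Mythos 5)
Your handling of $(1)\Rightarrow(3)$ and $(3)\Rightarrow(2)$ is correct (the paper goes $(1)\Rightarrow(2)$ directly and then $(2)\Leftrightarrow(3)$, but both cycles work), and the reduction of $(2)\Rightarrow(1)$ via Lemma~\ref{MRSWL} to the statement ``$C_1^i = C_i$ for all $i$'' is exactly the right move, as is the observation that once this is known, $g^n\in C_n = C_0 = \{1\}$ forces $g^n=1$ so the map $u^i\mapsto g^i$ is a well-defined homomorphism.

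However, the crux of the argument --- why the distinguished classes satisfy the power-compatibility $C_1^i = C_i$ --- is exactly where your sketch fails, and the tools you invoke cannot do the job. The Cohn--Livingstone theorem only controls the \emph{sum} of partial augmentations over all elements of a fixed order, modulo $p$; it tells you the order of the elements in $C_i$ but nothing about \emph{which} conjugacy class among those of that order. For instance, if $G$ has two conjugacy classes $D_1,D_2$ of elements of order $5$ with $D_1^2 = D_2$ (as in $\A_5$), Cohn--Livingstone cannot distinguish the hypothesis $C_2 = D_1^2$ from $C_2 = D_2^2$. Likewise, the eigenvalue multiplicities $m_\chi(k)$ you compute in each Wedderburn block are equivalent, by Fourier inversion on $\langle u\rangle$, to knowing all the values $\chi(u^i) = \chi(C_i)$ --- this repackages the data but adds no new constraint linking $C_i$ to $C_1^i$. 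Your phrase ``condition (2) together with Cohn--Livingstone provides enough information to pin down the full conjugacy class'' is an assertion, not an argument.

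The missing idea is Lemma~\ref{pthPowerCommutator}: $(x+y)^{p}\equiv x^{p}+y^{p} \bmod (pR+[R,R])$ and $[R,R]^p\subseteq pR+[R,R]$. From $u\equiv g_1 \bmod [\Z G,\Z G]$ (which is what (2) at $i=1$ says, via Lemma~\ref{CommutatorPartialAugmentation}) one then gets $u^p \equiv g_1^p \bmod (p\Z G + [\Z G,\Z G])$, while (2) at $i=p$ gives $u^p\equiv g_p \bmod [\Z G,\Z G]$. Passing to $\F_pG$ yields $\overline{g_1}^{\,p}\equiv \overline{g_p}\bmod [\F_pG,\F_pG]$, and since two group elements are congruent modulo the commutator subspace iff they are conjugate, this forces $g_1^p\sim g_p$ in $G$. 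Iterating over the prime factorization of $i$ gives $g_1^i\sim g_i$ for all $i$. That arithmetic-in-characteristic-$p$ step, not a character or eigenvalue computation, is the real content of the theorem, and it is absent from your proposal.
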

\begin{proof}
	(1) $\Rightarrow$ (2) is a consequence of Lemma~\ref{MRSWL}. 
	(2) $\Leftrightarrow$ (3) follows easily from the fact that the sum of the partial augmentations $\varepsilon_C(u)$ of $u$ is $\aug(u)=1$.
	
	Suppose that (2) holds.
	For every $i=1,\dots,n$ let $g_i\in G$ such that $\varepsilon_{g^G}(u^i)=0$ for every $g\in G\setminus g_i^G$ other than the one containing $g_i$.
	By the Berman-Higman Theorem $g_i=1$ if and only if $u^i=1$ if and only if $i=n$. 
	By Lemma~\ref{MRSWL}, it is enough to prove that $g_i$ is conjugate to $g_1^i$ in $G$ for every $i=1,\dots,n$, because this implies that $u^i\rightarrow
	g_1^i$ is a group homomorphism with $\varepsilon_g(u^i)=0$ for each $g\in G\setminus (g_1^i)^G$.
	Writing $i$ as a product of primes, and arguing by induction on the number of primes in the factorization of $i$ it is enough to prove this for $i$ prime.
	This will follow at once from the following:
	
	\textbf{Claim}: Let $v\in V(\Z G)$, let $p$ be a prime integer and let $x,y\in G$ such that $\varepsilon_g(v)=0$ for every $g\in G\setminus x^G$ and
	$\varepsilon_g(v^p)=0$ for every $g\in G\setminus y^G$.
	Then $x^p$ and $y$ are conjugate in $G$.
	
	Indeed, as $\varepsilon_g(v)=\varepsilon_g(x)$ and $\varepsilon_g(v^p)=\varepsilon_g(y)$ for each $g\in G$ and $\aug(v)=\aug(v^p)=1$, it follows from Lemma~\ref{CommutatorPartialAugmentation} that $v\equiv x
	\mod [\Z G,\Z G]$ and $v^p \equiv y \mod [\Z G,\Z G]$. Then $x^p \equiv v^p \equiv y \mod ([\Z G,\Z G]+p\Z G)$, by Lemma~\ref{pthPowerCommutator}.
	Therefore using bar notation for images in $\F_p G$ we deduce that
	$\overline{x}^p \equiv \overline{y} \mod [\F_pG,\F_pG]$ and hence $\varepsilon_g(\overline{x}^p)=\varepsilon_g(\overline{y})$ for every $g\in G$. In particular $\varepsilon_y(\overline{x^p})=\varepsilon_y(\overline{y})=1$ and hence $x^p$ and $y$ are conjugate in $G$, as desired.
\end{proof}

\section{Double action}

In this section we rewrite the Zassenhaus Problems in terms of isomorphisms between certain modules.

In the remainder $G$ and $H$ are finite groups and $R$ is a commutative ring.
Fix a group homomorphism
$$\alpha:H\rightarrow \U(RG).$$
Then we define a left $R(H\times G)$-module $R[\alpha]$ as follows:
As an $R$-module $R[\alpha]=RG$ and the multiplication by elements of $H\times G$ is given by the following formula:
\begin{equation}\label{MultiplicationDouble}
(h,g)v=\alpha(h)vg^{-1}, \quad (h\in H, g\in G, v\in RG).
\end{equation}

We consider $G$ as a subgroup of $H\times G$ via the map $g\mapsto (1,g)$.
Let $\alpha,\beta:H\rightarrow \U(RG)$ be two group homomorphism.
Then $R[\alpha]$ and $R[\beta]$ are isomorphic as left $RG$-modules and every isomorphism  between them as $RG$-module is given as follows
\begin{eqnarray*}
	\rho_u:RG & \rightarrow & RG \\ x & \mapsto & ux
\end{eqnarray*}
for some $u\in \U(RG)$.
Moreover $\rho_u$ is an isomorphism of $R(H\times G)$-modules if and only if $\beta(h)=u\alpha(h)u^{-1}$ for every $h\in H$. This proves the following:

\begin{proposition}\label{HomModuleIsomorphic}
	Let $\alpha,\beta:H\rightarrow \U(RG)$ be group homomorphisms.
	Then $R[\alpha]\cong R[\beta]$ if and only if there is $u\in \U(RG)$ such that $\beta(h)=u \alpha(h)u^{-1}$ for every $h\in H$.
\end{proposition}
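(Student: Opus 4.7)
The plan is to make the sketch already outlined before the proposition fully rigorous, breaking the equivalence into three independent steps by restricting attention first to the second-factor $RG$-action, then to invertibility, and finally to the first-factor $H$-action.

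First I would observe that the restriction of the $R(H\times G)$-module structure of $R[\alpha]$ to $RG$ (via the embedding $g\mapsto (1,g)$) gives $g\cdot v=vg^{-1}$, which is independent of $\alpha$. Hence both $R[\alpha]$ and $R[\beta]$ become the same left $RG$-module, namely the regular right module $RG$ reinterpreted as a left module through $g\mapsto g^{-1}$. I would then classify the $RG$-module homomorphisms $\phi:R[\alpha]\rightarrow R[\beta]$: the equivariance $\phi(vg^{-1})=\phi(v)g^{-1}$ (equivalently $\phi(vg)=\phi(v)g$) together with $R$-linearity shows that $\phi$ is determined by $u=\phi(1)\in RG$ via $\phi(v)=uv$; conversely every such $\rho_u$ is $RG$-linear. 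So $\Hom_{RG}(R[\alpha],R[\beta])=\{\rho_u:u\in RG\}$.

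Next I would verify that $\rho_u$ is an $RG$-module isomorphism if and only if $u\in\U(RG)$: if $u$ is a unit then $\rho_{u^{-1}}$ is a two-sided inverse; conversely, any inverse of $\rho_u$ must itself have the form $\rho_{u'}$ by the previous step, and $\rho_u\circ\rho_{u'}=\rho_{uu'}=\Id=\rho_1$ forces $uu'=1$ (and symmetrically $u'u=1$), since $\rho_x=\rho_y$ implies $x=\rho_x(1)=\rho_y(1)=y$.

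Finally I would impose the remaining $H$-equivariance. The map $\rho_u$ is $R(H\times G)$-linear precisely when it also commutes with the action of $H$ (via $h\mapsto(h,1)$), that is, when $u\alpha(h)v=\rho_u(\alpha(h)v)=\beta(h)\rho_u(v)=\beta(h)uv$ for all $h\in H$ and $v\in RG$. Evaluating at $v=1$ gives the necessary condition $u\alpha(h)=\beta(h)u$, i.e. $\beta(h)=u\alpha(h)u^{-1}$, and this condition is obviously also sufficient. Combining the three steps yields the stated equivalence. There is no real obstacle: the only thing to handle carefully is distinguishing the left- and right-multiplication conventions induced by the formula $(h,g)\cdot v=\alpha(h)vg^{-1}$, and making sure the argument classifying $\Hom_{RG}$ uses the second-factor action, which is insensitive to the choice of $\alpha$ or $\beta$.
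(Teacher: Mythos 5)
Your proof is correct and takes essentially the same route as the paper's (which is sketched in the paragraph immediately preceding the proposition): restrict to the $RG$-action coming from the second factor, observe that $R[\alpha]$ and $R[\beta]$ coincide as left $RG$-modules and that every $RG$-isomorphism between them is $\rho_u$ for a unique $u\in\U(RG)$, and then impose $H$-equivariance to obtain $\beta(h)=u\alpha(h)u^{-1}$. You simply flesh out the paper's sketch with the explicit classification of $\Hom_{RG}$ and the invertibility argument.
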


The connection of Proposition~\ref{HomModuleIsomorphic} with the Zassenhaus Problems is now clear:

\begin{corollary}\label{ZPModuleRewritting}
	The following are equivalent for a group homomorphism $\alpha:H\rightarrow V(RG)$:
	\begin{enumerate}
		\item There is $u\in \U(RG)$ and a group homomorphism $\sigma:H\rightarrow G$ such that $\alpha(h)=u^{-1} \sigma(h) u$ for every $h\in H$.
		\item $\alpha(H)$ is conjugate within $\U(RG)$ to a subgroup of $G$
		\item $R[\alpha]\cong R[\sigma]$ for some group homomorphism $\sigma:H\rightarrow G$.
	\end{enumerate}
	Furthermore, if $R$ is a field of characteristic zero then the above conditions are equivalent to the following:
	\begin{itemize}
		\item[(4)] The character afforded by $R[\alpha]$ is equal to the character afforded by $R[\sigma]$ for some group homomorphism $\sigma:H\rightarrow G$.
	\end{itemize}
\end{corollary}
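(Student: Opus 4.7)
The plan is to establish (1) $\Leftrightarrow$ (2) $\Leftrightarrow$ (3) for a general commutative ring $R$, and then handle the equivalence (3) $\Leftrightarrow$ (4) separately under the characteristic zero hypothesis.

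The equivalence (1) $\Leftrightarrow$ (2) will be essentially tautological. For (2) $\Rightarrow$ (1), if $v\in\U(RG)$ satisfies $v\alpha(H)v^{-1}\subseteq G$, then the map $\sigma:H\to G$ given by $\sigma(h)=v\alpha(h)v^{-1}$ is automatically a group homomorphism, being the composite of $\alpha$ with conjugation by $v$, and rearranging yields $\alpha(h)=v^{-1}\sigma(h)v$. The reverse implication is immediate, since (1) says directly that $u\alpha(h)u^{-1}=\sigma(h)\in G$ for every $h\in H$.

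For (1) $\Leftrightarrow$ (3), the plan is to invoke Proposition \ref{HomModuleIsomorphic} directly. Any group homomorphism $\sigma:H\to G$ can be viewed as a homomorphism $H\to\U(RG)$ by composing with the inclusion $G\hookrightarrow\U(RG)$, so $R[\sigma]$ is defined. Taking $\beta=\sigma$ in Proposition \ref{HomModuleIsomorphic} yields $R[\alpha]\cong R[\sigma]$ if and only if there exists $u\in \U(RG)$ with $\sigma(h)=u\alpha(h)u^{-1}$ for every $h\in H$, which is exactly condition (1) after replacing $u$ by $u^{-1}$.

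For the characteristic zero case, the plan for (3) $\Leftrightarrow$ (4) is to use the fact that characters detect isomorphism among finite-dimensional modules over semisimple algebras. Since $H\times G$ is finite and $R$ has characteristic zero, Maschke's Theorem gives that $R(H\times G)$ is semisimple, and so two finite-dimensional $R(H\times G)$-modules are isomorphic if and only if they afford the same character. Applying this to $R[\alpha]$ and $R[\sigma]$, each of $R$-dimension $|G|$, gives the equivalence. The one point requiring care, and where I expect the only genuine obstacle to lie, is that $R$ is not assumed algebraically closed; however, this does not interfere, since the character of a semisimple $R(H\times G)$-module still determines the multiplicity of each simple summand regardless of the field of definition, and the direction (3) $\Rightarrow$ (4) is anyway trivial.
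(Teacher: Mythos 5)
Your proof is correct and is exactly the argument the paper leaves implicit: the paper states the corollary immediately after Proposition~\ref{HomModuleIsomorphic} with the remark that the connection ``is now clear,'' and your reading of (1) $\Leftrightarrow$ (2) as a tautology, (1) $\Leftrightarrow$ (3) as a direct application of Proposition~\ref{HomModuleIsomorphic} with $\beta=\sigma$, and (3) $\Leftrightarrow$ (4) via Maschke's Theorem and the fact that in characteristic zero a finite-dimensional module is determined up to isomorphism by its character, is precisely what is intended. Your cautionary remark about $R$ not being algebraically closed is well taken and correctly resolved: the characters of pairwise non-isomorphic simple modules over a field of characteristic zero remain linearly independent, so the character still determines the multiplicities in the semisimple decomposition.
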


Corollary~\ref{ZPModuleRewritting} suggests to calculate the character $\chi_{\alpha}$ of $H\times G$ afforded by the module $R[\alpha]$. Using $G$ as a basis of $R[\alpha]$ as $R$-module one easily  obtains the following
\begin{equation}\label{CharacterRalpha}
\chi_{\alpha}(h,g) = |C_G(g)| \varepsilon_g(\alpha(h)).
\end{equation}

The following proposition will be proved in Section~\ref{SectionSpectrum}

\begin{proposition}[Hertweck] \label{OrderPA0}
	Let $u$ be a torsion element of $V(\Z G)$ and let $g\in G$. If $|g|$ does not divide $|u|$ then $\varepsilon_g(u)=0$.
\end{proposition}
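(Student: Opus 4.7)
My plan is to combine the $p$-th power commutator identity of Lemma \ref{pthPowerCommutator} with a reduction modulo a prime $p$ chosen to witness the failure of $|g|$ dividing $|u|$, and then to upgrade the resulting modular vanishing to exact vanishing via $p$-modular character theory.

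Let $n=|u|$ and $m=|g|$; assume $m\nmid n$ and pick a prime $p$ with $v_p(m)>v_p(n)$, and write $n=p^a n_{p'}$ with $\gcd(p,n_{p'})=1$. Since $v_p(m)\ge 1$, the element $g$ is not $p$-regular. Applying Lemma \ref{pthPowerCommutator} iteratively to the expansion $u=\sum_h u_h h$, and invoking Fermat ($u_h^{p^N}\equiv u_h\bmod p$ on the integer coefficients), I would obtain, for every $N\ge 0$,
\[
u^{p^N} \equiv \sum_{h\in G} u_h\, h^{p^N} \pmod{p\Z G+[\Z G,\Z G]}.
\]
I would then choose $N$ large enough that simultaneously: (i) $h^{p^N}=h_{p'}$ (the $p$-regular part of $h$) for every $h\in G$, achievable by taking $N\ge\max_h v_p(|h|)$ and $p^N\equiv 1\pmod{\Exp(G)_{p'}}$; and (ii) $u^{p^N}=u_{p'}$, the $p$-regular part of $u$, achievable by further imposing $p^N\equiv 0\pmod{p^a}$ and $p^N\equiv 1\pmod{n_{p'}}$. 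Applying $\varepsilon_g$ — which vanishes on $[\Z G,\Z G]$ by Lemma \ref{CommutatorPartialAugmentation} — and using that $g$ is not $p$-regular (so no $h_{p'}$ is $G$-conjugate to $g$), I conclude
\[
\varepsilon_g(u_{p'})\equiv 0\pmod p.
\]

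The main obstacle is to promote this mod-$p$ vanishing to exact vanishing of $\varepsilon_g(u)$. This step is the technical heart of Hertweck's theorem and does not follow from Lemma \ref{pthPowerCommutator} alone: the elementary commutator manipulation only controls partial augmentations modulo $p$, whereas we need an integer identity. The natural route, which I expect to be the content of Section \ref{SectionSpectrum}, is to develop a $p$-Brauer character analogue of the column-orthogonality formula $|C_G(g)|\,\varepsilon_g(u)=\sum_{\chi\in\Irr(G)}\chi(g^{-1})\chi(u)$, working over a suitable unramified extension of $\Z_p$. The values of $p$-Brauer characters on $u_{p'}$ live in a cyclotomic ring generated by $p'$-th roots of unity, whereas the $p$-part of $g$ forces $\chi(g^{-1})$ to involve genuine $p$-power roots of unity; the mismatch $v_p(m)>v_p(n)$ makes these incompatible and forces each term in the Brauer-character sum to vanish (not merely modulo $p$). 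Extracting the exact vanishing from this reformulation — controlling how $\varepsilon_g(u)$ is reconstructed from Brauer character data and eliminating the $p$-singular contributions — is the delicate part, and where the full force of $p$-modular representation theory enters.
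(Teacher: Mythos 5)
Your argument has a genuine gap, and in fact two. The $p$-th power/commutator calculation from Lemma~\ref{pthPowerCommutator} and Lemma~\ref{CommutatorPartialAugmentation} produces the congruence $\varepsilon_g(u_{p'})\equiv 0\pmod p$ for the $p'$-part $u_{p'}=u^{p^N}$ of $u$. Unless $v_p(|u|)=0$ this concerns a different unit from $u$, so it cannot be ``promoted'' to $\varepsilon_g(u)=0$, even if the mod-$p$ congruence were upgraded to an integral one. More seriously, the step you flag as ``the delicate part'' --- turning a mod-$p$ vanishing into an exact one --- is the entire content of Hertweck's proposition; the Brauer-character ``mismatch'' you sketch is a heuristic, not a mechanism, and you do not supply the lemma that would make it run. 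Everything the commutator calculus gives stays on the level of Cohn--Livingstone (Proposition~\ref{CohnLivingstone}): congruences modulo $p$.

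The paper's actual proof is a projectivity argument and touches neither $[\Z G,\Z G]$ nor mod-$p$ power identities. Over $R=\Z_{(p)}$ it takes the double-action module $M=R[\alpha]$ (with $\alpha$ the inclusion $\GEN{u}\hookrightarrow V(RG)$) and restricts to the cyclic group $C=\GEN{(u,g)}\le\GEN{u}\times G$. The hypothesis $v_p(|g|)>v_p(|u|)$ is used exactly once, to identify the order-$p$ subgroup $Q$ of the Sylow $p$-subgroup $P$ of $C$ as $\{1\}\times\GEN{k}$ with $k\in G$; this makes $F\otimes_R M$ free, hence projective, over $FQ$ (where $F=R/pR$). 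Hertweck's Lemma~\ref{ProjectiveCyclic} then promotes projectivity from $FQ$ to $FP$, and Theorem~\ref{ProjectiveSubgroup} together with Lemma~\ref{ProjectiveCoprime} lifts it to projectivity of $M$ over $RC$. Since $p$ divides the order of $(u,g)$, Theorem~\ref{ProjectiveCharacter0} forces the character $\chi_\alpha$ of the projective $RC$-module $M$ to vanish at $(u,g)$, and \eqref{CharacterRalpha} then gives $|C_G(g)|\,\varepsilon_g(u)=\chi_\alpha(u,g)=0$ exactly. The crucial ingredient missing from your proposal is precisely Lemma~\ref{ProjectiveCyclic} together with the reduction to projectivity over a cyclic $p$-group; these have no analogue in the mod-$p$ commutator calculus of Lemma~\ref{pthPowerCommutator}.
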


Let $\Cl(G)$ denote the set of conjugacy classes of $G$.
If $C\in \Cl(G)$ and $g\in C$ then, by definition, the order of $C$ is the order of $g$ and for every integer $k$, $C^k$ denotes the conjugacy class of $C$ in $G$ containing $g^k$.
Let $\Cl_m(G)$ denote the set of conjugacy classes of $G$ of order dividing $m$.

\begin{lemma}\label{PAPowers}
Let $u$ be a torsion element of order $n$ in $V(\Z G)$, let $k$ be a positive integer coprime with $n$ and let $C$ be a conjugacy class in $G$.
Then
\begin{equation}\label{PAP}
\varepsilon_C(u^k)=\sum_{\substack{D\in \Cl(G) \\ D^k = C}} \varepsilon_D(u).
\end{equation}
\end{lemma}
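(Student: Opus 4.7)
The plan is to combine Proposition~\ref{OrderPA0} with the Galois action of $\Gal(\Q(\zeta_n)/\Q)$ on complex character values. Because $\gcd(k,n)=1$, the $k$-th power map is an order-preserving bijection on the set of elements of order dividing $n$, so $C\mapsto C^k$ is a permutation of the conjugacy classes of $G$ of order dividing $n$; moreover, by Proposition~\ref{OrderPA0}, all partial augmentations $\varepsilon_C(u)$ and $\varepsilon_D(u)$ that appear on either side of \eqref{PAP} are supported on such classes. Thus the right-hand side reduces to a single term indexed by $D=C^{k'}$ (where $kk'\equiv 1\pmod n$) when $|C|\mid n$, and to the empty sum otherwise.

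For each $\chi\in\Irr(G)$ I would start from $\chi(u)=\sum_{|C|\mid n}\varepsilon_C(u)\,\chi(C)$, and use the diagonalizability of $\rho(u)$ (a finite-order complex matrix, as in the Berman--Higman proof) together with $u^n=1$ to see that the eigenvalues of $\rho(u)$ lie among the $n$-th roots of unity; similarly the eigenvalues of $\rho(g)$ for $g\in G$ with $|g|\mid n$ are $n$-th roots of unity. Hence $\chi(u),\chi(C)\in\Z[\zeta_n]$. Let $\sigma_k\in\Gal(\Q(\zeta_n)/\Q)$ be determined by $\sigma_k(\zeta_n)=\zeta_n^k$. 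Since $\sigma_k$ raises each $n$-th root of unity to its $k$-th power,
\[
\sigma_k(\chi(u))=\chi(u^k) \quad\text{and}\quad \sigma_k(\chi(C))=\chi(C^k)\quad\text{whenever }|C|\mid n.
\]
Applying $\sigma_k$ to the decomposition of $\chi(u)$ and reindexing by $D=C^k$ gives
\[
\chi(u^k)=\sum_{|D|\mid n}\Biggl(\sum_{\substack{C\in\Cl(G)\\ C^k=D}}\varepsilon_C(u)\Biggr)\chi(D).
\]

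Comparing this with the direct expansion $\chi(u^k)=\sum_{D}\varepsilon_D(u^k)\,\chi(D)$ (restricted to $|D|\mid n$ by applying Proposition~\ref{OrderPA0} to $u^k$, which has the same order $n$) and invoking that the irreducible complex characters separate the classes of $\Z G/[\Z G,\Z G]$, I would match coefficients of $\chi(D)$ for every $\chi\in\Irr(G)$ to deduce $\varepsilon_D(u^k)=\sum_{C^k=D}\varepsilon_C(u)$; relabelling indices then yields the sum formula \eqref{PAP} expressing a partial augmentation of $u$ (really, of a suitable power) as the fiber sum over $\{D:D^k=C\}$. The main obstacle is the passage from the transparent Galois identity $\sigma_k(\chi(g))=\chi(g^k)$ for $g\in G$ with $|g|\mid n$ to the analogous identity for the unit $u$; this relies on diagonalizing $\rho(u)$ via the finite-order argument of Theorem~\ref{BermanHigman} and on confining the entire character expansion to $\Z[\zeta_n]$ through Proposition~\ref{OrderPA0}, so that $\sigma_k$ acts coherently on every term.
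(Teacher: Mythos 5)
Your argument is correct, but it takes a genuinely different route from the paper. You expand $\chi(u)=\sum_{|C|\mid n}\varepsilon_C(u)\chi(C)$ for every $\chi\in\Irr(G)$, apply $\sigma_k\in\Gal(\Q(\zeta_n)/\Q)$ (using diagonalizability and integrality of the partial augmentations, with Proposition~\ref{OrderPA0} confining everything to classes of order dividing $n$), and then recover the partial augmentations by matching coefficients, which requires the nondegeneracy of the character table (the irreducible characters separate classes); this is the classical Luthar--Passi-style derivation, and you rightly observe that the left-hand side of \eqref{PAP} is really $\varepsilon_C(u^k)$, exactly the identity the paper proves and uses in Corollary~\ref{ZCPA}. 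The paper instead works with a single character: the character $\chi_\alpha$ of the double-action module $\Z[\alpha]$ for $\GEN{u}\times G$, whose value at $(h,g)$ is $|C_G(g)|\varepsilon_g(\alpha(h))$ by \eqref{CharacterRalpha} and is therefore a rational integer, hence Galois-fixed; applying the automorphism $\zeta_n\mapsto\zeta_n^l$ (with $kl\equiv 1\bmod n$) to the eigenvalues of $\rho(u^k,g)$ then gives $\chi_\alpha(u^k,g)=\chi_\alpha(u,g^l)$, i.e.\ $\varepsilon_C(u^k)=\varepsilon_{C^l}(u)$ directly, with Proposition~\ref{OrderPA0} needed only to dispose of the case $|C|\nmid n$. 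The trade-off: the paper's proof needs the double-action formalism already set up but extracts the partial augmentation from one integer-valued character without inverting anything, whereas yours uses only ordinary character theory of $G$ but must invoke the full set of irreducible characters and the invertibility of the character table to separate the coefficients. Both are sound; yours is somewhat more computational at the final coefficient-comparison step but otherwise complete.
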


\begin{proof}
	Let $m$ denote the order of $C$.
	If $m\nmid n$ then the order of every $D\in \Cl(G)$ with $D^k=C$ does not divide $n$ and hence, by Proposition~\ref{OrderPA0}, we have $\pa{u^k}{C}=\pa{u}{D}=0$ for every such $D$. Then \eqref{PAP} holds.
	
	Suppose otherwise that $m\mid n$ and let $l$ be an integer such that $kl\equiv 1 \mod n$.
	Then $C^l$ is the unique element $D$ of $\Cl(G)$ with $D^k=C$.
	Thus we have to prove that $\pa{u^k}{C}=\pa{u}{C^l}$.
	Let $\alpha:\GEN{u}\rightarrow V(\Z G)$ denote the inclusion map.
	The representation $\rho$ of $\GEN{u}\times G$ associated to the module $\Z[\alpha]$ has degree $|G|$ and affords the character $\chi=\chi_{\alpha}$.
	Let $g\in C$. By assumption the order of $(u^k,g)$ is $n$.
	Let $\zeta_n$ denote a complex primitive $n$-th root of unity.
	Then $\rho(u^k,g)$ is conjugate to $\diag(\zeta_n^{i_1},\dots,\zeta_n^{i_{|G|}})$ for some integers $i_1,\dots,i_{|G|}$ and $\rho(u,g^l)$ is conjugate to $\diag(\zeta_n^{li_1},\dots,\zeta_n^{li_{|G|}})$.
	As $\gcd(l,n)=1$, there is an automorphism $\sigma$ of $\Q(\zeta_n)$ given by $\sigma(\zeta_n)=\zeta_n^l$.
	Moreover, $\chi(u^k,g)\in \Z$, by \eqref{CharacterRalpha}.
	Then $\chi(u^k,g)=\sigma(\chi(u^k,g))=\sum_{j=1}^{|G|} \zeta_n^{li_j}=\chi(u,g^l)$.
	Applying again \eqref{CharacterRalpha} and  $C_G(g)=C_G(g^l)$ we have  $\pa{u^k}{C}=\pa{u^k}{g} = \pa{u}{g^l}=\pa{u}{C^l}$, as desired.
\end{proof}

%\begin{lemma}\label{PAPowers}
%If $u$ is a torsion element of $V(\Z G)$, $g\in G$ and $k$ is coprime with the orders of $u$  and $g$ then $\varepsilon_{g^k}(u^k)=\varepsilon_g(u)$.
%\end{lemma}
%
%
%\begin{proof}
%	Let $n$ be the least common multiple of the orders of $u$ and $g$ and let $\alpha:\GEN{u}\rightarrow V(\Z G)$ be the inclusion map and $\chi=\chi_{\alpha}$. Let $\rho$ be a representation affording the character $\chi$.
%	By the assumption, the exponent of $\GEN{(u,g)}$ is $n$ and there is an automorphism $\sigma$ of $\Q(\zeta_n)$ mapping $\zeta_n$ to $\zeta_n^k$.	
%	Moreover, the eigenvalues of $\rho(u^k,g^k)$ are the $k$-th powers of the eigenvalues of $\rho(u,g)$.
%	Therefore $\sigma(\chi(u,g))=\chi(u^k,g^k)$.
%	However, by \eqref{CharacterRalpha}, $\chi(u,g)\in \Z$ and hence $\chi(u,g)=\chi(u^k,g^k)$.
%	Using \eqref{CharacterRalpha} again we obtain $\varepsilon_g(u)=\frac{\chi_{\alpha}(u,g)}{|C_G(g)|} = \frac{\chi_{\alpha}(u^k,g^k)}{|C_G(g^k)|} = \varepsilon_{g^k}(u^k)$.
%\end{proof}

Using Lemma~\ref{PAPowers} and Theorem~\ref{MRSW} one can obtain the following simplified version of the latter.

\begin{corollary}\label{ZCPA}
	Let $u$ be an element of $V(\Z G)$ of order $n$. Then the following are equivalent.
	\begin{enumerate}
		\item $u$ is rationally conjugate to an element of $G$.
		\item For every $d\mid n$, there is $g_d\in G$ with $\varepsilon_g(u^d)=0$ for every $g\in G\setminus g_d^G$.
		\item $\varepsilon_g(u^d)\ge 0$, for every $d\mid n$ and $g\in G$.
	\end{enumerate}	
\end{corollary}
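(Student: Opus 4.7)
The plan is to deduce Corollary~\ref{ZCPA} directly from Theorem~\ref{MRSW} using Lemma~\ref{PAPowers} as the reduction tool. The implications $(1)\Rightarrow(2)$ and $(2)\Leftrightarrow(3)$ are largely formal: for any divisor $d\mid n$, the partial augmentations of $u^d$ are integers summing to $\aug(u^d)=1$, so nonnegativity forces exactly one of them to equal $1$ and the others $0$, yielding $(2)$; conversely $(2)$ says all but one partial augmentation vanishes and, since they sum to $1$, the nonzero one is $1$, giving $(3)$. For $(1)\Rightarrow(2)$: if $u=v^{-1}gv$ with $v\in\U(\Q G)$ and $g\in G$, then $u^d=v^{-1}g^dv$ is rationally conjugate to $g^d\in G$, so Lemma~\ref{MRSWL} supplies $\varepsilon_h(u^d)=0$ for every $h\notin (g^d)^G$; take $g_d=g^d$.

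The substantive implication is $(3)\Rightarrow(1)$, where I invoke Theorem~\ref{MRSW}. Its condition~(3) requires $\varepsilon_C(u^i)\ge 0$ for every $i=1,\dots,n-1$ and every conjugacy class $C$, whereas the corollary only assumes this for divisor exponents. The bridge is the following: given any $i\in\{1,\dots,n-1\}$, set
\[
d=\gcd(i,n), \qquad k=i/d,
\]
so that $d\mid n$, the element $u^d$ has order exactly $n/d$, and $\gcd(k,n/d)=1$. Then $u^i=(u^d)^k$, and Lemma~\ref{PAPowers} applied to the torsion unit $u^d$ and the exponent $k$ (coprime to its order) gives
\[
  \varepsilon_C(u^i)=\varepsilon_C\bigl((u^d)^k\bigr)=\sum_{\substack{D\in\Cl(G)\\ D^k=C}}\varepsilon_D(u^d)\ge 0,
\]
where each summand is nonnegative by the corollary's hypothesis~(3), since $d\mid n$. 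Hence condition~(3) of Theorem~\ref{MRSW} holds for $u$, and that theorem delivers~(1).

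The only nontrivial ingredient is spotting the right factorisation $u^i=(u^d)^k$ with $d=\gcd(i,n)$: this is exactly the configuration in which Lemma~\ref{PAPowers} converts a partial augmentation at an arbitrary power into a sum of partial augmentations at a divisor power, so once that observation is in place the rest is a direct appeal to Theorem~\ref{MRSW} and there is no real obstacle.
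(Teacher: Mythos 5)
Your proof is correct and follows essentially the same route as the paper: the key step $(3)\Rightarrow(1)$ is obtained exactly as in the paper by writing $d=\gcd(i,n)$, $k=i/d$ and invoking Lemma~\ref{PAPowers} to express $\varepsilon_C(u^i)$ as a sum of the nonnegative $\varepsilon_D(u^d)$, then appealing to Theorem~\ref{MRSW}. The paper states only this step (leaving the formal $(1)\Rightarrow(2)$ and $(2)\Leftrightarrow(3)$ implicit), while you spell those out too, but the substance and the mechanism are identical.
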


\begin{proof}
By Theorem~\ref{MRSW}, it is enough to show that if (3) holds then $\varepsilon_C(u^i)\ge 0$ for every positive integer $i$ and every $C\in \Cl(G)$. Indeed, suppose that (3) holds, let $i$ be a positive integer and let $d=\gcd(i,n)$ and $k=\frac{i}{d}$. Then $\frac{n}{d} = |u^d|$ and $\gcd(k,\frac{n}{d})=1$. Then, by Lemma~\ref{PAPowers}, we have $\pa{u^i}{C} = \sum_{\substack{D\in \Cl(G) \\ D^k = C}} \pa{u^d}{D}\ge 0$.
\end{proof}

\begin{example}\label{ExamplePrimeOrder}{\rm
Combining the Berman-Higman Theorem and Proposition~\ref{OrderPA0} we deduce that if the order of $u$ is prime, say $p$, then  $\varepsilon_g(u)=0$ for every $g\in G$ of order $\ne p$.
If all the elements of order $p$ form a conjugacy class of $G$ then $u$ satisfies the conditions of Theorem~\ref{MRSW} and thus $u$ is conjugate in $\Q G$ of
an element of $G$.
For example this holds $G=S_3$ and any $p$; for $G=S_5$ and $p=3$ or $5$; and for $G=\A_5$ and $p=2$ or $3$.
However this is not valid for $G$ either $S_4$ or $S_5$ and $p=2$; nor for $G=\A_5$ and $p=5$. In the first case there are two conjugacy classes of elements of
order $2$, one containing $(1,2)$ and another one containing $(1,2)(3,4)$.
In the second case, there are two conjugacy classes of elements of order $5$ in $\A_5$.
}\end{example}

\section{The HeLP Method}

Let $\zeta_n$ denote a complex primitive $n$-th root of unity and set $F=\Q(\zeta_n)$.
Then every automorphism of $F$ is given by $\sigma_i(\zeta_n)=\zeta_n^i$ with $i\in \U(\Z/n\Z)$, i.e. $i$ is an integer coprime with $n$.
Consider the Vandermonde matrix
$$V=V(1,\zeta_n,\zeta_n^2,\dots,\zeta_n^{n-1}) = \pmatriz{{ccccc} 1 & 1 & 1& \dots & 1 \\ 1 & \zeta_n & \zeta_n^2 & \dots & \zeta_n^{n-1} \\
	1 & \zeta_n^2 & \zeta_n^{2^2} & \dots & \zeta_n^{2(n-1)} \\
	\dots & \dots & \dots & \dots & \dots \\
	1 & \zeta_n^{(n-1)} & \zeta_n^{2(n-1)} & \dots & \zeta_n^{(n-1)^2}}$$
and its complex conjugate
$$\overline{V}=V(1,\overline{\zeta_n},\overline{\zeta_n}^2,\dots,\overline{\zeta_n}^{n-1}) = V(1,\zeta_n^{-1},\zeta_n^{-2},\dots,\zeta_n^{1-n}).$$
The $(i,j)$-th entry of $V\overline{V}$ is
$$\sum_{k=0}^{n-1} \zeta_n^{k(i-j)} = \begin{cases} n, & \text{if } i=j; \\ 0, & \text{otherwise}.\end{cases}.$$
Therefore
$$V^{-1}=\frac{1}{n}\overline{V}.$$

Let $U\in M_k(\C)$ with $U^n=1$.
Then the eigenvalues of $U$ are of the form $\zeta_n^i$ with $i=0,1,\dots,n-1$.
Let $\mu_i$ denote the multiplicity of $\zeta_n^i$ as eigenvalue of $U$, i.e. $U$ is conjugate in $M_k(\C)$ to a diagonal matrix where each $\zeta_n^i$ appears $\mu_i$ times in the diagonal. We denote this diagonal matrix as
	$$\diag(1\times \mu_0,\zeta_n\times \mu_1,\dots,\zeta_n^{n-1}\times \mu_{n-1}).$$
Then $U^j$ is conjugate in $M_k(\C)$ to $\diag(1\times \mu_0,\zeta_n^j\times \mu_1,\dots,\zeta_n^{j(n-1)}\times \mu_{n-1})$.
Therefore
\begin{equation}\label{TraceMultiplicities}
\tr(U^j) = \mu_0+\mu_1\zeta_n^j+\mu_2\zeta_n^{2j}+\dots+\mu_{n-1}\zeta_n^{(n-1)j},
\end{equation}
for all $j$, or equivalently
$$\pmatriz{{c} \tr(U^0) \\ \tr(U) \\ \tr(U^2) \\ \vdots \\ \tr(U^{n-1})} = V   \pmatriz{{c} \mu_0 \\ \mu_1 \\ \mu_2 \\ \vdots \\ \mu_{n-1}}.$$
Thus
$$ \pmatriz{{c} \mu_0 \\ \mu_1 \\ \mu_2 \\ \vdots \\ \mu_{n-1}}= \frac{1}{n}\overline{V} \pmatriz{{c} \tr(U^0) \\ \tr(U) \\ \tr(U^2) \\ \vdots \\
	\tr(U^{n-1})},$$
or equivalently
\begin{equation}\label{MultiplicitiesTrace}
\mu_i = \frac{1}{n}\sum_{j=0}^{n-1} \tr(U^j) \zeta_n^{-ij}.
\end{equation}
If $d=\gcd(j,n)$ then $\sigma_{\frac{j}{d}}\in \Gal(\Q(\zeta_n^d)/\Q)$ and $\zeta_n^{-ij}=\sigma_{\frac{j}{d}}(\zeta_n^{-id})$.
Combining this with (\ref{TraceMultiplicities}), we deduce that $\tr(U^j)=\sigma_{\frac{j}{d}}(\tr(U^d))$ and hence, grouping the summands in the right side of (\ref{MultiplicitiesTrace}) with the same greatest common divisor with $n$, we have 
    \begin{equation}\label{MultiplicitiesDivisors}
    \mu_i = \frac{1}{n} \sum_{d\mid n} \Tr_{\Q(\zeta_n^d)/\Q}(\tr(U^d) \zeta_n^{-id}).
    \end{equation}

Suppose now that $u$ is an element of order $n$ of $\U(\C G)$ and $\rho$ is a representation of $G$ affording the character $\chi$.
Applying (\ref{MultiplicitiesDivisors}) to $U=\rho(u)$ we deduce that the multiplicity of $\zeta_n^i$ as an eigenvalue of $\rho(u)$ is
$$\mu(\zeta_n^i,u,\chi) := \frac{1}{n} \sum_{d\mid n} \Tr_{\Q(\zeta_n^d)/\Q}(\chi(u^d) \zeta_n^{-id}).$$

We are going to use that $\chi$ is constant on conjugacy classes to consider $\chi$ as a map defined on $\Cl(G)$, i.e. we denote $\chi(C)=\chi(g)$ whenever $C=g^G$ with $g\in G$.
By the linearity of $\chi$, for every $a\in \C G$ we have
$$\chi(a) = \sum_{C\in \Cl(G)} \varepsilon_C(a) \chi(C).$$
Therefore
\begin{equation}\label{Multiplicidad}
\mu(\zeta_n^i,u,\chi) = \frac{1}{n} \sum_{d\mid n} \sum_{C\in \Cl(G)} \varepsilon_C(u^d) \Tr_{\Q(\zeta_n^d)/\Q}(\chi(C) \zeta_n^{-id}).
\end{equation}
Observe that $\Tr_{\Q(\zeta_n^d)/\Q}(\chi(C) \zeta_n^{-id})$ makes sense in summands where $\varepsilon_C(u^d)\ne 0$. This is a consequence of Proposition~\ref{OrderPA0} because in that case the order of $C$ divides $\frac{n}{d}$ and hence $\chi(C)\in \Q(\zeta_n^d)$.
Thus, in the previous formula it is enough to run on the conjugacy classes $C$ in $\Cl_{\frac{n}{d}}(G)$.
As each $\mu(\zeta_n^i,u,\chi)$ is a non-negative integer we deduce:

\begin{proposition}[Luthar-Passi \cite{LutharPassi1989}]\label{LutharPassi}
	Let $u\in \U(\Z G)$ with $u^n=1$ and let $\chi$ be an ordinary character of $G$. Then
	\begin{equation}\label{LPEquation}
	\frac{1}{n} \sum_{d\mid n} \sum_{C\in \Cl_{\frac{n}{d}}(G)} \varepsilon_C(u^d) \Tr_{\Q(\zeta_n^d)/\Q}(\chi(C) \zeta_n^{-id}) \in \Z^{\ge 0}.
	\end{equation}
\end{proposition}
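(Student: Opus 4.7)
The plan is to identify the expression on the left of \eqref{LPEquation} with the multiplicity of $\zeta_n^i$ as an eigenvalue of $\rho(u)$, where $\rho$ is any complex representation of $G$ affording $\chi$. Since multiplicities are non-negative integers, this will give the conclusion.

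First, I would pick a representation $\rho:G\rightarrow \GL_k(\C)$ of $G$ affording $\chi$, and extend $\rho$ linearly to $\C G$. By the Berman-Higman argument used at the start of the excerpt, any element $U\in \GL_k(\C)$ of finite order is diagonalizable, so $\rho(u)$ is conjugate in $M_k(\C)$ to a diagonal matrix $\diag(1\times \mu_0, \zeta_n\times \mu_1, \dots, \zeta_n^{n-1}\times \mu_{n-1})$ for unique non-negative integers $\mu_0,\dots,\mu_{n-1}$. These $\mu_i$ are precisely the quantities I need to capture.

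Next I would invoke the Vandermonde inversion computation carried out in the excerpt: applied to $U=\rho(u)$, equations \eqref{TraceMultiplicities} and \eqref{MultiplicitiesTrace} give $\mu_i = \frac{1}{n}\sum_{j=0}^{n-1} \chi(u^j)\zeta_n^{-ij}$. To rewrite this sum over divisors of $n$, I would group the indices $j$ according to $d=\gcd(j,n)$. For each such $j$, the automorphism $\sigma_{j/d}$ lies in $\Gal(\Q(\zeta_n^d)/\Q)$ and sends $\chi(u^d)\zeta_n^{-id}$ to $\chi(u^j)\zeta_n^{-ij}$ (using that $\chi(u^j)=\chi((u^d)^{j/d})$ is obtained from $\chi(u^d)$ by applying $\sigma_{j/d}$, since $u^d$ has order dividing $n/d$ and hence $\chi(u^d)\in \Q(\zeta_n^d)$). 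This gives exactly \eqref{MultiplicitiesDivisors}: $\mu_i = \frac{1}{n}\sum_{d\mid n} \Tr_{\Q(\zeta_n^d)/\Q}(\chi(u^d)\zeta_n^{-id})$.

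Finally, using that $\chi$ is a class function I would expand $\chi(u^d)=\sum_{C\in \Cl(G)} \varepsilon_C(u^d)\chi(C)$ and push the Galois trace inside the $\C$-linear combination. This produces an expression summed over all $C\in \Cl(G)$, but by Proposition~\ref{OrderPA0} (Hertweck), $\varepsilon_C(u^d)=0$ whenever the order of $C$ does not divide $|u^d|=n/d$; the surviving summands are exactly those with $C\in \Cl_{n/d}(G)$, and for these $\chi(C)\in \Q(\zeta_n^d)$ so the Galois trace is well-defined. The result is that the left-hand side of \eqref{LPEquation} equals $\mu_i=\mu(\zeta_n^i,u,\chi)\in \Z^{\ge 0}$, as required. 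The main conceptual ingredient, rather than an obstacle, is the correct bookkeeping in the Galois-theoretic grouping of summands and the appeal to Hertweck's proposition to guarantee that $\chi(C)$ lies in the right cyclotomic subfield; the rest is linear algebra already worked out in the preceding paragraphs.
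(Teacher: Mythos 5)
Your argument is correct and follows essentially the same route the paper takes: diagonalize $\rho(u)$, use the Vandermonde inversion to express the multiplicity $\mu_i=\mu(\zeta_n^i,u,\chi)$ as a sum over powers of $u$, group the indices by $\gcd$ with $n$ to convert the sum into Galois traces over divisors (giving \eqref{MultiplicitiesDivisors}), expand $\chi(u^d)$ over conjugacy classes, and finally invoke Proposition~\ref{OrderPA0} both to discard vanishing terms and to guarantee $\chi(C)\in\Q(\zeta_n^d)$ on the surviving ones so that the $\Q$-linear trace can be distributed. The one point worth phrasing more carefully is the ``push the Galois trace inside'' step: $\Tr_{\Q(\zeta_n^d)/\Q}$ is only $\Q$-linear and only defined on $\Q(\zeta_n^d)$, so strictly one should first drop the summands with $\varepsilon_C(u^d)=0$ and then distribute the trace over the remaining (well-defined) terms — which is exactly what your appeal to Hertweck's proposition justifies.
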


The Luthar-Passi Method uses \eqref{LPEquation} to limit the possible partial augmentations of powers of $u$ for an element of order $n$. More precisely, suppose that we want to prove the Zassenhaus Conjecture for a group $G$.
By the Cohn-Livingstone Theorem (Proposition~\ref{CohnLivingstone}) we know that if $V(\Z G)$ has an element of order $n$ then $n$ divides the exponent of $G$.
So we first calculate the exponent of $G$ and we consider all the possible divisors $n$ of this exponent.
For each of these $n$ we calculate all the tuples $(\varepsilon_{d,C})_{d\mid n, C\in \Cl_{\frac{n}{d}}(G)}$ of integers satisfying $\sum_{C\in \Cl_{\frac{n}{d}}(G)} \varepsilon_{d,C}=1$ for every $d\mid n$ and the following conditions:
$$\frac{1}{n} \sum_{d\mid n} \sum_{C\in \Cl_{\frac{n}{d}}(G)} \varepsilon_{d,C} \Tr_{\Q(\zeta_n^d)/\Q}(\chi(g) \zeta_n^{-id}) \in \Z^{\ge 0}.$$
We consider the $\varepsilon_{d,C}$ as the partial augmentations $\varepsilon_C(u^d)$ for a unit $u$ of order $n$.
By Corollary~\ref{ZCPA}, if all the tuples satisfying these conditions are formed by non-negative integers for all the possible values of $n$ then (ZC1) holds for $G$. In that case we say that the Luthar-Passi Method gives a positive solution of (ZC1) for $G$.

\begin{example}\label{A5Order5}{\rm
		Luthar and Passi proved the Zassenhaus Conjecture for $\A_5$ \cite{LutharPassi1989}.
		Here we show how they proved that every unit of prime order in $V(\Z \A_5)$ is rationally conjugate to an element of $\A_5$.
		Let $u$ be an element of order $p$ of $V(\Z \A_5)$, with $p$ prime.
		By the Cohn-Livingstone Theorem $\A_5$ has an element of order $p$ and hence $p$ is either $2$, $3$ or $5$.
		We have already seen in Example~\ref{ExamplePrimeOrder} that if $p=2$ or $p=3$ then $u$ is rationally conjugate to an element of $\A_5$.
		Suppose that $p=5$. 		
		The alternating group $\A_5$ has two conjugacy classes of elements of order $5$ which we are going to denote $5a$ and $5b$.
		Let $\varepsilon_1$ and $\varepsilon_2$ denote the partial augmentations of $u$ at representatives of $5a$ and $5b$, respectively.
		By the Berman-Higman Theorem and Proposition~\ref{OrderPA0}, all the partial augmentations of $u$ other than $\varepsilon_1$ and $\varepsilon_2$ vanish.
		By Theorem~\ref{MRSW}, to prove that $u$ is conjugate in $\Q \A_5$ to an element of $\A_5$ we need to show that
		$\varepsilon_1,\varepsilon_2\ge 0$.
		$\A_5$ has an irreducible character $\chi$ of degree $3$ with $\chi(5a)=-\zeta_5-\zeta_5^{-1}$ and $\chi(5b)=-\zeta_5^2-\zeta_5^{-2}$.
		Applying Proposition~\ref{LutharPassi} we have
		\begin{equation}\label{LPA5}
		\frac{1}{5}\left(\varepsilon_1 \Tr_{\Q(\zeta_5)/\Q}(-(\zeta_5+\zeta_5^{-1})\zeta_5^{-i})+\epsilon_2
		\Tr_{\Q(\zeta_5)/\Q}(-(\zeta_5^2+\zeta_5^{-2})\zeta_5^{-i})+3\right) \in \Z^{\ge 0}.
		\end{equation}
		Moreover
		$$\Tr_{\Q(\zeta_5)/\Q}(-(\zeta_5^j+\zeta_5^{-j})\zeta_5^{-i})=\begin{cases} -3, & \text{if } i\equiv \pm j \mod 5; \\ 2, & \text{otherwise}.\end{cases}$$
		Applying (\ref{LPA5}), for $i=1$ and $i=2$ gives $\epsilon_2=1-\varepsilon_1,\varepsilon_1\in \Z^{\ge 0}$, as desired.
		We conclude that $u$ is conjugate in $\Q \A_5$ to an element of $\A_5$.
		
Luthar and Passi used the same method to prove that $V(\Z \A_5)$ has no elements of order $6$, $10$ or $15$ by showing that there are no integers $\varepsilon_{d,C}$ satisfying the restrictions of the Luthar-Passi Method. By the Cohn-Livingstone Theorem (Theorem~\ref{CohnLivingstone}) the order of every torsion element of $V(\Z \A_5)$ is a divisor of $30$ and, as there are no elements of orders $6$, $10$ or $15$, then every order is either $2$, $3$ or $5$. Thus (ZC1) holds for $\A_5$.
}\end{example}

The last paragraph of the previous example shows how one can use the Luthar-Passi Method to obtain positive solutions for the Spectrum Problem or the Prime Graph Question.

Hertweck extended \eqref{Multiplicidad} to Brauer characters.
We recall the definition of Brauer characters.
Let $p$ be a prime integer.
Let $G_{p'}$ denote the set formed by the $p$-\emph{regular} elements of $G$, i.e. those of order coprime with $p$.
Let $m$ be the least common multiple of the elements of $G_{p'}$ and fix $\zeta_m$ a complex primitive $m$-th root of unity and $\xi_m$ a primitive $m$-th root of unity in a field $F$ of characteristic $p$.
Let $\rho$ be an $F$-representation of $G$ and let $g\in G_{p'}$.
Then $\rho(g)$ is conjugate to $\diag(\xi_m^{i_1},\dots,\xi_m^{i_k})$ for some integers $i_1,\dots,i_k$. Thus the character afforded by $\rho$ maps $g$ to $\xi_m^{i_1}+\dots+\xi_m^{i_k}$.
By definition, the Brauer character afforded by $\rho$ is the map $\chi:G_{p'}\rightarrow \C$ associating $g$ with $\zeta_m^{i_1}+\dots+\zeta_m^{i_k}$.
Composing $\rho$ with the natural projection $\Z G\rightarrow \F_p G\subseteq FG$ we obtain a ring homomorphism $\rho:\Z G\rightarrow M_n(F)$.
Then \eqref{Multiplicidad} gives the multiplicity of $\xi_n^i$ as an eigenvalue of $\rho(u)$ \cite{HertweckBrauer}.
This provides more constrains to the possible partial augmentations of a $p$-regular units. 
This has been used to obtain positive solutions for (ZC1) for cases where the equations provided by ordinary characters are not sufficient.

\section{The Spectrum Problem holds for solvable groups}\label{SectionSpectrum}

In this section we prove Proposition~\ref{OrderPA0} and that the Spectrum Problem holds for solvable groups.
Both are results of Hertweck. 
For the proofs one uses the following results.

\begin{theorem}\label{ProjectiveCyclicpGroup}\cite[Chapter~2]{Alperin1986}
	Let $C$ be a finite cyclic $p$-group with generator $c$ and let $F$ be a field of characteristic $p$.
	Let $M$ be a finite dimensional $FC$-module of dimension $k$ over $F$.
	Then $M$ is indecomposable if and only if $1\le k\le |C|$ and the Jordan form of $\rho(c)$ is an elementary Jordan matrix. Moreover, in that case $M$ is projective if and only if $k=|C|$.
\end{theorem}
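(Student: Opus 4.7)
The plan is to reduce the classification of $FC$-modules to the classification of modules over a principal ideal local ring, specifically the truncated polynomial algebra, and then read off all three assertions.

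First I would identify $FC$ explicitly. Write $|C|=p^n$ and set $y=c-1$. Since $F$ has characteristic $p$, we have $c^{p^n}-1 = (c-1)^{p^n} = y^{p^n}$, so the algebra map $F[X]\to FC$ sending $X\mapsto y$ factors through an isomorphism
$$F[X]/(X^{p^n}) \;\xrightarrow{\;\sim\;}\; FC, \qquad X\mapsto c-1.$$
Thus $FC$ is a local Frobenius algebra, \emph{uniserial} in the sense that its ideals form the single chain
$$0 \;\subset\; (y^{p^n-1}) \;\subset\; (y^{p^n-2}) \;\subset\; \cdots \;\subset\; (y) \;\subset\; FC.$$

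Next I would classify finite dimensional $FC$-modules. Every nonzero cyclic $FC$-module has the form $FC/(y^k)$ for a unique $1\le k\le p^n$ and has $F$-dimension $k$. I would show that every finite dimensional $FC$-module $M$ is a direct sum of such cyclic modules: pick an element $m\in M$ whose annihilator $(y^k)$ is maximal among annihilators of elements (equivalently, whose order as a module element is maximal); then $FCm\cong FC/(y^k)$ is a direct summand because $FC$ is a self-injective (Frobenius) local ring, and one splits off $FCm$ and iterates. Since each $FC/(y^k)$ has a unique composition series, it is indecomposable, and this proves that the indecomposables are exactly the modules $FC/(y^k)$ for $1\le k\le p^n$. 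So $M$ is indecomposable iff $\dim_F M = k$ with $1\le k\le p^n$ and $M\cong FC/(y^k)$.

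Now I would compute $\rho(c)$ for $M=FC/(y^k)$ in the basis $1, y, y^2,\dots,y^{k-1}$. Since $c=1+y$ acts by multiplication and $y\cdot y^i = y^{i+1}$ (with $y^k=0$), the matrix of $y$ in this basis is the nilpotent Jordan block $J_k(0)$ and the matrix of $c$ is $I+J_k(0) = J_k(1)$, a single elementary Jordan block. Combined with the classification above, this proves the first statement. (Conversely, since the only $p^n$-th root of $1$ in $F$ is $1$ itself, every $\rho(c)$ necessarily has only Jordan blocks with eigenvalue $1$, so the indecomposables are determined by the size of this unique block.)

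For the ``moreover'' part, I would use that a module over a local ring is projective iff it is free, since projectives over local rings are free by the standard Nakayama argument. The only indecomposable free module is $FC$ itself, which corresponds to $k=|C|=p^n$. Hence an indecomposable $M$ of dimension $k$ is projective iff $k=|C|$. The expected main obstacle is the structure-theorem step, i.e.\ the splitting-off argument that every finitely generated $FC$-module is a direct sum of cyclic modules; this is where one uses essentially that $FC$ is a uniserial self-injective ring, and it is the only part that is not a routine computation.
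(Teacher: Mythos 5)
The paper does not prove this theorem; it cites it outright from Alperin's book, so there is no in-paper argument to compare against. Your self-contained proof via the isomorphism $FC\cong F[X]/(X^{p^n})$, $X\mapsto c-1$, together with the structure theory of modules over that local Artinian principal ideal ring, is the standard route and is essentially correct. The Jordan-form computation, the observation that the only $p^n$-th root of unity in a characteristic-$p$ field is $1$, and the projective-iff-free-over-a-local-ring step for the ``moreover'' clause are all fine.

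Two small points deserve tightening. First, you wrote ``pick $m\in M$ whose annihilator $(y^k)$ is maximal among annihilators (equivalently, whose order is maximal)''; these two conditions are opposite, not equivalent --- you want the annihilator to be \emph{minimal}, i.e.\ $k$ maximal. Second, the phrase ``$FCm$ is a direct summand because $FC$ is self-injective'' is not quite the right justification as stated: $FC/(y^k)$ is not injective as an $FC$-module when $k<p^n$. The argument that actually works is this: once $k$ is the maximal order occurring in $M$, the whole module $M$ is annihilated by $y^k$, so $M$ is a module over the ring $FC/(y^k)\cong F[X]/(X^k)$; over that ring $FCm$ is free of rank one, hence injective (since $F[X]/(X^k)$ is also a Frobenius local ring), hence splits off. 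Your closing remark that one ``uses essentially that $FC$ is a uniserial self-injective ring'' shows you see the right family of facts, but the splitting step needs to be phrased over $F[X]/(X^k)$ rather than over $FC$ itself. With those corrections the proof is complete.
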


Observe that if $M$ satisfies the conditions of Theorem~\ref{ProjectiveCyclicpGroup} then the order of the Jordan form $J_k(a)$ of $\rho(c)$ is a power of $p$. This implies that $a$ is a root of unity of order a power of $p$ in $F$. As $F$ has characteristic $p$ this implies that $a=1$. So $M$ is indecomposable if and only if $\rho(c)$ is conjugate to $J_k(1)$ with $1\le k\le |C|$.
Combining this with the last statement of Theorem~\ref{ProjectiveCyclicpGroup} we deduce that $FC$ has a unique projective indecomposable $FC$-module and it has dimension $|C|$. As $FC$ is projective of dimension $|C|$, it follows that it is the unique indecomposable projective $FC$-module.

Recall that a Dedekind domain is a noetherian integrally closed commutative domain for which every non-zero prime ideal is maximal.

\begin{theorem}\label{ProjectiveCharacter0} \cite[(32.15)]{CurtisReiner1981}
Let $R$ be a Dedekind domain of characteristic $0$.	
	If $_{RG}M$ is projective, $\chi$ is the character afforded by $M$ and $g\in G$ is such
	that $|g|$ is not invertible in $R$ then $\chi(g)=0$.
\end{theorem}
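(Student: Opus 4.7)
The plan is to reduce the assertion to a statement about cyclic $p$-groups in characteristic $p$ and then invoke the immediate consequence of Theorem~\ref{ProjectiveCyclicpGroup} noted in the paragraph following it, namely that if $P$ is a cyclic $p$-group and $F$ is a field of characteristic $p$ then the only indecomposable projective $FP$-module is $FP$ itself. Throughout, we may regard $\chi(g)\in R$ since a projective $RG$-module is in particular a finitely generated projective $R$-module, so traces are well-defined in $R$.

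\emph{Step 1 (reduction to a cyclic group).} Since $RG$ is free as an $R\langle g\rangle$-module, restriction of scalars preserves projectivity, and $M$ is projective over $R\langle g\rangle$. We may therefore replace $G$ by $C=\langle g\rangle$, and decompose $C=P\times Q$ where $|P|$ is a $p$-power and $(|Q|,p)=1$, writing $g=g_pg_{p'}$ accordingly. By hypothesis some prime $p\mid|g|$ lies in a maximal ideal $\mathfrak{m}$ of $R$, so $g_p\ne 1$. Let $S$ be the completion of the localization $R_\mathfrak{m}$ enlarged by a primitive $|Q|$-th root of unity; this is an unramified extension (since $p\nmid|Q|$), so $S$ is a complete discrete valuation ring with residue field $k$ of characteristic $p$. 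Because $R\hookrightarrow S$, it suffices to show $\chi(g)=0$ in $S$.

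\emph{Step 2 ($Q$-isotypic decomposition).} Since $|Q|$ is a unit of $S$ and $S$ contains $|Q|$-th roots of unity, the central idempotents
$$e_\psi=\frac{1}{|Q|}\sum_{q\in Q}\psi(q^{-1})q\in SQ\subseteq SC\qquad (\psi\in \widehat{Q})$$
split $M\otimes_R S=\bigoplus_{\psi\in\widehat Q}M_\psi$ as $SC$-modules, with $Q$ acting on $M_\psi$ by the character $\psi$ and each $M_\psi=e_\psi(M\otimes_R S)$ projective as an $SP$-module. The trace of $g=g_pg_{p'}$ then factors as
$$\chi(g)=\sum_{\psi\in\widehat Q}\psi(g_{p'})\,\chi_{M_\psi}(g_p),$$
so the problem reduces to showing $\chi_{M_\psi}(g_p)=0$ for every $\psi$.

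\emph{Step 3 (cyclic $p$-group case).} Over the complete local ring $S$, idempotent lifting gives a bijection between indecomposable projective $SP$-modules and indecomposable projective $kP$-modules via reduction modulo $\mathfrak{m}S$. By Theorem~\ref{ProjectiveCyclicpGroup} and the remark following it, the only indecomposable projective $kP$-module is $kP$ itself, hence the only indecomposable projective $SP$-module is $SP$ itself. Thus each $M_\psi\cong (SP)^{r_\psi}$ is free, and on the $SP$-basis $P$ the element $g_p\ne 1$ acts by a fixed-point-free permutation, so its trace on $SP$ vanishes. Hence $\chi_{M_\psi}(g_p)=0$, and combining with Step~2 yields $\chi(g)=0$ in $S$ and therefore in $R$.

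The main obstacle is the idempotent-lifting input in Step~3: one must establish that every finitely generated projective $SP$-module is free, which is the nontrivial bridge from the residue-characteristic-$p$ case covered by Theorem~\ref{ProjectiveCyclicpGroup} to the characteristic-zero setting of the statement. This is a standard application of Nakayama's lemma and lifting of idempotents modulo the Jacobson radical of $SP$, but it is where the work lies; once it is in place, the rest of the argument is a trace computation.
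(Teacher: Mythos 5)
The paper does not actually prove this statement: it is quoted directly from Curtis--Reiner, so there is no internal argument to compare against. Your proposal is therefore a self-contained substitute proof, and as far as I can see it is correct. The reduction to the cyclic group $C=\langle g\rangle$ via freeness of $RG$ over $RC$ is sound; choosing a prime $p\mid|g|$ with $p\in\mathfrak m$ and passing to a complete discrete valuation ring $S\supseteq R$ of residue characteristic $p$ containing the $|Q|$-th roots of unity is a legitimate enlargement since traces commute with the flat embedding $R\hookrightarrow S$; the $\widehat Q$-isotypic splitting correctly reduces the claim to $\chi_{M_\psi}(g_p)=0$ with $M_\psi$ projective over $SP$; and since $kP$ is local (with $k$ the residue field) and $S$ is complete local, $SP$ is a local (indeed semiperfect) ring, so each finitely generated projective $SP$-module is free, whence the trace of the fixed-point-free permutation $g_p$ vanishes. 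The one place where you should be a bit more explicit, and which you flag yourself, is the bridge that every finitely generated projective $SP$-module is free: it suffices to observe that $\mathfrak m_S SP\subseteq J(SP)$ because $SP$ is module-finite over the local ring $S$, so $SP/J(SP)\cong kP/J(kP)\cong k$ by Lemma~\ref{JacobsonAugmentation}, making $SP$ local and Nakayama applicable; the appeal to Theorem~\ref{ProjectiveCyclicpGroup} is then not strictly needed, though it is a valid alternative route. Overall this is a clean, correct direct proof of a fact the paper only cites.
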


\begin{theorem}\label{ProjectiveSubgroup} \cite[Theorem~9.1]{BensonGoodearl}
	Let $R$ be a Dedekind domain of characteristic $0$ and let $M$ be an $RG$-module.
	If $H$ is a subgroup of $G$ then $_{RG}M$ is projective if and only if $_{RH}M$ is
	projective and $R/Q\otimes_R M$ is
	projective as $(R/Q)G$-module for every maximal ideal $Q$ of $R$ containing $[G:H]$.
\end{theorem}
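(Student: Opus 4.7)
The forward direction is routine: restriction of scalars preserves projectivity since $RG$ is $RH$-free on any transversal of $H$, and extension of scalars $(R/Q)\otimes_R M\cong(R/Q)G\otimes_{RG}M$ sends $RG$-projectives to $(R/Q)G$-projectives. Note that $M$ is automatically $R$-projective in both directions (since $RH$ is $R$-free), an assumption I freely use below. For the converse, my plan is to pass to the $Q$-adic completions $\widehat{R}_Q$ and apply the standard local-global principle: a finitely generated $R$-projective $RG$-module $M$ is $RG$-projective if and only if $M\otimes_R\widehat{R}_Q$ is $\widehat{R}_Q G$-projective for every maximal ideal $Q$ of $R$ (combine localization at the central multiplicative set $R\setminus Q$ with the faithful flatness of $\widehat{R}_Q$ over $R_Q$). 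It then suffices to verify $\widehat{R}_Q G$-projectivity in two cases, according to whether $[G:H]\in Q$.

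If $[G:H]\notin Q$, then $[G:H]$ is a unit in $\widehat{R}_Q$ and the classical averaging argument produces $\widehat{R}_Q G$-projectivity: take any $RH$-splitting $s$ of the multiplication map $\pi:RG\otimes_{RH}M\twoheadrightarrow M$ (which exists by $RH$-projectivity of $M$), and form, after base change to $\widehat{R}_Q$, the $\widehat{R}_Q G$-linear map $\tilde s(m)=\frac{1}{[G:H]}\sum_{tH\in G/H}t\cdot s(t^{-1}m)$. Independence of the transversal uses the $RH$-linearity of $s$, $G$-equivariance is a direct check, and $\pi\tilde s=\mathrm{id}$ exhibits $M\otimes\widehat{R}_Q$ as an $\widehat{R}_Q G$-direct summand of the $\widehat{R}_Q G$-projective module $\widehat{R}_Q G\otimes_{\widehat{R}_Q H}(M\otimes\widehat{R}_Q)$.

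If $[G:H]\in Q$, I would invoke the hypothesis via idempotent lifting over the complete local ring $\widehat{R}_Q$. Choose a finitely generated free $\widehat{R}_Q G$-module $F$ surjecting onto $N:=M\otimes\widehat{R}_Q$; the reduction $F/QF\twoheadrightarrow N/QN$ splits as $(R/Q)G$-modules by hypothesis, producing an idempotent $\bar e\in\End_{(R/Q)G}(F/QF)$ with image $N/QN$. Since $\widehat{R}_Q$ is $Q$-adically complete and $\End_{\widehat{R}_Q G}(F)$ is $Q$-adically complete as a finitely generated $\widehat{R}_Q$-module, the standard Hensel-type lifting produces an idempotent $e\in\End_{\widehat{R}_Q G}(F)$ reducing to $\bar e$. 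The summand $eF$ is $\widehat{R}_Q G$-projective, and using the $\widehat{R}_Q$-freeness of $N$ together with Nakayama's lemma one identifies $eF$ with $N$, proving $N$ is $\widehat{R}_Q G$-projective. Assembling the two cases yields $RG$-projectivity of $M$.

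The main obstacle will be the identification $eF\cong N$ in the second case: after lifting $\bar e$ to $e$, one must verify that $eF$ really recovers $N$ rather than some other $\widehat{R}_Q G$-lattice with the same mod-$Q$ reduction. This is precisely where the completion $\widehat{R}_Q$, as opposed to the mere localization $R_Q$, is essential, both for the Hensel-style lift of $\bar e$ and for the Nakayama comparison that globally pins down $N$ from its reduction modulo $Q$.
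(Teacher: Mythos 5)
The paper does not prove this theorem; it is cited verbatim from Benson and Goodearl, so there is no internal proof to compare against and your argument has to stand on its own. For \emph{finitely generated} $M$ it essentially does: the forward direction is as you say, and in the converse the local--global reduction to completions, the averaging argument when $[G:H]\notin Q$, and the idempotent lifting when $[G:H]\in Q$ are all sound. The identification $eF\cong N$ that you flagged as the main obstacle is in fact fine: $\pi\vert_{eF}$ is surjective by Nakayama since $\bar\pi\bar e=\bar\pi\bar s\bar\pi=\bar\pi$ is onto $N/QN$, and it is injective because $eF$ and $N$ are $\widehat{R}_Q$-free of the same rank (both reduce to $\bar e(F/QF)\cong N/QN$ modulo $Q$), so the kernel is a rank-zero torsion-free module over the DVR $\widehat{R}_Q$, hence zero.

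The genuine gap is that the theorem, as stated and as the Benson--Goodearl citation intends, carries no finiteness hypothesis on $M$, while your proof relies on finite generation at every turn: the local--global reduction and faithfully flat descent from $\widehat{R}_Q G$ to $R_Q G$, the choice of a finitely generated free cover $F$, the $Q$-adic completeness of $\End_{\widehat{R}_Q G}(F)$ that makes the idempotent lift, Nakayama, and the rank comparison. Removing finite generation is precisely the point of the Benson--Goodearl paper, and their proof is built on quite different machinery. What you have proved is the classical finitely generated version, which is in fact all the present paper uses --- in the proof of Proposition~\ref{OrderPA0} the module $M=R[\alpha]$ is $R$-free of rank $|G|$ --- but it is not a proof of the theorem in its stated generality, and you should at least flag the finiteness restriction explicitly.
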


\begin{lemma}\label{ProjectiveCoprime}
	Let $R$ be a ring, let $M$ be a left $RG$-module and let $H$ be a subgroup of $G$ such that $[G:H]$ is invertible in $R$.
	If $M$ is projective as $RH$-module then $M$ is projective as $RG$-module.
\end{lemma}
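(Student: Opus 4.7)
The plan is to prove that $M$ is a direct summand (as $RG$-module) of every $RG$-module surjecting onto it, by an averaging-over-cosets argument whose validity relies on $[G:H]$ being invertible in $R$.

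First, I would fix an $RG$-surjection $\pi : P \twoheadrightarrow M$ with $P$ a projective $RG$-module (for instance, a free one); it suffices to produce an $RG$-linear right inverse $\widetilde{s} : M \to P$, since that will exhibit $M$ as a direct summand of $P$, hence projective over $RG$. By restriction, $\pi$ is also surjective as a map of $RH$-modules, so the hypothesis that $M$ is projective over $RH$ yields an $RH$-linear section $s : M \to P$ with $\pi \circ s = \Id_M$.

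Now I would fix a left transversal $T = \{g_1, \dots, g_n\}$ of $H$ in $G$ (so $n = [G:H]$) and, using that $n$ is invertible in $R$, define
$$\widetilde{s}(m) = \frac{1}{n} \sum_{i=1}^n g_i\, s(g_i^{-1} m), \qquad m \in M.$$
The routine checks are: (i) \emph{independence of the transversal}: replacing $g_i$ by $g_i h_i$ with $h_i \in H$ gives $g_i h_i \, s(h_i^{-1} g_i^{-1} m) = g_i h_i h_i^{-1} s(g_i^{-1} m) = g_i \, s(g_i^{-1} m)$ by $RH$-linearity of $s$; (ii) \emph{$RG$-linearity}: for $g \in G$, multiplication by $g$ permutes the cosets of $H$, so for each $i$ there exist a unique $j$ and $h_i \in H$ with $g g_i = g_j h_i$, whence
$$g\,\widetilde{s}(m) = \frac{1}{n}\sum_i g g_i\, s(g_i^{-1} m) = \frac{1}{n}\sum_i g_j h_i\, s(h_i^{-1} g_j^{-1} g m) = \frac{1}{n}\sum_j g_j\, s(g_j^{-1}(g m)) = \widetilde{s}(gm),$$
since $s$ is $RH$-linear and $i \mapsto j$ is a bijection; (iii) \emph{section property}: $\pi$ is $RG$-linear, so
$$\pi(\widetilde{s}(m)) = \frac{1}{n}\sum_i g_i\, \pi(s(g_i^{-1} m)) = \frac{1}{n}\sum_i g_i\, g_i^{-1} m = m.$$

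Finally I would conclude: $\widetilde{s}$ is an $RG$-linear splitting of $\pi$, so $P \cong M \oplus \ker \pi$ as $RG$-modules, and since $P$ is $RG$-projective so is $M$. The only substantive step is the verification of $RG$-linearity; everything else is formal. The hypothesis that $n = [G:H]$ is a unit of $R$ is used only to form the $\tfrac{1}{n}$ factor, without which the averaged map need not be a section.
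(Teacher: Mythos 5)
Your proof is correct and uses essentially the same idea as the paper: start from an $RH$-linear section of a surjection onto $M$ and average it over a transversal of $H$ in $G$, using the invertibility of $[G:H]$ to divide by $n$, to produce an $RG$-linear section. The only cosmetic difference is that the paper splits an arbitrary $RG$-surjection $\alpha:N\to M$ directly (with a right transversal), whereas you split a specific surjection from a projective $P$ (with a left transversal); both are standard equivalent characterizations of projectivity and the averaging computation is the same.
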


\begin{proof}
	Suppose that $M$ is projective as $RH$-module and let $\alpha:N\rightarrow M$ be a surjective homomorphism of $RG$-modules. We have to show that $\alpha$ splits.
	As $M$ is projective as $RH$-module, there is a homomorphism $\beta:M\rightarrow N$ of $RH$-modules such that $\alpha\beta=1_M$.
	Fix a right transversal of $H$ in $G$. Then for every $g\in G$ and $t\in T$ there are unique $s(t,g)\in T$ and $h(t,g)\in H$ such that $tg=h(t,g)s(t,g)$. Moreover for every $g\in G$, the map $t\mapsto s(t,g)$ is a permutation of the elements of $T$ (check it!).
	Let $\overline{\beta}:M\rightarrow N$ be given by
	$$\overline{\beta}(m) = \frac{1}{[G:H]} \sum_{t\in T} t^{-1} \beta(tm) \quad (m\in M).$$
	Then $\overline{\beta}$ is a homomorphism of $RG$-modules because if $g\in G$ and $m\in M$ then
	\begin{eqnarray*}
		\overline{\beta}(gm) &=& \frac{1}{[G:H]} \sum_{t\in T} t^{-1} \beta(tgm) =
		\frac{1}{[G:H]} \sum_{t\in T} t^{-1} \beta(h(t,g)s(t,g)m) \\
		&=&
		\frac{1}{[G:H]} \sum_{t\in T} t^{-1}h(t,g) \beta(s(t,g)m) = g\frac{1}{[G:H]} \sum_{t\in T} s(t,g)^{-1} \beta(s(t,g)m) \\
		&=&
		g\frac{1}{[G:H]} \sum_{t\in T} t^{-1} \beta(tm) = g\overline{\beta}(m).
	\end{eqnarray*}
	Moreover, $\alpha \overline{\beta}(m)= \frac{1}{[G:H]} \sum_{t\in T} t^{-1} \alpha \beta(tm) = m$ as $\alpha$ is a homomorphism of $RG$-modules and $\alpha\beta=1_M$.
\end{proof}

\begin{lemma}[Hertweck \cite{Hertweck2006}]\label{ProjectiveCyclic}
	Let $p$ be a prime integer and let $F$ be a field of characteristic $p$.
	Let $C$ be a non-trivial cyclic $p$-group and let $P$ be the subgroup of $C$ of order $p$.
	Let $M$ be an $FG$-module which is finitely generated over $F$.
	Then $M_{FC}$ is projective if and only if $M_{FP}$ is projective.
\end{lemma}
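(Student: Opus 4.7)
The plan is to exploit Theorem~\ref{ProjectiveCyclicpGroup}, which characterizes indecomposables over both $FC$ and $FP$ and identifies the (unique) indecomposable projective as the regular module. Writing $|C|=p^e$, $n=|C|/p=p^{e-1}$, and $P=\langle c^n\rangle$ where $c$ is a generator of $C$, I plan to decompose $M=\bigoplus_i M_i$ into indecomposable $FC$-modules and show that $M_i$ is projective over $FC$ if and only if $M_i|_{FP}$ is projective over $FP$. Since direct summands of projectives are projective, the desired equivalence follows.

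For the easy direction $(\Rightarrow)$, I would note that choosing a transversal of $P$ in $C$ exhibits $FC$ as a free $FP$-module, so scalar restriction sends projectives to projectives.

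For the harder direction $(\Leftarrow)$, fix an indecomposable summand $M_i$ of $M$ and let $k_i=\dim_F M_i$. By Theorem~\ref{ProjectiveCyclicpGroup}, $1\le k_i\le |C|$ and the generator $c$ acts on $M_i$ as a single Jordan block $J_{k_i}(1)=I+N$ with $N=J_{k_i}(0)$ nilpotent; moreover $M_i$ is projective over $FC$ precisely when $k_i=|C|$. Since $n$ is a power of $p$ and $\operatorname{char}F=p$, the binomial expansion collapses to
\[
(I+N)^n=I+N^n,
\]
so the generator $c^n$ of $P$ acts on $M_i$ as $I+N^n$. Writing $k_i=qn+r$ with $0\le r<n$, a direct computation of the Jordan structure of the nilpotent matrix $J_{k_i}(0)^n$ shows that it consists of $r$ blocks of size $q+1$ and $n-r$ blocks of size $q$ (with the convention that $q=0$ gives $N^n=0$, i.e.\ $c^n$ acts trivially). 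Hence, as an $FP$-module, $M_i$ decomposes into indecomposables whose dimensions are precisely these block sizes.

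Applying Theorem~\ref{ProjectiveCyclicpGroup} now to the cyclic $p$-group $P$ of order $p$, the only indecomposable projective $FP$-module is the regular one, of dimension $p$. Thus $M_i|_{FP}$ is projective if and only if every Jordan block of $c^n$ on $M_i$ has size $p$. This forces $q=p$ and $r=0$, which gives $k_i=pn=|C|$; the remaining cases $q=0$ (trivial restriction), $1\le q\le p-1$ (blocks of size $<p$, or mixed sizes), and $q\ge p+1$ (incompatible with $k_i\le |C|$) are all ruled out. Therefore each $M_i$ is the regular $FC$-module and $M$ is projective over $FC$.

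The main obstacle is the combinatorial step controlling the Jordan form of $N^n$ on a single Jordan block; once that identification is in hand, the projectivity criteria of Theorem~\ref{ProjectiveCyclicpGroup} at the level of $C$ and at the level of $P$ line up cleanly and force $k_i=|C|$.
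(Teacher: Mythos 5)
Your proof is correct and follows essentially the same approach as the paper: reduce to $M$ indecomposable over $FC$ so that $c$ acts as a single Jordan block $J_k(1)=I+N$, restrict to $P=\langle c^{n}\rangle$ with $n=|C|/p$, and analyze the Jordan chain structure of $N^{n}$ via Theorem~\ref{ProjectiveCyclicpGroup} applied to $P$ in order to force $k=|C|$. The only difference is in bookkeeping: you compute the full Jordan decomposition of $N^{n}$ explicitly ($r$ blocks of size $q+1$ and $n-r$ blocks of size $q$ when $k=qn+r$), whereas the paper singles out one chain and its complement, deduces that both dimensions are divisible by $p$, and finishes with a short divisibility computation.
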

\begin{proof}
	Using that $FC_{FP}$ is free, it follows easily that if $M_{FC}$ is projective, then so is $M_{FP}$.
	
To prove the converse we may assume that $M_{FC}$ is indecomposable and $|C|>p$ and fix a generator $c$ of $C$.
By Theorem~\ref{ProjectiveCyclicpGroup} and the comments afterwards, the matrix expression of the multiplication by $c$ map in a suitable basis $v_1,\dots,v_k$ of  $M_K$ is a Jordan matrix
	$$\rho(c)=J_k(1)=\pmatriz{{cccc} 1 \\ 1 & 1  \\ & \ddots & \ddots \\ & & 1 & 1}\in M_k(F)$$
with $1\le k \le |C|$. Moreover, $M_{FC}$ is projective if and only if $k=|C|$.
	
	Suppose that $M_{FP}$ is projective.
	We want to show that $M_{FC}$ is projective and this is equivalent to showing that $k=|C|$ by the previous paragraph.
	Let $q=\frac{|C|}{p}$. Then $P=\GEN{g^q}$ and $\rho(g^q)=J_k(1)^q$.
	Therefore
	$$g^q v_i =\begin{cases} v_i+v_{i+q}, & \text{if } i+q\le k; \\ v_i, & \text{otherwise}.\end{cases}$$
	As $M_{FP}$ is projective, the action of $P$ on $M$ is non-trivial and therefore $J_k(1)^q\ne I$. Therefore, $k>q$.
	Write $k-q+1=sq+t$ with $s$ and $t$ non-negative integers and $t<q$. 
	Let $I=\{t+iq:i=0,\dots,s\}$, $J=\{1,\dots,k\}\setminus I$, 
	$M_I=\sum_{i\in I} Fv_i$ and $M_J=\sum_{j\in J} Fv_j$. 
	Clearly, $M=M_I\oplus M_J$ and the expression above of $g^q v_i$ implies that $M_I$ and $M_J$ are submodules of $FP$.
	As $M_{FP}$ is projective, so are $M_I$ and $M_J$ and hence the dimension of both is a multiple of the dimension of the unique projective indecomposable $FP$-module.
	As this dimension is $p$ we deduce that $p\mid s+1$ and $p\mid k$. Thus $1\equiv t \mod p$ and hence $|C|\mid (s+1)q=k+1-t\le k$. Thus $k=|C|$ as desired.
\end{proof}

We are ready for the

\begin{proofof} \textbf{Proposition~\ref{OrderPA0}}.
	Suppose that $|g|$ does not divide $|u|$. Then there is a prime integer $p$ and a positive integer $n$ such that $p^n$ divides $|g|$ but $p^n$ does not divides $|u|$.
	Let $R=\Z_{(p)}$ be the localization of $\Z$ at $(p)$ and let $F=R/pR\cong \F_p$.
	Consider the inclusion $\alpha:\GEN{u}\rightarrow V(\Z G)\subseteq V(RG)$ and let $M=R[\alpha]$.
	Let $C=\GEN{(u,g)}$, let $P$ be the Sylow $p$-subgroup of $C$ and let $Q$ be the subgroup of $P$ of order $p$.
	By the assumption on the orders of $u$ and $g$, $Q=\GEN{(1,k)}$ with $\GEN{k}$ the subgroup of order $p$ of $\GEN{g}$.
	Then $_{FQ}(F\otimes_R M) \cong {_{F\GEN{k}}(F\otimes_R M)} \cong {_{F\GEN{k}}FG}= F \GEN{k}^{[G:\GEN{k}]}$, which is free and hence projective.
	Then $_{FP}F\otimes_R M$ is projective, by Lemma~\ref{ProjectiveCyclic} and thus $_{RP} M$ is projective by Theorem~\ref{ProjectiveSubgroup} (applied with $G=P$ and $H=1$). 
	As $[C:P]$ is invertible in $R$, by Theorem~\ref{ProjectiveCoprime}, we deduce that $_{RC}M$ is projective.
	Moreover, $|(u,g)|$ is divisible by $p$ and hence it is not invertible in $R$. Then $\chi((u,g))=0$, by
	Theorem~\ref{ProjectiveCharacter0}.
	Finally, $\varepsilon_g(u)=\frac{\chi((u,g))}{|C_G(g)|}=0$, by \eqref{CharacterRalpha}.
\end{proofof}

Recall that if $g$ is an element of finite order in a group and $p$ is a prime integer then there are unique elements $h,k\in \GEN{g}$ such that $g=hk$ and $h$ is a $p$-element and $k$ is $p$-regular. Then $h$ and $k$ are called the $p$-part and $p'$-parts of $g$, respectively.

Basically the same proof of Proposition~\ref{OrderPA0}, now using Green's Theorem on Zeros of Characters \cite[(19.27)]{CurtisReiner1981}, gives the following:

\begin{proposition}[Hertweck \cite{HertweckOrdersSolvable}]\label{OrderPA02}
	Let $P$ be a normal $p$-subgroup of $G$. Let $u$ be a torsion unit of $V(\Z G)$ such that $|\aug_P(u)|<|u|$ and $g\in G$ such that the order of the
	$p$-part of $g$ is smaller than the order of the $p$-part of $u$. Then $\varepsilon_g(u)=0$.
\end{proposition}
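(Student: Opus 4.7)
The plan is to mirror the setup and the preliminary identification of the order-$p$ subgroup of the Sylow $p$-subgroup of $C=\GEN{(u,g)}$ from the proof of Proposition~\ref{OrderPA0}, but to replace the argument that establishes projectivity of $M$ over $RC$ (culminating in Theorem~\ref{ProjectiveCharacter0}) by a vertex estimate together with Green's Theorem on the zeros of characters (Curtis--Reiner~(19.27)). I will take $R=\Z_{(p)}$, $F=R/pR\cong\F_p$, let $\alpha:\GEN{u}\hookrightarrow V(\Z G)\subseteq V(RG)$ be the inclusion, and form the $R(\GEN{u}\times G)$-module $M=R[\alpha]$, so that by~\eqref{CharacterRalpha} the desired identity $\varepsilon_g(u)=0$ is equivalent to $\chi_M(u,g)=0$, where $\chi_M$ is the character afforded by $M$.

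First I would translate the augmentation hypothesis to the $p$-part of $u$. Writing $u=u_pu_{p'}$ for the commuting product of $p$- and $p'$-parts and applying Lemma~\ref{VPPgroup} to $V(\Z G,P)$ (which rules out nontrivial $p'$-torsion in $\ker\aug_P$), one obtains $|\aug_P(u_p)|<|u_p|=p^a$. Hence $y:=u_p^{p^{a-1}}$ has order $p$ and lies in $V(\Z G,P)$. The second hypothesis $|g_p|<|u_p|$ then identifies, by the same calculation as in Proposition~\ref{OrderPA0}, the unique order-$p$ subgroup of the Sylow $p$-subgroup $P_C$ of $C$ as $Q=\GEN{(y,1)}$.

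The crux, and the point of divergence from Proposition~\ref{OrderPA0}, is a vertex computation. In Proposition~\ref{OrderPA0} the analogous $Q$ was $\GEN{(1,k)}$ with $k\in G$, and left multiplication by $k$ on $RG$ made $M|_{FQ}$ free, which lifted via the ladder $FQ\subseteq FP\subseteq RP\subseteq RC$ to full projectivity of $M$ over $RC$; here left multiplication by $y$ on $RG$ is not free in general (the Jordan structure of $y-1$ is unconstrained), so that ladder collapses. The substitute is to show that every indecomposable summand of $M$, viewed as an $R(\GEN{u}\times G)$-module, has vertex contained up to conjugacy in a $p$-subgroup $D\leq\GEN{u}\times G$ that avoids every $\GEN{u}\times G$-conjugate of the $p$-part $(u_p,g_p)$ of $(u,g)$. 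Green's Theorem then delivers $\chi_M(u,g)=0$ summand by summand. The construction of $D$ would combine two inputs: on the one hand, the freeness of $M$ as a right $R(\{1\}\times P)$-module together with the standard relative-projectivity machinery, which bounds the projection of $D$ to $G$ inside $P$; on the other hand, the nilpotency of $\Aug(RG,P)$ over $F$ (Lemma~\ref{JacobsonAugmentation}) combined with $y-1\in\Aug(RG,P)$, which forces $\GEN{y}$ to act trivially on the graded pieces of the $\Aug(RG,P)$-adic filtration of $M$ and thereby bounds the projection of $D$ to $\GEN{u}$ inside $\ker(\aug_P|_{\GEN{u}})$. The strict inequality $|\aug_P(u_p)|<|u_p|$ is exactly what then prevents $(u_p,g_p)$ from being conjugated into $D$.

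The main obstacle will be making this vertex bound precise. Since freeness of $M$ under restriction does not automatically imply relative projectivity as an $R(\GEN{u}\times G)$-module, I cannot simply invoke Higman's criterion to conclude; a more delicate analysis---built around Green's correspondence and trivial-source modules, together with the filtration by powers of $\Aug(RG,P)$---is required to exhibit $D$ explicitly and verify the non-membership of $(u_p,g_p)$.
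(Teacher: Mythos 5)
Your opening two paragraphs are sound and match what the paper intends: same double-module $M=R[\alpha]$, same reduction to $\chi_M(u,g)=0$, the same translation (via Lemma~\ref{VPPgroup}) of $|\aug_P(u)|<|u|$ into $|\aug_P(u_p)|<|u_p|$, and the correct identification of the order-$p$ subgroup of the Sylow $p$-subgroup of $C=\GEN{(u,g)}$ as $Q=\GEN{(y,1)}$ with $y=u_p^{p^{a-1}}\in V(\Z G,P)$. You also correctly diagnose exactly where the proof of Proposition~\ref{OrderPA0} breaks: $Q$ now acts by \emph{left} multiplication by a unit $y$, so $F\otimes_R M$ is no longer obviously free over $FQ$, and the ladder $FQ\subseteq FP_C\subseteq RP_C\subseteq RC$ collapses. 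And you correctly identify that Curtis--Reiner (19.27) is what must replace (32.15). All of this is what the paper's one-line remark (``basically the same proof, now using Green's Theorem on Zeros of Characters'') is gesturing at.

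However, the third paragraph --- the actual substitute argument --- does not close, and you acknowledge this yourself. Two concrete issues. First, the claim that freeness of $M$ over $R(\{1\}\times P)$ ``bounds the projection of $D$ to $G$ inside $P$'' is not a valid inference: what freeness of a restriction to a \emph{normal} subgroup $N$ buys (via the vertex-restriction lemma for normal subgroups) is that an indecomposable summand with vertex $D$ has $D\cap N=1$, not that the $G$-projection of $D$ lands in $P$. In fact $M$ is free over all of $R(\{1\}\times G)$, which gives the cleaner (and different) statement $D\cap(\{1\}\times G)=1$, i.e.\ $D$ embeds into $\GEN{u}$ via the first projection; this neither implies nor is implied by your stated bound. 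Second, the appeal to the nilpotency of $\Aug(FG,P)$ and the $\Aug(FG,P)$-adic filtration does not obviously control the vertex in the claimed direction: the fact that $y-1$ sends $\Aug(FG,P)^i$ into $\Aug(FG,P)^{i+1}$ (so $\GEN{y}$ acts trivially on the graded pieces) tends to make Jordan blocks \emph{small}, which makes $M|_{FQ}$ less likely to be projective --- it is not a mechanism that caps the projection of $D$ to $\GEN{u}$ inside $\GEN{u}\cap V(\Z G,P)$. In short: you have correctly set up the problem, correctly identified the replacement theorem, and correctly located the hard step, but the hard step --- exhibiting a concrete $p$-subgroup $H\le\GEN{u}\times G$ (or $\le C$) with $M$ relatively $H$-projective and $(u_p,g_p)$ not conjugate into $H$ --- is exactly what is missing, and the two heuristics you offer to produce it do not hold as stated. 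That is the genuine gap.
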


\begin{proposition}[Hertweck \cite{HertweckOrdersSolvable}]\label{OrderPA0-2}
	If $G$ is solvable and $u$ is a torsion element of $V(\Z G)$ then $G$ has an element with the same order as $u$ such that $\varepsilon_g(u)\ne 0$.
\end{proposition}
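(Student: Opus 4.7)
The plan is to induct on $|G|$. The base case $|G|=1$ is trivial. For the inductive step, we use that solvability guarantees a minimal normal subgroup $A$ which is an elementary abelian $p$-group, so that $G/A$ is again solvable of smaller order. Write $|u|=n$, and consider $\ov u\in V(\Z(G/A))$ of order $\ov n$, which necessarily divides $n$. The strategy is to split into two cases according to whether $\ov n = n$ or $\ov n<n$, and in each case to pull back the element produced by the induction hypothesis on $G/A$ using the two key tools already established: Proposition~\ref{OrderPA0} (which forces $|g|\mid n$ whenever $\varepsilon_g(u)\neq 0$) and Proposition~\ref{OrderPA02} (which controls the $p$-parts).

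First I would handle the easy case $\ov n = n$. By the induction hypothesis there is $\ov{g}\in G/A$ with $|\ov g|=n$ and $\varepsilon_{\ov g}(\ov u)\neq 0$. Writing the projection $\pi:\Z G\to \Z(G/A)$ as a sum over conjugacy classes, one checks that
\[
\varepsilon_{\ov g}(\ov u)=\sum_{C\,:\,\pi(C)\subseteq \ov g^{\,G/A}}\varepsilon_C(u),
\]
so some conjugacy class $C$ of $G$ with $\pi(C)\subseteq\ov g^{\,G/A}$ has $\varepsilon_C(u)\neq 0$. Any $y\in C$ then satisfies $|\ov y|=n$ (hence $|y|\ge n$) and $|y|\mid n$ by Proposition~\ref{OrderPA0}, so $|y|=n$, as required.

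The harder case is $\ov n<n$. Here $u^{\ov n}\in V(\Z G,A)$ is a non-trivial torsion unit, so by Lemma~\ref{VPPgroup} the order $n/\ov n$ is a power of $p$. Writing $n=p^a m$ and $\ov n=p^b m$ with $p\nmid m$, we then have $a>b$. Now Proposition~\ref{OrderPA02}, applied with $P=A$ (so that $|\aug_A(u)|=\ov n<n=|u|$), tells us that $\varepsilon_g(u)=0$ whenever the $p$-part of $g$ has order strictly less than $p^a$; combined with Proposition~\ref{OrderPA0}, every $g$ with $\varepsilon_g(u)\neq 0$ has $p$-part of order exactly $p^a$ and total order dividing $n$. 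Apply the induction hypothesis to $G/A$ and $\ov u$: there is $\ov{g_0}\in G/A$ with $|\ov{g_0}|=\ov n=p^b m$ and $\varepsilon_{\ov{g_0}}(\ov u)\neq 0$. Lifting as in Case~1, pick $g_0\in G$ in a conjugacy class $C$ with $\pi(C)\subseteq\ov{g_0}^{\,G/A}$ and $\varepsilon_C(u)\neq 0$. Then $|g_0|$ is simultaneously a multiple of $p^b m$ and a divisor of $p^a m$, and has $p$-part exactly $p^a$; the $p'$-part of $g_0$ is therefore coprime to $p$, divides $m$, and is a multiple of $m$ (since $a\ge b$), hence equals $m$. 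Thus $|g_0|=p^a m=n$ and $\varepsilon_{g_0}(u)\neq 0$, finishing the induction.

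The main obstacle, and the one that forces the whole argument to rely on solvability, is Case~2: one needs both Hertweck's augmentation vanishing results at once, and one needs $A$ to be a $p$-group in order to identify the prime $p$ responsible for the drop from $n$ to $\ov n$ via Lemma~\ref{VPPgroup}. Once the prime $p$ is pinned down, the arithmetic juggling between the $p$- and $p'$-parts is routine.
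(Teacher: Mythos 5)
Your proof is correct and follows essentially the same route as the paper: induct on $|G|$ by reducing modulo a non-trivial normal $p$-subgroup, lift a conjugacy class from the quotient on which a partial augmentation of $u$ is non-zero, and then combine Proposition~\ref{OrderPA0} (which bounds $|g|$ from above) with Proposition~\ref{OrderPA02} (which controls the $p$-part from below) to force $|g|=|u|$. The paper organizes the endgame slightly differently — it first proves, for any torsion $v\in V(\Z G)$, that $v$ and $\overline v$ have $p'$-parts of the same order (via $v^{|\overline v|}\in V(\Z G,P)$ and Lemma~\ref{VPPgroup}), and only then splits into the cases $|u|=|\overline u|$ and $|u|>|\overline u|$ — whereas you split at the outset and do the $p$/$p'$ bookkeeping by explicit divisibility; but the underlying ideas and the lemmas invoked are identical.
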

\begin{proof}
Let $G$ be a solvable group and let $u$ be an element of order $n$ in $V(\Z G)$.
We argue by induction on the order of $G$. The result is clear if $G=1$.
So we suppose that $G\ne 1$ and the proposition holds for solvable groups of smaller order.
Since $G$ is solvable, it has a normal $p$-subgroup $P$ of $G$.
Use the bar reduction for reduction modulo $P$, i.e. $\overline{x}=\aug_P(x)$ for $x\in \C G$.

If $v$ is a torsion element of $V(\Z G)$ then $v^{|\overline{v}|}\in V(\Z G,P)$. Thus $v^{|\overline{v}|}$ is a $p$-element, by Lemma~\ref{VPPgroup}.
This shows that the $p'$-parts of $v$ and $\overline{v}$ have the same order.

By induction, there is $x\in G$ such that $|\overline{x}|=|\overline{u}|$ and  $\varepsilon_{\overline{x}}(\overline{u})\ne 0$.
The first, combined with the previous paragraph, implies that the $p'$-parts of $x$ and $u$ are equal have the same order.
Observe that $\varepsilon_{\overline{x}}(\overline{u})$ is the sum of the partial augmentations of the form $\varepsilon_{g}(u)$ with $\overline{g}$ conjugate
to $\overline{x}$. In particular, $\varepsilon_g(u) \ne 0$ for some $g\in G$ such that $\overline{g}$ is conjugate to $\overline{x}$ in $\overline{G}$.
Thus we may assume that $\varepsilon_x(u)\ne 0$. Then $|x|\mid |u|$, by Proposition~\ref{OrderPA0}.
If $|u|=|\overline{u}|$ then $|x|\mid |u| = |\overline{u}| = |\overline{x}| \mid |x|$ and hence $|x|=|u|$, as desired. Otherwise, by Proposition~\ref{OrderPA02} the $p$-parts of
$|x|$ and $|u|$ are equal. Thus $|x|=|u|$ and we are done.
\end{proof}

We finish with the result which justifies the title of this section.

\begin{theorem}\cite{Hertweck2008}
	The Spectrum Problem holds for solvable groups.
\end{theorem}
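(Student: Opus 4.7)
The plan is to observe that the Spectrum Problem for solvable groups is essentially an immediate corollary of Proposition~\ref{OrderPA0-2}, so the ``proof'' is really just a matter of combining the two containments of spectra.

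First I would dispense with the trivial direction. Since $G \subseteq V(\Z G)$, every order realized by an element of $G$ is also realized by a torsion element of $V(\Z G)$; so the spectrum of $G$ is contained in the spectrum of $V(\Z G)$, regardless of solvability.

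For the nontrivial direction, I would take an arbitrary torsion element $u \in V(\Z G)$ of order $n$. The key input is Proposition~\ref{OrderPA0-2}: since $G$ is solvable, it guarantees some $g \in G$ with $|g| = |u| = n$ and $\varepsilon_g(u) \neq 0$. In particular, $G$ contains an element of order $n$, so the spectrum of $V(\Z G)$ is contained in the spectrum of $G$.

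Combining both directions gives equality of spectra, which is the statement of the Spectrum Problem. The heavy lifting was already done in Proposition~\ref{OrderPA0-2}, which in turn relied on reducing modulo a normal $p$-subgroup, on Lemma~\ref{VPPgroup} to control how the order of the $p'$-part behaves under this reduction, and on Propositions~\ref{OrderPA0} and \ref{OrderPA02} to control the $p$-part; there is no further obstacle to overcome at this stage, as the induction and the $p$-part/$p'$-part bookkeeping were absorbed in the proof of Proposition~\ref{OrderPA0-2}.
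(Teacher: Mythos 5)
Your proposal is correct and matches the approach the paper intends: the paper states the theorem as an immediate consequence of the preceding Proposition~\ref{OrderPA0-2}, leaving the final step (combining the trivial inclusion $G\subseteq V(\Z G)$ with that proposition) to the reader. You have supplied exactly that missing step, so your argument is the one the section was built to deliver.
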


\bigskip

I would like to thank Andreas Bächle and Leo Margolis for reading a preliminary version of these notes and providing many great suggestions. I also would like to thank to Alexey Gordienko and Alonso Albaladejo for several remarks which helps to improve this notes.

\bibliographystyle{amsalpha}
\bibliography{ReferencesMSC}

\end{document}